\newtheorem{theorem}{Theorem}[section]
\newtheorem{lemma}[theorem]{Lemma}
\newtheorem{define}[theorem]{Definition}
\newtheorem{cor}[theorem]{Corollary}
\newtheorem{prop}[theorem]{Proposition}
\newtheorem{remark}[theorem]{Remark}
\newcommand{\G}{\mathbb G}
\newcommand{\n}{\mathcal N}
\newcommand{\I}{\mathcal I}
\newcommand{\F}{\mathbb F}
\newcommand{\N}{\mathbb N}
\newcommand{\Z}{\mathbb Z}
\newcommand{\C}{\mathcal C}
\newcommand{\GL}{\mathrm{GL}}
\newcommand{\PGL}{\mathrm{PGL}}
\newcommand{\ord}{\mathrm{ord}}
\newcommand{\doublespace}
\begin{document}

\begin{frontmatter}

\title{Invariant theory of a special group action on irreducible polynomials over finite fields}

\author[UFMG]{Lucas Reis}
\ead{lucasreismat@gmail.com}
\fntext[UFMG]{Permanent address: Departamento de Matem\'{a}tica, Universidade Federal de Minas Gerais, Belo Horizonte, MG, 30123-970, Brazil.}
\address{School of Mathematics and Statistics, Carleton University, 1125 Colonel By Drive, Ottawa ON (Canada), K1S 5B6}
\journal{Elsevier}
\begin{abstract}
In the past few years, an action of $\PGL_2(\F_q)$ on the set of irreducible polynomials in $\F_q[x]$ has been introduced and many questions have been discussed, such as the characterization and number of invariant elements. In this paper, we analyze some recent works on this action and provide full generalizations of them, yielding final theoretical results on the characterization and number of invariant elements.  
\end{abstract}

\begin{keyword}
Group action; Irreducible polynomials over finite fields; Invariant elements; Rational transformations;
\MSC[2010]{11T06 \sep 11T55\sep 12E20}
\end{keyword}
\end{frontmatter}





\section{Introduction}
Let $\F_q$ be the finite field with $q$ elements, where $q$ is a power of a prime $p$. Recall that, for a polynomial $f=\sum_{i=0}^{n}a_ix^i\in \F_q[x]$ of degree $n$, its \emph{reciprocal} is defined as $f^{*}=x^nf(1/x)$. A polynomial $f$ is \emph{self-reciprocal} if it coincides with its reciprocal, i.e., $f=f^{*}=x^nf(1/x)$. For instance, $f=x+1$ is self-reciprocal. The irreducible polynomials that are self-reciprocal have been extensively studied in many aspects, such as their number, construction and characterization. For more details, see~\cite{MG90}. In this context, the following transformations on univariate polynomials over $\F_q$ can be viewed as generalizations of the reciprocal of a polynomial.

\begin{define}\label{def:mobius-action}
For $A\in \GL_2(\F_q)$ with $A=\left(\begin{matrix}
a&b\\
c&d
\end{matrix}\right)$ and $f\in \F_q[x]$ a polynomial of degree $k$, set
$$A\circ f=(bx+d)^kf\left(\frac{ax+c}{bx+d}\right).$$
Additionally, if $[A]$ denotes the class of $A$ in the group $\PGL_2(\F_q)$, for $f\in \F_q[x]$ a nonzero polynomial, set
$$[A]\circ f=c_{f, A}\cdot (A\circ f),$$
where $c_{f, A}\in \F_q^*$ is the element of $\F_q$ such that $c_{f, A}\cdot (A\circ f)$ is monic.
\end{define}
\noindent For $E=\left(\begin{matrix}0&1\\
1&0
\end{matrix}\right)$, $E\circ f$ is the reciprocal of the polynomial $f$ and $[E]\circ f$ is the \emph{monic reciprocal} of $f$. It is straightforward to check that, if $B=\lambda A$ with $\lambda\in \F_q^*$ (i.e., $[A]=[B]$), then $A\circ f$ and $B\circ f$ are equal (up to the scalar $c=\lambda^k\ne 0$) and so $[A]\circ f$ is well defined. In~\cite{ST12}, the authors show interesting properties of the compositions $A\circ f$ and $[A]\circ f$ in the case when $f$ is an irreducible polynomial of degree at least two. For each positive integer $k$, let $\I_k$ be the set of monic irreducible polynomials of degree $k$ over $\F_q$. From Lemma 2.2 of~\cite{ST12}, we can derive the following properties.

\begin{lemma}\label{lem:aux-mobius-action}
For any $A, B\in \GL_2(\F_q)$ and $f\in \I_k$ with $k\ge 2$, the following hold.
\begin{enumerate}[(i)]
\item $[A]\circ f$ is in $\I_k$,
\item $[A]\circ ([B]\circ f)=[AB]\circ f$,
\item if $[I]$ is the identity of $\PGL_2(\F_q)$, $[I]\circ f=f$.
\end{enumerate}
\end{lemma}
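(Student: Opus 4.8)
The plan is to prove (i) by analysing the roots of $A\circ f$ over $\overline{\F_q}$, and then to deduce (ii) and (iii) by direct computation; essentially all of the content sits in (i).

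For (i), I would fix $A=\begin{pmatrix}a&b\\c&d\end{pmatrix}\in\GL_2(\F_q)$ and $f\in\I_k$ with $k\ge2$, let $\alpha$ be a root of $f$ (so $\F_q(\alpha)=\F_{q^k}$), and factor $f(y)=a_k\prod_{j=1}^{k}(y-\alpha_j)$ over $\overline{\F_q}$ with $\alpha_1=\alpha$. Substituting into the definition gives
\[
A\circ f=(bx+d)^k f\!\left(\tfrac{ax+c}{bx+d}\right)=a_k\prod_{j=1}^{k}\bigl((a-\alpha_j b)x+(c-\alpha_j d)\bigr).
\]
Because $f$ is irreducible of degree $\ge2$ it has no root in $\F_q$, so each $\alpha_j\notin\F_q$ and hence $a-\alpha_j b\ne0$ (if $b\ne0$ this would force $\alpha_j=a/b\in\F_q$; if $b=0$ then $a\ne0$ since $\det A\ne0$). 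Thus $A\circ f$ has degree exactly $k$, and $\beta:=\frac{\alpha d-c}{a-\alpha b}$ is the root of its $j=1$ factor. Now $\beta\in\F_q(\alpha)$ is clear, and a short computation gives $\frac{a\beta+c}{b\beta+d}=\alpha$ together with $b\beta+d=\frac{\det A}{a-\alpha b}\ne0$, so $\alpha\in\F_q(\beta)$ as well; hence $\F_q(\beta)=\F_q(\alpha)=\F_{q^k}$ and the minimal polynomial $m_\beta$ of $\beta$ over $\F_q$ has degree $k$. Since $\beta$ is a root of $A\circ f$, $m_\beta$ divides $A\circ f$, and as $\deg(A\circ f)=k$ the monic normalization $[A]\circ f$ equals $m_\beta$, which lies in $\I_k$.

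For (ii), I would first establish the identity $A\circ(B\circ f)=(AB)\circ f$ for the (unnormalized) $\GL_2$-action. By the degree computation above, $g:=B\circ f$ has degree $k$, so the definition of $A\circ g$ is applied with exponent $k$; substituting the formula for $g$ into that for $A\circ g$, the outer factor $(bx+d)^k$ coming from $A$ absorbs the $k$-th power of the inner linear form coming from $B$ into a single $k$-th power of a linear form, while the two fractional linear substitutions compose into one. Matching the resulting coefficients against the entries of the product matrix $AB$ shows both ingredients coincide with those of $(AB)\circ f$. The projective statement is then formal: since $A\circ(\lambda h)=\lambda(A\circ h)$ for $\lambda\in\F_q^{*}$, both $[A]\circ([B]\circ f)$ and $[AB]\circ f$ are the monic scalar multiple of $(AB)\circ f$, hence equal. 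Finally, (iii) is immediate: for $I=\begin{pmatrix}1&0\\0&1\end{pmatrix}$ the definition yields $I\circ f=1^k\cdot f(x)=f$, which is already monic, so $[I]\circ f=f$.

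The part I expect to be the main obstacle is the bookkeeping in (i) and the first half of (ii): verifying that $A\circ f$ never drops degree --- which is exactly where the hypothesis that $f$ is irreducible of degree $\ge2$ enters, through the absence of roots in $\F_q$ --- and checking that the relevant fractional linear substitutions really do invert and compose according to their matrices. The $\PGL_2$-normalization in (ii) and all of (iii) are routine.
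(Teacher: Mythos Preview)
Your argument is correct. The paper does not actually prove this lemma: it simply records it as a consequence of Lemma~2.2 of~\cite{ST12} and gives no independent argument. Your write-up supplies exactly the standard self-contained proof one would expect --- factoring $A\circ f$ into linear factors over $\overline{\F_q}$, checking that none of them degenerates because $f$ has no roots in $\F_q$, and then using the inverse M\"obius substitution to show $\F_q(\beta)=\F_q(\alpha)$ --- so there is nothing to compare beyond noting that you have filled in what the paper outsourced.

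One minor remark on presentation: in your verification of (ii), the ``matching coefficients'' step is really just the observation that the composite of the two fractional linear maps $x\mapsto\frac{a'x+c'}{b'x+d'}$ and $x\mapsto\frac{ax+c}{bx+d}$ is the fractional linear map attached to the product matrix $AB$, together with the fact that the outer factor $(bx+d)^k$ multiplies against the $k$-th power of the inner linear form to give exactly the linear form attached to $AB$. It may be worth writing this out once explicitly rather than leaving it as ``matching coefficients,'' since the order of composition and the placement of entries (note the paper's convention uses $\frac{ax+c}{bx+d}$ rather than $\frac{ax+b}{cx+d}$) are easy to get wrong. But the claim itself is fine.
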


In particular, the group $\PGL_2(\F_q)$ \emph{acts} on the sets $\I_k$ with $k\ge 2$, via the compositions $[A]\circ f$. It is then natural to ask about the fixed points. 
\begin{define}
\begin{enumerate}
\item For $f\in \I_k$ with $k\ge 2$ and $[A]\in \PGL_2(\F_q)$, $f$ is $[A]$-invariant if $[A]\circ f=f$.
\item For $[A]\in \PGL_2(\F_q)$ and a positive integer $n\ge 2$, $\C_{A}(n):=\{f\in \I_n\,|\, [A]\circ f=f\}$ is the set of $[A]$-invariants of degree $n$.
\item For $[A]\in \PGL_2(\F_q)$ and a positive integer $n\ge 2$, $\n_{A}(n):=|\C_{A}(n)|$ is the number of $[A]$-invariants of degree $n$.
\end{enumerate}
\end{define}
In~\cite{ST12}, the authors discuss many questions such on the characterization and number of $[A]$-invariants. In particular, they obtain the following criteria for the fixed elements.

\begin{theorem}[see Theorem 4.2 of~\cite{ST12}]\label{thm:ST12-4.2} Let $A=\left(\begin{matrix}
a&b\\
c&d
\end{matrix}\right)$ be an element of $\GL_2(\F_q)$. For each nonnegative integer $r$, set $F_{A, r}(x)=bx^{q^r+1}-ax^{q^r}+dx-c$. For $f\in \I_k$ with $k\ge 2$, the following are equivalent.
\begin{enumerate}[(i)]
\item $[A]\circ f=f$, i.e., $f$ is $[A]$-invariant,
\item $f$ divides $F_{A, r}$ for some $r\ge 0$.
\end{enumerate}
\end{theorem}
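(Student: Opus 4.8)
\emph{Proof plan.} The plan is to translate the polynomial identity $[A]\circ f=f$ into a condition on a single root of $f$, and then clear denominators. Fix a root $\alpha$ of $f$ in $\overline{\F_q}$. Since $k=\deg f\ge 2$ and $f$ is irreducible, $\alpha\notin\F_q$ and the roots of $f$ are precisely the $k$ distinct elements $\alpha,\alpha^q,\dots,\alpha^{q^{k-1}}$, all simple. Let $\sigma$ be the fractional linear map $\sigma(z)=\dfrac{dz-c}{a-bz}$ built from the adjugate of $A$. Using $\det A=ad-bc\ne 0$ together with $\alpha\notin\F_q$, one checks that $a-b\alpha^{q^i}\ne 0$ for every $i$ (if $b\ne 0$ this would force $\alpha^{q^i}=a/b\in\F_q$; if $b=0$ then $a\ne 0$), so $\sigma$ is defined and takes finite values at every root of $f$; moreover $\sigma$ is a bijection of $\mathbb P^1(\overline{\F_q})$ commuting with the $q$-power Frobenius, since its coefficients lie in $\F_q$.

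The first key step is to identify the root set of $[A]\circ f$ as $\{\sigma(\alpha),\sigma(\alpha^q),\dots,\sigma(\alpha^{q^{k-1}})\}$. A direct computation from Definition~\ref{def:mobius-action} gives $\dfrac{a\,\sigma(\alpha)+c}{b\,\sigma(\alpha)+d}=\alpha$ and $b\,\sigma(\alpha)+d=\dfrac{\det A}{a-b\alpha}\ne 0$, hence $A\circ f$—and therefore $[A]\circ f$, which is a nonzero scalar multiple of it—vanishes at $\sigma(\alpha)$; applying Frobenius shows it vanishes at each $\sigma(\alpha^{q^i})$. These $k$ elements are pairwise distinct and finite (injectivity of $\sigma$ on $\mathbb P^1$, and no root of $f$ equals the pole $a/b$), while $\deg([A]\circ f)=k$ because $[A]\circ f\in\I_k$ by Lemma~\ref{lem:aux-mobius-action}(i). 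Hence they are all of its roots. Since $f$ and $[A]\circ f$ are both monic of degree $k$ with simple roots, it follows that $[A]\circ f=f$ if and only if $\{\sigma(\alpha^{q^i})\}_{i}=\{\alpha^{q^i}\}_{i}$.

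Next I reduce this set equality to the existence of a single exponent. If the two sets agree, then in particular $\sigma(\alpha)=\alpha^{q^r}$ for some $r\ge 0$. Conversely, if $\sigma(\alpha)=\alpha^{q^r}$, Frobenius-equivariance gives $\sigma(\alpha^{q^i})=\alpha^{q^{r+i}}$ for all $i$, so $\{\sigma(\alpha^{q^i})\}_i$ is just a cyclic shift of $\{\alpha^{q^i}\}_i$ and hence equals it. Thus $[A]\circ f=f$ if and only if $\sigma(\alpha)=\alpha^{q^r}$ for some $r\ge 0$. Finally, clearing the nonzero denominator in $\dfrac{d\alpha-c}{a-b\alpha}=\alpha^{q^r}$ yields $b\alpha^{q^r+1}-a\alpha^{q^r}+d\alpha-c=0$, i.e. $F_{A,r}(\alpha)=0$; since $f$ is the minimal polynomial of $\alpha$ over $\F_q$ and $F_{A,r}\in\F_q[x]$, this is equivalent to $f\mid F_{A,r}$ (the degenerate case $F_{A,r}\equiv 0$, which happens only for $r=0$ and $[A]=[I]$, is consistent with this). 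This establishes the equivalence of (i) and (ii).

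The step I expect to require the most care is pinning down the root set of $[A]\circ f$: one must ensure that no root of $f$ is sent to $\infty$ by the associated transformation, so that the composed polynomial does not drop in degree and the normalization to a monic polynomial behaves as expected. This is exactly where the hypothesis $k\ge 2$ (forcing $\alpha\notin\F_q$) and Lemma~\ref{lem:aux-mobius-action}(i) (degree exactly $k$) enter; once the roots are pinned down, the rest is routine bookkeeping with the Frobenius orbit of $\alpha$.
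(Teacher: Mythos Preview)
The paper does not supply its own proof of this statement: it is quoted verbatim from \cite{ST12} (Theorem~4.2 there) and used as background, so there is nothing in the present paper to compare your argument against.

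That said, your argument is correct and is essentially the standard one. You correctly identify that the roots of $[A]\circ f$ are the images of the roots of $f$ under the inverse M\"obius map $\sigma(z)=(dz-c)/(a-bz)$, use $k\ge 2$ to ensure no root of $f$ equals the pole $a/b\in\F_q$, and then reduce the set equality $\{\sigma(\alpha^{q^i})\}=\{\alpha^{q^i}\}$ to a single congruence $\sigma(\alpha)=\alpha^{q^r}$ via Frobenius-equivariance of $\sigma$. Clearing denominators gives $F_{A,r}(\alpha)=0$, and minimality of $f$ over $\F_q$ finishes. Your handling of the degenerate case $F_{A,r}\equiv 0$ (which forces $r=0$ and $[A]=[I]$) is also correct.
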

Using this criterion, they show that there are infinitely many $[A]$-invariants in $\F_q[x]$. In addition, looking at the possible degrees of $[A]$-invariants, they obtain the following result.

\begin{theorem}[see Theorem 3.3 of~\cite{ST12}]\label{thm:ST12-3.3}
Let $H$ be a subgroup of $\PGL_2(\F_q)$ of order $D\ge 2$. Assume that $f\in \I_n$ is invariant by $H$, i.e., $[A]\circ f=f$ for every $[A]\in H$. Then $n=2$ or $n$ is divisible by $D$.
\end{theorem}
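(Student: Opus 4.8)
The plan is to reduce the statement to an elementary fact about permutations of the set of roots of $f$. Since $f\in\I_n$ is irreducible over the finite (hence perfect) field $\F_q$, it is separable, so it has $n$ pairwise distinct roots in $\F_{q^n}$; fixing one root $\alpha$, this set is $S=\{\alpha,\alpha^q,\dots,\alpha^{q^{n-1}}\}$, a single orbit of the Frobenius map $\phi\colon x\mapsto x^q$, on which $\phi$ acts as an $n$-cycle. Given $A=\left(\begin{matrix}a&b\\c&d\end{matrix}\right)\in\GL_2(\F_q)$, I would introduce the linear fractional map $\mu_A\colon z\mapsto \tfrac{dz-c}{a-bz}$; its coefficients lie in $\F_q$, and a direct expansion of $A\circ f$ shows that the roots of $A\circ f$ are exactly the images $\mu_A(\beta)$ with $\beta\in S$ (here $\mu_A$ has no pole on $S$, since each element of $S$ has degree $n\ge 2$ over $\F_q$ while any pole of $\mu_A$ lies in $\F_q$). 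Since $[A]\circ f$ and $f$ are both monic of degree $n$ (Lemma~\ref{lem:aux-mobius-action}(i)), the equation $[A]\circ f=f$ is equivalent to $\mu_A(S)=S$; one can also read off directly from Theorem~\ref{thm:ST12-4.2} that $\mu_A(\alpha)$ is an $\F_q$-conjugate of $\alpha$. Finally, $[A]\mapsto\mu_A$ is an injective group homomorphism $\PGL_2(\F_q)\to\PGL_2(\F_q)$ (a one-line matrix computation, compatible with Lemma~\ref{lem:aux-mobius-action}(ii)), since $\mu_A=\mathrm{id}$ forces $A$ to be scalar.

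The crux of the argument is the following: because $\mu_A$ has coefficients in $\F_q$, it commutes with $\phi$, so the permutation $\mu_A|_S$ lies in the centralizer of the $n$-cycle $\phi|_S$ inside $\mathrm{Sym}(S)$ — and that centralizer is exactly the cyclic group $\an{\phi|_S}$ of order $n$. Hence for each $[A]\in H$ there is a unique $r_A\in\Z/n\Z$ with $\mu_A|_S=(\phi|_S)^{r_A}$, and $[A]\mapsto r_A$ defines a group homomorphism $\psi\colon H\to\Z/n\Z$.

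It then remains to check that $\psi$ is injective when $n\ge 3$. Indeed, if $\psi([A])=0$ then $\mu_A$ fixes every point of $S$, hence has at least $n\ge 3$ distinct fixed points; but a linear fractional transformation over a field with three or more fixed points is the identity (the fixed-point condition is a polynomial of degree at most $2$), so $\mu_A=\mathrm{id}$ and therefore $[A]=[I]$. Thus $\psi$ embeds $H$ into $\Z/n\Z$, and so $D=|H|$ divides $n$. This yields the theorem; the case $n=2$ is exactly the one left open, and necessarily so, since then $\phi|_S$ is a transposition whose centralizer in $\mathrm{Sym}(S)$ is everything and the injectivity argument collapses.

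I expect essentially no conceptual obstacle here: the labour is the bookkeeping in the dictionary between $[A]\circ f$ and the map $\mu_A$ (the explicit roots of $A\circ f$, the preservation of degree, and the correct order of composition in $[A]\mapsto\mu_A$). The single load-bearing idea is the commutation of $\mu_A$ with Frobenius together with the description of the centralizer of an $n$-cycle; once those are in hand, the divisibility $D\mid n$ and the special role of $n=2$ both fall out immediately.
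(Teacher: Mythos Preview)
The paper does not give its own proof of this statement; it is quoted from \cite{ST12} as background. Your argument is correct and is, in fact, the natural proof. The key step---that $[A]\circ f=f$ forces $\mu_A|_S$ to be a power of the Frobenius $n$-cycle on the root set $S$---is exactly the content of Theorem~\ref{thm:ST12-4.2} once one unwinds it: $f\mid F_{A,r}$ says precisely that $\alpha^{q^r}=\mu_A(\alpha)$ for every root $\alpha$ of $f$. Your packaging via the centralizer of an $n$-cycle in $\mathrm{Sym}(S)$ makes the divisibility $D\mid n$ transparent, and the three-fixed-point rigidity of M\"obius maps cleanly identifies the kernel of $\psi$ and isolates $n=2$ as the only possible exception. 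The bookkeeping you flag (that $\mu_A$ corresponds to the M\"obius map of $(A^T)^{-1}$, so $[A]\mapsto\mu_A$ is a genuine homomorphism rather than an anti-homomorphism) is the only place to be careful, and you have it right.
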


In particular, any $[A]$-invariant is either quadratic or has degree divisible by the order $D$ of $[A]$. They also obtain the asymptotic growth of the number of $[A]$-invariants of a fixed degree.

\begin{theorem}[see Theorem 5.3 of~\cite{ST12}]\label{thm:ST12-5.3} If $[A]\in \PGL_2(\F_q)$ has order $D\ge 2$, then  
$$\n_{A}(Dm)\approx \frac{\varphi(D)}{Dm}q^{m},$$
where $\varphi$ is the Euler Phi Function. In other words, $\lim\limits_{m\to \infty}\frac{\n_A(Dm)\cdot Dm}{q^m\varphi(D)}=1.$
\end{theorem}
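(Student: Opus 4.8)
The plan is to translate, via Theorem~\ref{thm:ST12-4.2}, the count $\n_A(Dm)$ into a count of roots of prescribed degree of the polynomials $F_{A,r}$, and then to evaluate the latter by counting points on (possibly twisted) projective lines over finite fields. Fix a lift $A=\left(\begin{smallmatrix}a&b\\ c&d\end{smallmatrix}\right)\in\GL_2(\F_q)$ of $[A]$, and let $\sigma_A$ denote the linear fractional map $\beta\mapsto\frac{d\beta-c}{-b\beta+a}$ of $\mathbb P^1(\overline{\F_q})$; it commutes with the $q$-power Frobenius, and $[\sigma_A]$ is conjugate to $[A]$ in $\PGL_2(\F_q)$ — e.g.\ via $W=\left(\begin{smallmatrix}0&1\\ -1&0\end{smallmatrix}\right)$ — hence has order $D$. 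From the shape of $F_{A,r}$ one reads off that a monic irreducible $f$ of degree $\ge 2$ with a root $\alpha$ satisfies $f\mid F_{A,r}$ exactly when $\sigma_A(\alpha)=\alpha^{q^{r}}$; combined with Theorem~\ref{thm:ST12-4.2}, this says $f\in\C_A(Dm)$ iff $\sigma_A(\alpha)=\alpha^{q^{r}}$ for some $r\ge 0$.

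First I would pin down which $F_{A,r}$ matter for a given $f\in\C_A(Dm)$, assuming $Dm\ge 3$. Fixing a root $\alpha$ and reducing $r$ modulo $\deg f=Dm$ gives $\sigma_A(\alpha)=\alpha^{q^{r_0}}$ for a unique $r_0\in\{0,\dots,Dm-1\}$; since $\sigma_A^k(\alpha)=\alpha^{q^{kr_0}}$ and $[\sigma_A]^D=[I]$, we get $\alpha^{q^{Dr_0}}=\alpha$, so $m\mid r_0$, say $r_0=tm$. If $\gcd(t,D)=e>1$, or $t=0$, then $\sigma_A^{D/e}$ (resp.\ $\sigma_A$ itself) fixes $\alpha$ and hence — commuting with Frobenius — fixes all $Dm\ge 3$ conjugates of $\alpha$, forcing it to be the identity of $\PGL_2$, which is absurd since its order is $e>1$ (resp.\ $D\ge 2$); here I use that a nonidentity element of $\PGL_2(\overline{\F_q})$ fixes at most two points of $\mathbb P^1$. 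So $r_0=tm$ with $1\le t\le D-1$ and $\gcd(t,D)=1$, and $t$ is independent of the root. Conversely, any root of $F_{A,tm}$ of degree exactly $Dm$ has minimal polynomial in $\C_A(Dm)$, because $\sigma_A$ permutes its roots. As $F_{A,s}$ is squarefree for $s\ge 1$, I would obtain (for $Dm\ge 3$)
$$\n_A(Dm)=\frac{1}{Dm}\sum_{\substack{1\le t\le D-1\\ \gcd(t,D)=1}}R_t,\qquad R_t:=\#\{\alpha\in\overline{\F_q}: \sigma_A(\alpha)=\alpha^{q^{tm}},\ [\F_q(\alpha):\F_q]=Dm\}.$$

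The crux is to show $R_t=q^m+O_D\!\big(\tau(Dm)\,q^{m/2}\big)$ for every admissible $t$ ($\tau$ the number-of-divisors function). For a divisor $e$ of $Dm$, let $N(e)$ count the $\alpha\in\mathbb P^1(\F_{q^e})$ with $\sigma_A(\alpha)=\alpha^{q^{tm}}$; these are the fixed points on $\mathbb P^1(\F_{q^e})$ of the semilinear map $\Theta_e:\,x\mapsto\sigma_A^{-1}(x^{q^{tm}})$. If $D\nmid e/\gcd(e,tm)$, then iterating the defining relation up to the exponent $(e/\gcd(e,tm))\,tm\equiv 0\pmod e$ exhibits every fixed point of $\Theta_e$ as a fixed point of a nonidentity power of $\sigma_A$, so $N(e)=O(1)$. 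If $D\mid e/\gcd(e,tm)$, set $g=\gcd(e,tm)$: then sending the canonical generator of $\mathrm{Gal}(\F_{q^e}/\F_{q^g})$ to $\sigma_A^{-1}$ is a $1$-cocycle (the cocycle identity reduces to $\sigma_A^{e/g}=\mathrm{id}$), so the fixed-point locus is the set of $\F_{q^g}$-points of a twist $X$ of $\mathbb P^1$; since a smooth genus-$0$ curve over a finite field has a rational point, $X\cong\mathbb P^1_{\F_{q^g}}$ and $N(e)=q^{g}+O(1)$. (One can instead compute $N(e)$ directly by splitting into the split-torus, non-split-torus and unipotent conjugacy types of $[A]$; I expect the non-split case, where the fixed points of $\sigma_A$ lie in $\mathbb P^1(\F_{q^2})\setminus\mathbb P^1(\F_q)$, to be the fiddliest.) In particular $\gcd(Dm,tm)=m$ gives $N(Dm)=q^m+O(1)$, while for a proper divisor $e\mid Dm$ with $D\mid e/\gcd(e,tm)$ the chain $D\gcd(e,tm)\mid e\mid Dm$ forces $\gcd(e,tm)$ to be a proper divisor of $m$, so $N(e)=O(q^{m/2})$.

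Finally, since the number of roots of $F_{A,tm}$ lying in $\F_{q^e}$ equals $N(e)$ up to a bounded additive term, Möbius inversion over the divisors of $Dm$ gives $R_t=\sum_{e\mid Dm}\mu(Dm/e)\,N(e)=N(Dm)+O\!\big(\tau(Dm)q^{m/2}\big)=q^m+O\!\big(\tau(Dm)q^{m/2}\big)$. Summing over the $\varphi(D)$ admissible $t$ and dividing by $Dm$ yields $\n_A(Dm)=\tfrac{\varphi(D)}{Dm}q^m\big(1+O(\tau(Dm)q^{-m/2})\big)$; since $\tau(Dm)q^{-m/2}\to 0$, letting $m\to\infty$ gives $\lim_{m\to\infty}\frac{\n_A(Dm)\,Dm}{q^m\varphi(D)}=1$. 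I expect the two delicate points to be the reduction step — getting the residue class $r\equiv tm\pmod{Dm}$ exactly right hinges on the "at most two fixed points" fact and on handling the borderline degree $Dm=2$ separately (it simply does not occur in the limit) — and, above all, the uniform evaluation of $N(e)$, where one must verify that the $O(1)$ corrections are genuinely bounded for all three conjugacy types of $[A]$.
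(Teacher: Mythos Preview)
Your argument is correct, and the route is genuinely different from the paper's. The paper does not prove Theorem~\ref{thm:ST12-5.3} directly; it cites \cite{ST12} and instead establishes the stronger exact formula of Theorem~\ref{thm:chap6-main-1}, from which the asymptotic is immediate. That exact formula is obtained by first reducing to the four canonical conjugacy types (Theorem~\ref{thm:types}), quoting known results for types $1$--$3$, and for type $4$ explicitly diagonalizing $A$ over $\F_{q^2}$ (Proposition~\ref{prop:powers-A}) to pass from $F_{A,\ell m}$ to $F_{A^j,m}$ via Lemma~\ref{power}, then counting the stray linear and quadratic factors of $F_{A^j,m}$ by hand (Lemma~\ref{lem:type-4-enum-2}) before applying M\"obius inversion with the principal character $\chi_D$.

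By contrast, you never split into types and never pass to $F_{A^j,m}$: you stay with the polynomials $F_{A,tm}$, and your key computation---the evaluation of $N(e)$---is handled uniformly by recognizing the fixed locus of $\Theta_e$ as the $\F_{q^{g}}$-points of a Galois twist of $\mathbb P^1$, which is automatically trivial over a finite field. This is cleaner conceptually and treats all conjugacy types at once, at the cost of only producing the asymptotic (with an explicit $O(\tau(Dm)q^{-m/2})$ error) rather than the closed form. The paper's case-by-case method buys the exact constants $c_A$ and $\eta_A$ in Eq.~\eqref{eq:invariants-formula}; your twist argument buys uniformity and avoids the eigenvalue bookkeeping entirely. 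Two small remarks: the squarefreeness of $F_{A,s}$ is not actually needed for your displayed identity, since $R_t$ counts distinct $\alpha$ and each $f\in\C_A(Dm)$ contributes exactly $Dm$ of them; and the ``bounded additive term'' between $N(e)$ and the root count in $\F_{q^e}$ is just the possible contribution of $\infty\in\mathbb P^1$, so it is at most $1$.
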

After obtaining proving this identity, they claim that all the machinery used to obtain such result is sufficient to provide the exact value of $\n_{A}(Dm)$ but their proof is too lengthy and technical. Finally, they show that the set of polynomials that are fixed by the whole group $\PGL_2(\F_q)$ is quite trivial.

\begin{theorem}[see Proposition~4.8 of~\cite{ST12}]
If $f\in \F_q[x]$ is $[A]$-invariant for any $[A]\in \PGL_2(\F_q)$, then $q=2$ and $f=x^2+x+1$. In particular, only quadratic polynomials can be fixed by the whole group $\PGL_2(\F_q)$. 
\end{theorem}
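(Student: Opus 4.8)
The plan is to reinterpret $[A]$-invariance of $f$ in terms of its root set together with the natural action of $\PGL_2(\F_q)$ on $\mathbb P^1(\overline{\F_q})$. For $f=\sum_{i=0}^n a_ix^i\in\I_n$ with $n\ge 2$ and root set $R\subseteq\overline{\F_q}$, the identity $(bx+d)^nf\big(\tfrac{ax+c}{bx+d}\big)=\sum_i a_i(ax+c)^i(bx+d)^{n-i}$ shows that $A\circ f$ is again monic of degree $n$ with root set exactly $\{\beta:\mu_A(\beta)\in R\}$, where $\mu_A$ is the M\"obius map $x\mapsto\frac{ax+c}{bx+d}$ (the point $x=-d/b$ is not a root, since there only the leading term survives and contributes $a_n(bc-ad)^n/b^n\ne 0$, and $f$ has no root at $\infty$). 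Hence $[A]\circ f=f$ iff $\mu_A$ permutes $R$, and as $A$ ranges over $\GL_2(\F_q)$ the maps $\mu_A$ run over the full action of $\PGL_2(\F_q)$ on $\mathbb P^1(\overline{\F_q})$; so \emph{$f$ is fixed by the whole group iff $R$ is $\PGL_2(\F_q)$-invariant} (note $R\subseteq\overline{\F_q}\setminus\F_q$, since $f$ is irreducible of degree $\ge 2$). Two elementary facts will be used: (a) the Frobenius $\phi:x\mapsto x^q$ commutes with every $\mu_A$, because $a,b,c,d\in\F_q$; (b) if $[\F_q(\alpha):\F_q]\ge 3$ then the $\PGL_2(\F_q)$-stabiliser of $\alpha$ is trivial, since a nontrivial $[A]$ fixing $\alpha$ forces $b\alpha^2+(d-a)\alpha-c=0$ with $(b,\,d-a,\,c)\ne(0,0,0)$, making $\deg_{\F_q}\alpha\le 2$.

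\textbf{Step 1: no invariant of degree $\ge 3$.} Suppose $f\in\I_n$ with $n\ge 3$ is $\PGL_2(\F_q)$-invariant and fix $\alpha\in R$. Every element of $R$ is conjugate to $\alpha$, hence of degree $n\ge 3$, so by (b) the invariant set $R$ is a disjoint union of $\PGL_2(\F_q)$-orbits, each of size $|\PGL_2(\F_q)|=q^3-q$; write $n=k(q^3-q)$. By (a), $\phi$ permutes these $k$ orbits, and since $\langle\phi\rangle$ is transitive on $R$ it is transitive on the $k$ orbits, so $\phi^k$ fixes each of them setwise. Thus $\phi^k(\alpha)=\mu_A(\alpha)$ for some $[A]\in\PGL_2(\F_q)$, and iterating (using (a)) gives $\phi^{jk}(\alpha)=\mu_A^{\,j}(\alpha)$ for all $j\ge 0$. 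The left side runs over the $\langle\phi^k\rangle$-orbit of $\alpha$, which has $q^3-q$ elements (as $\phi$ has order $n=k(q^3-q)$ on $R$), while the right side is the $\langle[A]\rangle$-orbit of $\alpha$, of size $\ord([A])$ by (b). Hence $\ord([A])=q^3-q=|\PGL_2(\F_q)|$, forcing $\PGL_2(\F_q)$ to be cyclic --- false for every $q$ (e.g.\ the images of $\left(\begin{smallmatrix}1&1\\0&1\end{smallmatrix}\right)$ and $\left(\begin{smallmatrix}1&0\\1&1\end{smallmatrix}\right)$ do not commute) --- a contradiction. (Alternatively, Theorem~\ref{thm:ST12-3.3} applied to $H=\PGL_2(\F_q)$ already reduces matters to $n=2$ and $(q^3-q)\mid n$, and the above disposes of the latter.)

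\textbf{Step 2: the quadratic case.} So $n=2$ and $R=\{\alpha,\alpha^q\}$ with $\alpha\in\F_{q^2}\setminus\F_q$. Writing $s=\alpha+\alpha^q$, $t=\alpha^{q+1}$, the condition $\mu_A(\alpha)=\alpha$ becomes (via $\alpha^2=s\alpha-t$) the two $\F_q$-linear equations $a=bs+d$, $c=-bt$ on $(a,b,c,d)$, whose solution set modulo scalars has order $q+1$; moreover $\mu_A(\alpha)=\alpha^q$ does have a solution, e.g.\ $A=\left(\begin{smallmatrix}-1&0\\ s&1\end{smallmatrix}\right)$, which induces $x\mapsto s-x$ and swaps $\alpha\leftrightarrow\alpha^q$. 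Hence the stabiliser of the pair $\{\alpha,\alpha^q\}$ --- equivalently of $f$ --- has order $2(q+1)$. Therefore $f$ is fixed by all of $\PGL_2(\F_q)$ exactly when $2(q+1)=|\PGL_2(\F_q)|=q(q-1)(q+1)$, i.e.\ $q(q-1)=2$, i.e.\ $q=2$. For $q=2$ the unique monic irreducible quadratic is $x^2+x+1$, and since $\PGL_2(\F_2)$ acts on the singleton $\I_2$ by Lemma~\ref{lem:aux-mobius-action}(i), this polynomial is fixed by every $[A]$. This proves the theorem.

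\textbf{Main obstacle.} The substantive point is Step~1: making the root-set reformulation precise (verifying that the factor $(bx+d)^n$ introduces no extra root and that nothing escapes to $\infty$), and then squeezing out of the interaction between the $\PGL_2(\F_q)$-orbit decomposition of $R$ and the commuting Frobenius the conclusion that $\PGL_2(\F_q)$ would be cyclic. Step~2 is then just the two short explicit stabiliser computations.
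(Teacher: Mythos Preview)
Your proof is correct. A few remarks on how it relates to the paper.

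The paper does not actually prove this statement; it is quoted verbatim from \cite{ST12} as background. What the paper \emph{does} prove is the generalisation in Theorem~\ref{thm:chap6-main-3}: for \emph{any} noncyclic subgroup $G\le\PGL_2(\F_q)$, the $G$-invariants all have degree $2$. The machinery used there is entirely different from yours: the paper introduces the $\sigma$-product (Definition~\ref{def:sigma-product}) to show that if $f$ of degree $\ge 3$ is simultaneously $[A]$- and $[A_0]$-invariant then $\sigma(A,A_0)=\pm\det(A_0)\,A$, and then runs a lengthy case analysis over the four types of reduced forms (Propositions~\ref{prop:order>4} and~\ref{prop:order4}), together with the separate $p$-group argument of Theorem~\ref{thm:p-group}.

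Your route is considerably more transparent for the specific case $G=\PGL_2(\F_q)$. Recasting $[A]$-invariance as $\mu_A$-invariance of the root set $R$ lets you invoke two clean facts: triviality of point-stabilisers on elements of degree $\ge 3$, and the commutation of Frobenius with every $\mu_A$. The key observation---that $\phi^k$ must realise the full $\PGL_2(\F_q)$-orbit of $\alpha$ via powers of a single $\mu_A$, forcing $\PGL_2(\F_q)$ cyclic---is an elegant replacement for the paper's matrix computations. Your Step~2 also pins down $q=2$ by a direct stabiliser count, which the paper (focused on the noncyclic generalisation) does not carry out explicitly.

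One small cosmetic point: $A\circ f$ need not be monic (only $[A]\circ f$ is), so in your opening sentence ``$A\circ f$ is again monic of degree $n$'' the word ``monic'' should be dropped or attributed to $[A]\circ f$; this does not affect the argument, since only the root set matters.

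In short: correct, and a genuinely different (and for this particular $G$, shorter) argument than the paper's generalisation.
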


Earlier, Garefalakis~\cite{Gar11} considers a composition that is "conjugated" to the composition $A\circ f$ given in Definition~\ref{def:mobius-action}. Namely, for $A\in \GL_2(\F_q)$ with $A=\left(\begin{matrix}
a&b\\
c&d
\end{matrix}\right)$ and $f\in \F_q[x]$ a polynomial of degree $k$, he defines
$A\star f=(cx+d)^kf\left(\frac{ax+b}{cx+d}\right)$. Observe that, for any $A\in \GL_2(\F_q)$ and $f\in \I_k$ with $k\ge 2$,
\begin{equation}\label{eq:conjugated-action}A\star f=A^T\circ f,\end{equation}
where $^T$ denotes the transpose and $A\circ f$ is as in Definition~\ref{def:mobius-action}. He obtains a characterization on the invariant elements in the case when $A$ is of the form $\left(\begin{matrix}
a&0\\
0&1\end{matrix}\right)$ or $\left(\begin{matrix}
1&b\\
0&1\end{matrix}\right)$, corresponding to the ``changes'' of variable $x\mapsto ax$ and $x\mapsto x+b$, respectively (see Theorems 1 and 3 of \cite{Gar11}). We point out that, using the relation of the compositions $A\star f$ and $A\circ f$ given in Eq.~\eqref{eq:conjugated-action}, his characterization on the invariants can be viewed as a particular case of Theorem~\ref{thm:ST12-4.2} (see Theorems 1 and 3 of~\cite{Gar11}). Nevertheless, using this characterization on invariant elements, he obtains exact enumeration formulas for the number of invariants of a given degree (see Theorems 2 and 4 of~\cite{Gar11}).

In~\cite{LR17}, following the work of \cite{Gar11} on the compositions $A\star f$, we obtain an alternative characterization of the fixed elements by some particular upper triangular matrices $A\in \GL_2(\F_q)$; the invariant polynomials appear as a composition of the form $F(g(x))$, where $g(x)$ is a polynomial of degree exactly the order of $A$ in $\GL_2(\F_q)$. In particular, we provide alternative proofs of some enumeration formulas that appear in~\cite{Gar11} and we show that, if $H\le \GL_2(\F_q)$ is a group of order $p^r$ with $r\ge 2$, there do not exist irreducible polynomials $f$ such that $A\star f=f$ for every element $A\in H$.

More recently, in~\cite{MP17}, the authors show that if $[A]\in \PGL_2(\F_q)$ is an \emph{involution} (i.e., $[A]^2=[I]$), the irreducible polynomials $f$ of degree $2m$ such that $[A]\circ f=f$ are the irreducible polynomials of the form $h^mF(\frac{g}{h})$, where $g/h$ is a rational function of degree $2$ and $F\in \F_q[x]$ has degree $m$. Motivated by the completion and generalization of these works, three fundamental questions arise:
\begin{itemize}
\item Given a subgroup $H$ of $\PGL_2(\F_q)$, which elements $f\in \mathcal I_n$ with $n\ge 2$ are fixed by $H$, i.e., $[A]\circ f=f$ for all $[A]\in H$?

\item Given $[A]\in \PGL_2(\F_q)$, how many irreducible polynomials of degree $n$ are $[A]$-invariant?

\item Is there a general connection between $[A]$-invariants and rational functions?
\end{itemize}

The aim of this paper is to unify and generalize all those recent works, giving complete answers to the questions above. Here we present a brief overview of our results. We show that, for a non cyclic subgroup $H$ of $\PGL_2(\F_q)$, the polynomials fixed by $H$ have degree at most two. We also obtain enumeration formulas for the number of $[A]$-invariants for any $[A]\in \PGL_2(\F_q)$ and show that, if $D$ is the order of $[A]$ in $\PGL_2(\F_q)$, the $[A]$-invariants of degree at least three arise from a special rational function of degree $D$ that depends only on $[A]$.

\section{Preliminaries}
In this section, we provide a background material that is frequently used along the paper.

\subsection{Auxiliary lemmas}\label{subsec:lemma-aux}
Here we provide some auxiliary results that were used in~\cite{ST12} to prove Theorem~\ref{thm:ST12-5.3}.  We use slightly different notations and, for more details, see Sections 4 and 5 of \cite{ST12}.
\begin{define}\label{def:F_Ar}
For $A = \left( \begin{array}{cc}a&b\\ c&d\end{array}\right) \in\GL_2(\F_q)$ and $r$ a non-negative integer, set
\[
F_{A,r}(x) := bx^{q^r+1}-ax^{q^r}+dx-c.
\]
\end{define}
From Theorem~4.5 of~\cite{ST12}, we have the following result.
\begin{lemma}\label{ST4.5}
Let $f$ be an irreducible polynomial of degree $Dm\ge 3$ such that $[A]\circ f=f$, where $D$ is the order of $[A]$. The following hold:
\begin{enumerate}[(i)]
\item There is a unique positive integer $\ell\le D-1$ such that $\gcd(\ell, D)=1$ and $f$ divides $F_{A, s}(x)$, where $s=\ell\cdot \frac{Dm}{D}=\ell\cdot m$.
\item For any $r\ge 1$, the irreducible factors of $F_{A, r}$ are of degree $Dr$, of degree $Dk$ with $k<r$, $r=km$ and $\gcd(m, D)=1$ and of degree at most $2$.
\end{enumerate}
\end{lemma}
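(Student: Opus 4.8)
The plan is to work directly from Theorem~\ref{thm:ST12-4.2}, which says that an irreducible $g$ of degree $\ge 2$ divides some $F_{A,r}$ if and only if $[A]\circ g = g$, combined with two elementary facts about the polynomials $F_{A,r}$: first, a degree count coming from $\deg F_{A,r} \le q^r+1$; second, a multiplicativity/recursion that relates the factors of $F_{A,r}$ to those of $F_{A,1}$ under iteration (this is the role of the index $\ell$). Throughout, let $D$ be the order of $[A]$ in $\PGL_2(\F_q)$ and let $f\in\I_{Dm}$ with $Dm\ge 3$ be $[A]$-invariant.

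For part (i), I would start from the fact that $f$ divides $F_{A,r}$ for \emph{some} $r\ge 0$ (Theorem~\ref{thm:ST12-4.2}). The key is to identify, for each $[A]$-invariant $f$, the relevant $r$ in terms of the degree $Dm$ and an auxiliary residue $\ell$. The idea is that iterating the substitution governed by $A$ on a root $\alpha$ of $f$ produces a sequence in the field $\F_{q^{Dm}}$, and the minimal $r$ for which $\alpha$ satisfies $F_{A,r}(\alpha)=0$ is controlled by how the Frobenius $\alpha\mapsto\alpha^q$ interacts with the Möbius action of $[A]$; since $[A]$ has order $D$ and $f$ has degree $Dm$, this forces $r$ to be of the form $\ell m$ with $\gcd(\ell, D)=1$ and $1\le\ell\le D-1$. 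Uniqueness of $\ell$ should come from the observation that if $f$ divided both $F_{A,\ell m}$ and $F_{A,\ell' m}$ with $\ell\not\equiv\ell'$, then combining the two relations (subtracting, or composing the corresponding fractional-linear identities on the root $\alpha$) would force $\alpha$ to lie in a proper subfield or satisfy a relation of smaller degree, contradicting $\deg f = Dm$. I expect the bookkeeping with the index $\ell$ modulo $D$ to be the main obstacle: one must track precisely how the exponent of Frobenius composes with powers of $[A]$ and rule out the degenerate residues $\ell$ sharing a factor with $D$.

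For part (ii), fix $r\ge 1$ and let $g$ be an irreducible factor of $F_{A,r}$ of degree $k\ge 2$ (the degree-$\le 2$ factors are collected into the last case, so assume $k\ge 3$ in what follows if convenient). By Theorem~\ref{thm:ST12-4.2} in the reverse direction, $g$ divides $F_{A,r}$ forces $[A]\circ g = g$, so $g$ is $[A]$-invariant; hence by Theorem~\ref{thm:ST12-3.3} its degree $k$ is either $2$ or divisible by $D$, say $k = Dk'$. Now apply part (i) to $g$: there is a unique $\ell'$ with $\gcd(\ell', D)=1$, $1\le\ell'\le D-1$, such that $g \mid F_{A,\ell' k'}$. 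The point is then to compare the two divisibilities $g\mid F_{A,r}$ and $g\mid F_{A,\ell' k'}$. A short argument (again via the root $\alpha$ of $g$: the set of exponents $t$ with $F_{A,t}(\alpha)=0$ forms a coset-like structure, closed under the operation $t\mapsto t + Dk'$ coming from $\alpha^{q^{Dk'}}=\alpha$) shows that $r\equiv \ell' k' \pmod{Dk'}$, i.e.\ either $r = \ell' k'$ exactly — which, when $\ell'$ can be taken to be $1$, gives $k = Dr$ — or $r = \ell' k' + D k' j$ for some $j\ge 1$, i.e.\ $k = D k'$ with $k' < r$ and $r = k' m''$ for the appropriate multiple, yielding the ``degree $Dk$ with $k<r$'' case together with the coprimality condition inherited from part (i). The remaining factors have degree $\le 2$. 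The main obstacle in (ii) is organizing this case split cleanly: separating $r = Dk'$ (the ``generic'' large factor) from the smaller factors requires knowing that $\ell'$ is forced to be $1$ in the first case, which in turn uses that $F_{A,r}$ has degree exactly $q^r+1$ (when $b\ne 0$) so a factor of degree $Dr$ can occur with multiplicity one and exhausts most of the degree, leaving little room — this degree-counting step is where the condition $\gcd(m,D)=1$ naturally surfaces for the intermediate factors.
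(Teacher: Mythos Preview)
The paper does not give its own proof of this lemma; it is quoted verbatim as Theorem~4.5 of \cite{ST12} (see the preamble to Subsection~\ref{subsec:lemma-aux}), so there is no in-paper argument to compare against. Your sketch is essentially the standard proof and is on the right track: pass to a root $\alpha$ of $f$, observe that $F_{A,t}(\alpha)=0$ is equivalent to $\alpha^{q^t}=\psi_A(\alpha)$ for the M\"obius map $\psi_A$ attached to $A$, and use that (a) $\alpha^{q^t}$ depends only on $t\bmod \deg f$, and (b) $\psi_A$ has exact order $D$ on $\alpha$ because a non-identity M\"obius map has at most two fixed points while $\deg f\ge 3$. Together these force $\{t\ge 0:F_{A,t}(\alpha)=0\}$ to be a single residue class $\ell m\pmod{Dm}$ with $\gcd(\ell,D)=1$, which is (i). For (ii) one applies (i) to each irreducible factor $g$ of degree $Dk'\ge 3$ and reads off from $r\equiv \ell' k'\pmod{Dk'}$ that $k'\mid r$ and $r/k'\equiv \ell'\pmod D$, hence $\gcd(r/k',D)=1$.

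Two places where your write-up drifts. First, in (ii) the coprimality $\gcd(r/k',D)=1$ is \emph{not} a degree-counting byproduct as your final sentence suggests; it falls out immediately from $r/k'\equiv \ell'\pmod D$ and $\gcd(\ell',D)=1$, with no need to invoke $\deg F_{A,r}=q^r+1$ or any ``little room'' argument. Second, the phrase ``when $\ell'$ can be taken to be $1$'' is misleading: $\ell'$ is uniquely determined by $g$ via part (i), and the degree-$Dr$ case is simply the instance $r=\ell'k'$ with $\ell'=1$ (so $k'=r$), not a choice one makes. With these two corrections the argument goes through cleanly.
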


\begin{lemma}[see \cite{ST12}, item (a) of Lemma 5.1]\label{power}
Let $r\ge 1$ and let $k$ be a divisor of $r$ such that $m:=r/k$ is relatively prime with $D$, the order of $[A]$. For $j$ such that $jm\equiv 1\pmod D$, the irreducible factors of $F_{A, r}(x)$ of degree $Dk$ are exactly the irreducible factors of $F_{A^j, k}(x)$ of degree $Dk$.
\end{lemma}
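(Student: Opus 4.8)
The plan is to reinterpret the roots of the auxiliary polynomials $F_{B,s}$ through a single M\"obius transformation attached to $B$, and then to track how the Frobenius map and that transformation interact on one Frobenius orbit. For $B=\left(\begin{smallmatrix}a&b\\ c&d\end{smallmatrix}\right)\in\GL_2(\F_q)$, I associate the transformation $\mu_B\colon z\mapsto\frac{c-dz}{bz-a}$, i.e.\ the class in $\PGL_2(\F_q)$ of $\mathrm{adj}(B^{T})$, equivalently of $(B^{-1})^{T}$. Rearranging $F_{B,s}(\alpha)=0$ gives $\alpha^{q^{s}}(b\alpha-a)=c-d\alpha$, and the finite pole of $\mu_B$, if any, is $a/b$, where $F_{B,s}$ takes the value $\det(B)/b\neq0$; hence no root of $F_{B,s}$ is a pole of $\mu_B$, and a non-pole $\alpha\in\overline{\F_q}$ satisfies $F_{B,s}(\alpha)=0$ precisely when $\alpha^{q^{s}}=\mu_B(\alpha)$. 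I would record three facts to be used repeatedly: (a) $\mu_B$ has coefficients in $\F_q$, so it commutes with the Frobenius map $\phi\colon z\mapsto z^{q}$; (b) $\mu_{B^{t}}=\mu_B^{(t)}$ for every $t\ge1$, where $\mu_B^{(t)}$ denotes the $t$-fold composition of $\mu_B$, since $((B^{t})^{-1})^{T}=((B^{-1})^{T})^{t}$; and (c) $\mu_A^{(D)}=\mu_{A^{D}}=\mathrm{id}$, because $A^{D}$ is a scalar matrix. (Fact (b) also shows the conclusion does not depend on the representative of $j$ modulo $D$.)

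For the forward inclusion, let $f$ be an irreducible factor of $F_{A,r}$ of degree $Dk$ and let $\alpha$ be a root of $f$, so $\phi$ acts on the $Dk$ roots of $f$ as a single $Dk$-cycle and $\alpha^{q^{r}}=\mu_A(\alpha)$ with $r=km$; note that every Frobenius conjugate of $\alpha$ is again a root of $F_{A,r}$, hence never a pole of $\mu_A$, so the identities below stay inside $\overline{\F_q}$. An easy induction using (a) and the base case $\alpha^{q^{km}}=\mu_A(\alpha)$ gives $\alpha^{q^{tkm}}=\mu_A^{(t)}(\alpha)$ for all $t\ge0$. Taking $t=j$: since $jm\equiv1\pmod D$ we have $jkm\equiv k\pmod{Dk}$, and because $\phi$ has period $Dk$ on $\alpha$ this yields $\alpha^{q^{k}}=\alpha^{q^{jkm}}=\mu_A^{(j)}(\alpha)=\mu_{A^{j}}(\alpha)$ by (b). Hence $F_{A^{j},k}(\alpha)=0$, so $f\mid F_{A^{j},k}$; as $\deg f=Dk$, this exhibits $f$ as an irreducible factor of $F_{A^{j},k}$ of degree $Dk$.

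The reverse inclusion runs symmetrically. If $g$ is an irreducible factor of $F_{A^{j},k}$ of degree $Dk$ with root $\beta$, then $\beta^{q^{k}}=\mu_{A^{j}}(\beta)=\mu_A^{(j)}(\beta)$ by (b), and iterating with (a) gives $\beta^{q^{tk}}=\mu_A^{(tj)}(\beta)$ for all $t\ge0$. Taking $t=m$ and using $mj\equiv1\pmod D$ together with (c) gives $\beta^{q^{r}}=\beta^{q^{mk}}=\mu_A^{(mj)}(\beta)=\mu_A(\beta)$, whence $F_{A,r}(\beta)=0$ and $g$ is an irreducible factor of $F_{A,r}$ of degree $Dk$. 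This establishes the claimed equality of the two sets of factors.

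I expect the main difficulty to be bookkeeping rather than conceptual. The first point to get right is Fact (b): the transpose/inverse twist is harmless because $B\mapsto(B^{-1})^{T}$ is an automorphism of $\PGL_2(\F_q)$ and hence preserves orders, so $\mu_A$ genuinely has order $D$ and (c) holds. The second is keeping in view that the hypothesis $\deg f=Dk$ is exactly what upgrades the congruence $jkm\equiv k\pmod{Dk}$ into an honest equality of roots; if the degree were a proper divisor of $Dk$ the argument would yield only a weaker congruence and the statement would fail. A subsidiary verification, already flagged above, is that $\mu_A$ restricts to a bijection of each relevant Frobenius orbit, which is what makes the iterated identities legitimate equalities in $\overline{\F_q}$.
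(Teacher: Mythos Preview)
Your argument is correct. The paper does not supply its own proof of this lemma; it merely quotes it from \cite{ST12}, so there is no in-paper proof to compare against. That said, your route---recasting $F_{B,s}(\alpha)=0$ as the Frobenius relation $\alpha^{q^{s}}=\mu_B(\alpha)$ for the M\"obius map attached to $[(B^{-1})^{T}]$, then exploiting that $B\mapsto(B^{-1})^{T}$ is a group homomorphism so that $\mu_{A^{t}}=\mu_A^{(t)}$, and finally using the degree hypothesis $\deg f=Dk$ to turn the congruence $jkm\equiv k\pmod{Dk}$ into an equality of Frobenius powers---is exactly the standard mechanism behind the result in \cite{ST12}. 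The pole bookkeeping is handled cleanly: since every Frobenius conjugate of a root of $F_{B,s}$ is again a root, the iterates of $\mu_B$ stay within $\overline{\F_q}$ on the relevant orbit, and the equivalence $F_{B,s}(\alpha)=0\iff \alpha^{q^{s}}=\mu_B(\alpha)$ holds for all finite $\alpha$ (the would-be pole $a/b$ gives $F_{B,s}(a/b)=\det(B)/b\neq0$, and when $b=0$ there is no finite pole at all). Your closing remarks correctly isolate the two places where care is needed: the homomorphism property of $B\mapsto(B^{-1})^{T}$ guaranteeing $\mathrm{ord}(\mu_A)=D$, and the fact that the degree hypothesis is genuinely used to reduce exponents modulo $Dk$.
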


\subsection{Some properties of the compositions $A\circ f$}
Here we present some basic properties of the compositions $A\circ f$ that are further used. 
\begin{lemma}\label{lem:propertiesAf}
For $A\in \GL_2(\F_q)$ with $A = \left( \begin{array}{cc}a&b\\ c&d\end{array}\right)$ and $f, g\in \F_q[x]$ nonzero polynomials, the following hold.

\begin{enumerate}[(i)]
\item If $b=0$ or $f(a/b)\ne 0$, then $\deg(f)=\deg(A\circ f)$. 
\item if $f$ and $g$ have the same degree $n>1$ and $f\ne -g$, the polynomial $F=A\circ f+A\circ g$ divides $(bx+d)^n\cdot (A\circ (f+g))$.
\item $A\circ (f\cdot g)=(A\circ f)\cdot (A\circ g)$. In particular, if $f$ divides $g$, then $A\circ f$ divides $A\circ g$.
\end{enumerate}
\end{lemma}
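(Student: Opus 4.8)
The plan is to verify each of the three items by a direct computation with the definition $A\circ f=(bx+d)^{\deg f}\,f\!\left(\frac{ax+c}{bx+d}\right)$, treating the substitution $x\mapsto \frac{ax+c}{bx+d}$ as an evaluation in the rational function field $\F_q(x)$ and clearing denominators carefully.

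\medskip

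For item (i), I would write $f=\sum_{i=0}^n a_i x^i$ with $a_n\neq 0$ and expand
\[
A\circ f=(bx+d)^n\sum_{i=0}^n a_i\frac{(ax+c)^i}{(bx+d)^i}=\sum_{i=0}^n a_i (ax+c)^i(bx+d)^{n-i}.
\]
If $b=0$, the $i=n$ term contributes $a_n a^n x^n$ (and $a\neq 0$ since $\det A=ad\neq 0$), while every other term has degree $\le n-1$ plus whatever comes from $(ax+c)^i$, so one checks the $x^n$ coefficient is $a_n a^n\ne 0$. If $b\ne 0$, the coefficient of $x^n$ in the sum is $\sum_{i=0}^n a_i a^i b^{n-i}=b^n f(a/b)$, which is nonzero precisely under the stated hypothesis $f(a/b)\ne 0$; in either case $\deg(A\circ f)=n=\deg f$. (One should also note that in general $A\circ f$ has degree $\le n$, so the hypothesis is exactly what prevents degree drop.)

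\medskip

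For item (iii), the cleanest route is again the explicit formula: if $\deg f=m$ and $\deg g=k$ then
\[
A\circ(fg)=(bx+d)^{m+k}\,f\!\left(\tfrac{ax+c}{bx+d}\right)g\!\left(\tfrac{ax+c}{bx+d}\right)=\bigl[(bx+d)^m f(\cdots)\bigr]\bigl[(bx+d)^k g(\cdots)\bigr]=(A\circ f)(A\circ g),
\]
using only that evaluation of a product of rational functions is the product of the evaluations and that the exponent splits additively with the degrees. The divisibility consequence is immediate: if $g=fh$ then $A\circ g=(A\circ f)(A\circ h)$. For item (ii), which I expect to be the fiddly one, the point is that $f+g$ may have degree strictly less than $n$, so $A\circ(f+g)$ is formed with a smaller power of $(bx+d)$ than $A\circ f$ and $A\circ g$; to compare them I would multiply $A\circ(f+g)$ by the ``missing'' factor. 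Concretely, write $e=\deg(f+g)\le n$ (here $f\ne -g$ guarantees $f+g\ne 0$, and $n>1$ together with $\deg f=\deg g=n$ is used to control $e$), so
\[
(bx+d)^{n}\cdot\bigl(A\circ(f+g)\bigr)=(bx+d)^{n+e}(f+g)\!\left(\tfrac{ax+c}{bx+d}\right)=(bx+d)^{n-e}\Bigl[(bx+d)^{n}f(\cdots)+(bx+d)^{n}g(\cdots)\Bigr],
\]
and the bracket is exactly $(A\circ f)+(A\circ g)=F$; hence $F$ divides $(bx+d)^n(A\circ(f+g))$, and in fact the quotient is $(bx+d)^{n-e}$. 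The only real obstacle is bookkeeping the powers of $(bx+d)$ correctly and justifying that these manipulations, a priori in $\F_q(x)$, land back in $\F_q[x]$ — which they do because each $A\circ(\cdot)$ is by construction a polynomial. I would present item (ii) last since it reuses the product identity and the degree analysis from the first two parts.
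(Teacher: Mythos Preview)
Your approach for all three items matches the paper's: expand $A\circ f=\sum_i a_i(ax+c)^i(bx+d)^{n-i}$, read off the top coefficient for (i), use multiplicativity of the substitution for (iii), and track the lost power of $(bx+d)$ for (ii). The only issue is an arithmetic slip in your displayed chain for (ii): the bracket $[(bx+d)^n f(\cdots)+(bx+d)^n g(\cdots)]$ equals $(bx+d)^n(f+g)(\cdots)$, so to match $(bx+d)^{n+e}(f+g)(\cdots)$ the prefactor must be $(bx+d)^{e}$, not $(bx+d)^{n-e}$; correspondingly the quotient $(bx+d)^n\cdot(A\circ(f+g))/F$ is $(bx+d)^{e}$. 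Equivalently --- and this is how the paper phrases it --- one has directly $F=(bx+d)^{n-e}\cdot A\circ(f+g)$, from which the divisibility is immediate. With that exponent corrected, your argument is complete and coincides with the paper's.
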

\begin{proof}
\begin{enumerate}[(i)]
\item Suppose that $\deg(f)=n$ and write $f(x)=\sum_{i=0}^{n}a_ix^i$. We obtain $$A\circ f=\sum_{i=0}^{n}a_i(ax+c)^i(bx+d)^{n-i},$$ and so the coefficient of $x^n$ in $A\circ f$ equals $\sum_{i=0}^{n}a_ia^ib^{n-i}$, that turns out to be $a_na^n\ne 0$ if $b=0$ and $b^nf(a/b)$ if $b\ne 0$. 

\item Write $f=\sum_{i=0}^na_ix^i, g=\sum_{i=0}^nb_ix^i$ and let $0\le k\le n$ be the greatest integer such that $a_k\ne -b_k$. In particular, $a_i=-b_i$ for $k<i\le n$ and so
$$A\circ f+A\circ g=\sum_{i=0}^{k}(a_i+b_i)(ax+c)^i(bx+d)^{n-i}.$$
Additionally, $f+g$ is a polynomial of degree $k$. From definition, $$A\circ (f+g)=\sum_{i=0}^{k}(a_i+b_i)(ax+c)^i(bx+d)^{k-i}=(bx+d)^{n-k}\cdot (A\circ f+A\circ g),$$ and the result follows.
\item This item follows by direct calculations.
\end{enumerate}
\end{proof}

\subsection{Invariants and conjugations}
Here we establish some interesting relations between the polynomials that are invariant by two conjugated elements. We start with the following result.

\begin{lemma}\label{lem:conj-inv}
Let $A, B, P\in \GL_2(\F_q)$ such that $[B]=[P]\cdot [A]\cdot [P]^{-1}$. For any $k\ge 2$ and any $f\in \I_k$, we have that $[B]\circ f=f$ if and only if $[A]\circ g=g$, where $g=[P]^{-1}\circ f\in \I_k$. 
\end{lemma}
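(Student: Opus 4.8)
The plan is to deduce everything directly from the three properties collected in Lemma~\ref{lem:aux-mobius-action}, treating $\PGL_2(\F_q)$ as acting on $\I_k$. First I would note that, since $f\in\I_k$ and $[P]^{-1}\in\PGL_2(\F_q)$, item (i) of Lemma~\ref{lem:aux-mobius-action} guarantees $g=[P]^{-1}\circ f\in\I_k$, so the statement makes sense; moreover $f=[P]\circ g$ by items (ii) and (iii) of that lemma.

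For the forward direction, I would assume $[B]\circ f=f$ and apply the transformation $[P]^{-1}$ to both sides, obtaining $[P]^{-1}\circ([B]\circ f)=[P]^{-1}\circ f=g$. Using the identity $[B]=[P][A][P]^{-1}$ together with associativity (item (ii)) and $[P]^{-1}[P]=[I]$ (item (iii)), the left-hand side simplifies to $[P]^{-1}[B]\circ f=[A][P]^{-1}\circ f=[A]\circ([P]^{-1}\circ f)=[A]\circ g$, hence $[A]\circ g=g$.

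For the converse, I would assume $[A]\circ g=g$ and apply $[P]$ to both sides: $[P]\circ([A]\circ g)=[P]\circ g$. The left-hand side equals $[P][A]\circ g=[P][A][P]^{-1}\circ f=[B]\circ f$, using $g=[P]^{-1}\circ f$ and associativity, while the right-hand side equals $[P][P]^{-1}\circ f=[I]\circ f=f$. Therefore $[B]\circ f=f$, completing the equivalence.

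This argument is essentially formal bookkeeping with the group action, so I do not anticipate a genuine obstacle; the only point requiring a little care is to invoke item (i) of Lemma~\ref{lem:aux-mobius-action} at the outset to ensure $g$ lies in $\I_k$ (so that the action hypotheses apply to $g$ as well), and to apply the cocycle-type relation in item (ii) consistently in the correct order when manipulating the composed transformations.
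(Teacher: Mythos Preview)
Your proposal is correct and follows essentially the same approach as the paper: both proofs reduce the equivalence to repeated applications of the group-action properties in Lemma~\ref{lem:aux-mobius-action}, with the paper presenting it as a terse chain of equivalences while you spell out the forward and backward directions separately. Your explicit remark that item (i) of Lemma~\ref{lem:aux-mobius-action} ensures $g\in\I_k$ is a small clarification the paper leaves implicit.
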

\begin{proof}
Observe that, from Lemma~\ref{lem:aux-mobius-action}, the following are equivalent:
\begin{itemize}
\item $[B]\circ f=f,$
\item $[P]\circ ([A]\circ ([P]^{-1}\circ f))=f,$
\item $[A]\circ ([P]^{-1}\circ f)=[P]^{-1}\circ f$.
\end{itemize}
\end{proof}
\begin{define}
For $A\in \GL_2(\F_q)$, $\C_A=\cup_{n\ge 2}\C_A(n)$ is the set of $[A]$-invariants.
\end{define}

Since the compositions $[A]\circ f$ preserve degree, we obtain the following result.

\begin{theorem}\label{thm:conjugates-main}
Let $A, B, P\in \GL_2(\F_q)$ such that $B=PAP^{-1}$. Let $\tau:\C_B\to \C_A$ be the map given by $\tau(f)=[P]^{-1}\circ f$. Then $\tau$ is a degree preserving one to one correspondence. Additionally, for any $n\ge 2$, the restriction of $\tau$ to the set $\C_A(n)$ rises to an one to one correspondence between $\C_A(n)$ and $\C_B(n)$. In particular, $\n_{A}(n)=\n_B(n)$.
\end{theorem}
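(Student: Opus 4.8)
The plan is to verify that $\tau(f) = [P]^{-1} \circ f$ is well-defined as a map $\C_B \to \C_A$, that it is degree preserving, that it is a bijection, and finally that it restricts to a bijection on each $\C_A(n)$. The bulk of the work has essentially been done in Lemma~\ref{lem:conj-inv} and in the basic properties collected in Lemma~\ref{lem:aux-mobius-action}, so the proof is mostly an assembly of these facts.

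First I would check that $\tau$ lands in $\C_A$. If $f \in \C_B$, then $f \in \I_k$ for some $k \ge 2$ and $[B]\circ f = f$. Since $B = PAP^{-1}$ we have $[B] = [P][A][P]^{-1}$, so Lemma~\ref{lem:conj-inv} applies and gives $[A] \circ g = g$ where $g = [P]^{-1}\circ f$; moreover $g \in \I_k$ by part (i) of Lemma~\ref{lem:aux-mobius-action}. Hence $\tau(f) = g \in \C_A(k) \subseteq \C_A$, and $\tau$ preserves degree because $g$ has the same degree $k$ as $f$.

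Next I would exhibit the inverse. The natural candidate is $\sigma: \C_A \to \C_B$ given by $\sigma(h) = [P]\circ h$. Note that $A = P^{-1}BP$, equivalently $[A] = [P]^{-1}[B][P]$, so running Lemma~\ref{lem:conj-inv} in the other direction (with the roles of $A, B$ swapped and with conjugating element $[P]^{-1}$) shows that $h \in \C_A$ implies $[P]\circ h \in \C_B$, again with degree preserved. To see $\sigma$ and $\tau$ are mutually inverse, apply parts (ii) and (iii) of Lemma~\ref{lem:aux-mobius-action}: $\sigma(\tau(f)) = [P]\circ([P]^{-1}\circ f) = [PP^{-1}]\circ f = [I]\circ f = f$, and symmetrically $\tau(\sigma(h)) = h$. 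Therefore $\tau$ is a bijection. Finally, since $\tau$ preserves degree, for each fixed $n \ge 2$ it maps $\C_A(n)$ into $\C_B(n)$ and its inverse $\sigma$ maps $\C_B(n)$ into $\C_A(n)$, so the restriction $\tau|_{\C_A(n)}$ is a bijection $\C_A(n) \to \C_B(n)$; comparing cardinalities yields $\n_A(n) = \n_B(n)$.

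There is no real obstacle here: the only point requiring a little care is keeping the direction of the conjugation straight when invoking Lemma~\ref{lem:conj-inv} for the inverse map, and remembering that well-definedness of $[P]\circ h$ as an element of $\I_k$ (not merely a polynomial) comes from Lemma~\ref{lem:aux-mobius-action}(i). Everything else is a formal consequence of the action axioms already established.
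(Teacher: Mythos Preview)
Your approach is correct and essentially identical to the paper's: both use Lemma~\ref{lem:conj-inv} for well-definedness and the action axioms in Lemma~\ref{lem:aux-mobius-action} for bijectivity and degree preservation; you are simply more explicit in writing out the inverse $\sigma = [P]\circ(-)$. One small slip to fix: in your final paragraph you reversed directions---since $\tau:\C_B\to\C_A$, its restriction is $\tau|_{\C_B(n)}:\C_B(n)\to\C_A(n)$ (and $\sigma$ goes the other way), so swap $\C_A(n)$ and $\C_B(n)$ there; note that the theorem statement itself contains the same typo.
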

\begin{proof}
From the previous lemma, $\tau$ is well defined and is an one to one correspondence. Additionally, since the compositions $[A]\circ f$ preserve degree, the restriction of $\tau$ to the set $\C_A(n)$ rises to an one to one correspondence between $\C_A(n)$ and $\C_B(n)$ and, since these sets are finite, they have the same cardinality, i.e., $\n_{A}(n)=\n_B(n)$.
\end{proof}

\begin{define}
For $G$ a subgroup of $\PGL_2(\F_q)$ and $f\in \I_k$ with $k\ge 2$, $f$ is \emph{$G$-invariant} if $[A]\circ f=f$ for any $[A]\in G$.
\end{define}

Of course, if $G$ is cyclic and generated by $[A]\in \PGL_2(\F_q)$, $f$ is $[A]$-invariant if and only if is $G$-invariant. From the previous theorem, the following corollary is straightforward.

\begin{cor}\label{cor:conjugates-main}
Let $G, H\in \PGL_2(\F_q)$ be groups with the property that there exists $P\in \GL_2(\F_q)$ such that $G=[P]\cdot H\cdot [P]^{-1}=\{[P]\cdot [A]\cdot [P]^{-1}\,|\, [A]\in H\}$. Then there is an one to one correspondence between the $G$-invariants and the $H$-invariants that is degree preserving.
\end{cor}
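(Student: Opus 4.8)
The plan is to reduce everything to Theorem~\ref{thm:conjugates-main} by picking a single matrix $P\in \GL_2(\F_q)$ realizing the conjugacy $G=[P]\cdot H\cdot [P]^{-1}$ and checking that the map $\tau$ it induces behaves uniformly on all of $G$ and $H$ simultaneously, not just for one element at a time. First I would fix $P\in \GL_2(\F_q)$ with $[P]\cdot [A]\cdot [P]^{-1}\in G$ for every $[A]\in H$, and consider the candidate bijection $\tau\colon \bigcup_{n\ge 2}\I_n\to \bigcup_{n\ge 2}\I_n$, $\tau(f)=[P]^{-1}\circ f$. By Lemma~\ref{lem:aux-mobius-action}, $\tau$ is a degree-preserving bijection of each $\I_n$ with inverse $f\mapsto [P]\circ f$, so there is no issue about $\tau$ being well defined or invertible; the only thing to verify is that it carries the set of $G$-invariants onto the set of $H$-invariants.

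The key step is the following observation: for $f\in \I_k$ with $k\ge 2$, $f$ is $G$-invariant if and only if $[P]^{-1}\circ f$ is $H$-invariant. To see this, suppose $f$ is $G$-invariant and let $[A]\in H$ be arbitrary; then $[B]:=[P]\cdot[A]\cdot[P]^{-1}\in G$, so $[B]\circ f=f$, and Lemma~\ref{lem:conj-inv} (applied with this particular $B$, $A$, $P$) gives $[A]\circ([P]^{-1}\circ f)=[P]^{-1}\circ f$. Since $[A]\in H$ was arbitrary, $[P]^{-1}\circ f=\tau(f)$ is $H$-invariant. Conversely, if $g:=\tau(f)$ is $H$-invariant, then for any $[B]\in G$ write $[B]=[P]\cdot[A]\cdot[P]^{-1}$ with $[A]\in H$ (possible since $G=[P]\cdot H\cdot[P]^{-1}$); then $[A]\circ g=g$, and the equivalence in the proof of Lemma~\ref{lem:conj-inv} run in the other direction yields $[B]\circ f=f$. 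Hence $f$ is $G$-invariant, and $\tau$ restricts to a bijection between the $G$-invariants and the $H$-invariants.

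Finally, since $\tau$ preserves degree, its restriction to the $G$-invariants of degree $n$ is a bijection onto the $H$-invariants of degree $n$, which is exactly the asserted degree-preserving one-to-one correspondence. I do not expect any genuine obstacle here: the statement is essentially Theorem~\ref{thm:conjugates-main} upgraded from a single matrix to a subgroup, and the only mild subtlety is to make sure the \emph{same} $P$ works for every element of the group — which is automatic, because $P$ is chosen once and conjugation by $[P]$ is already a group isomorphism $H\to G$. So the argument is a short bookkeeping exercise layered on top of Lemma~\ref{lem:conj-inv} and Theorem~\ref{thm:conjugates-main}.
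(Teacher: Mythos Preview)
Your proposal is correct and is exactly the approach the paper has in mind: in the paper the corollary is simply declared ``straightforward'' from Theorem~\ref{thm:conjugates-main}, and what you have written is precisely the routine verification that the single map $\tau(f)=[P]^{-1}\circ f$ from that theorem works simultaneously for every element of the group, using Lemma~\ref{lem:conj-inv} elementwise. There is no alternative route or hidden subtlety here; your write-up just makes explicit the bookkeeping the paper leaves to the reader.
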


\subsection{On the conjugacy classes of $\PGL_2(\F_q)$}
The previous results show that the fixed points can be explored considering the \emph{conjugacy classes} of $\PGL_2(\F_q)$. This requires a good understanding on the algebraic structure of the elements in $\PGL_2(\F_q)$. Our aim is to arrive in simple matrices where the study of invariants can be treatable. We start with the following definition.

\begin{define}
For $A\in \GL_2(\F_q)$ such that $[A]\ne [I]$, $A$ is of {\bf type} $1$ (resp. $2$, $3$ or $4$) if its eigenvalues are distinct and in $\F_q$ (resp. equal and in $\F_q$, symmetric and in $\F_{q^2}\setminus \F_q$ or not symmetric and in $\F_{q^2}\setminus \F_q$). 
\end{define}

From definition, elements of type $3$ appear only in odd characteristic. Also, the types of $A$ and $\lambda\cdot A$ are the same for any $\lambda\in \F_q^*$. For this reason, we say that $[A]$ is of \emph{type $t$} if $A$ is of type $t$. As follows, any element $[A]\in \PGL_2(\F_q)$ is conjugated to a special element of type $t$, for some $1\le t\le 4$.

\begin{theorem}\label{thm:types}
Let $A\in \GL_2(\F_q)$ such that $[A]\ne [I]$ and let $a, b$ and $c$ be elements of $\F_q^*$. Let $A(a):=\left( \begin{array}{cc}a&0\\ 0&1\end{array}\right),\, \mathcal E:=\left( \begin{array}{cc}1&0\\ 1&1\end{array}\right),\, C(b):=\left( \begin{array}{cc}0&1\\ b&0\end{array}\right)$ and $D(c):=\left( \begin{array}{cc}0&1\\ c&1\end{array}\right)$. Then $[A]\in \PGL_2(\F_q)$ is conjugated to:
\begin{enumerate}[(i)]
\item $[A(a)]$ for some $a\in \F_q\setminus\{0, 1\}$ if and only if $A$ is of type $1$.
\item $[\mathcal E]$ if and only if $A$ is of type $2$.
\item $[C(b)]$ for some non square $b\in \F_q^*$ if and only if $A$ is of type $3$.
\item $[D(c)]$ for some $c\in \F_q$ such that $x^2-x-c\in \F_q[x]$ is irreducible if and only if $A$ is of type $4$.
\end{enumerate}
\end{theorem}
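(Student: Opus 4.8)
The plan is to classify the conjugacy class of $[A]$ according to the four types and, in each case, exhibit an explicit conjugating matrix $P$ bringing $A$ to the stated normal form. The basic dictionary is that the type of $A$ is a conjugacy invariant: conjugation preserves the characteristic polynomial, hence preserves whether the eigenvalues are distinct, equal, in $\F_q$ or in $\F_{q^2}\setminus\F_q$, and whether they are symmetric (i.e.\ sum zero, equivalently $\Tr(A)=0$). Also recall that the normal forms themselves have recognizable types: $A(a)$ with $a\ne 0,1$ has distinct eigenvalues $a,1\in\F_q$ (type $1$); $\mathcal E$ has the repeated eigenvalue $1$ and is not scalar (type $2$); $C(b)$ has characteristic polynomial $x^2-b$, whose roots are symmetric and lie in $\F_{q^2}\setminus\F_q$ exactly when $b$ is a non-square (type $3$); $D(c)$ has characteristic polynomial $x^2-x-c$, whose roots lie in $\F_{q^2}\setminus\F_q$ and are not symmetric (since their sum is $1\ne 0$) exactly when that polynomial is irreducible (type $4$). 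So the ``only if'' directions are immediate from Theorem~\ref{thm:conjugates-main}'s underlying observation that conjugate matrices share their type, and the real content is the ``if'' direction in each case.

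For types $1$ and $3$, $A$ is diagonalizable over $\F_q$ (resp.\ over $\F_{q^2}$). In type $1$, pick a basis of eigenvectors $v_1,v_2$ with eigenvalues $\lambda_1,\lambda_2$; then $P^{-1}AP=\mathrm{diag}(\lambda_1,\lambda_2)$, and in $\PGL_2$ this equals $[A(\lambda_1/\lambda_2)]$ with $\lambda_1/\lambda_2\in\F_q\setminus\{0,1\}$. In type $3$, $A$ has eigenvalues $\lambda,-\lambda\in\F_{q^2}\setminus\F_q$; writing the first eigenvector as $u+\theta w$ with $u,w\in\F_q^2$ (where $\F_{q^2}=\F_q(\theta)$), a short computation shows that in the real basis $\{u,w\}$ the matrix $A$ takes the form $\left(\begin{smallmatrix}0&*\\ *&0\end{smallmatrix}\right)$ after a further diagonal rescaling — the off-diagonal ``trace zero'' shape coming precisely from the symmetry $\lambda\mapsto-\lambda$ of the eigenvalues. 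Rescaling one basis vector normalizes one off-diagonal entry to $1$, leaving $C(b)$ where $b=\det A$ up to squares; since the eigenvalues are not in $\F_q$, $b=-\lambda^2$ is a non-square, as required. The type $2$ case is the standard Jordan form: $A$ is not scalar with a single eigenvalue $\lambda$, so there is a basis in which $P^{-1}AP=\left(\begin{smallmatrix}\lambda&0\\1&\lambda\end{smallmatrix}\right)=\lambda\mathcal E$, i.e.\ $[P^{-1}AP]=[\mathcal E]$.

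Type $4$ is the one I expect to be the main obstacle, since there is no eigenbasis over $\F_q$ and the eigenvalues are not symmetric, so no normalization to a trace-zero form is available. The approach is rational canonical form: since the characteristic polynomial $m(x)=x^2-\Tr(A)x+\det(A)$ is irreducible over $\F_q$ (the type-$4$ hypothesis), $\F_q^2$ is a cyclic $\F_q[A]$-module, so there is a vector $v$ with $\{v,Av\}$ a basis, in which $A$ is the companion matrix $\left(\begin{smallmatrix}0&-\det A\\1&\Tr A\end{smallmatrix}\right)$. Now I must pass to the projective class and further conjugate by a diagonal matrix $\mathrm{diag}(t,1)$ to massage the companion matrix into the shape $D(c)=\left(\begin{smallmatrix}0&1\\ c&1\end{smallmatrix}\right)$; conjugating the companion matrix by $\mathrm{diag}(t,1)$ scales the $(1,2)$ entry by $t$ and the $(2,1)$ entry by $t^{-1}$, so one can arrange the $(1,2)$ entry to be $1$ while keeping the $(2,1)$ and $(2,2)$ entries in $\F_q$. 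Scaling the resulting matrix in $\PGL_2$ to make the $(2,2)$ entry equal to $1$ then forces a companion-type matrix for $x^2-x-c$ with $c\in\F_q$; irreducibility of this quadratic is equivalent to $A$ being of type $4$ because conjugation and scaling have only replaced $m(x)$ by a polynomial with the same splitting behavior over $\F_q$ (irreducible, roots in $\F_{q^2}\setminus\F_q$, sum now normalized to $1\ne 0$). The bookkeeping of which scalar to choose in $\F_q^*$ (it exists because $\Tr(A)\ne 0$ in type $4$, so the relevant entry is nonzero) is the fiddly part, but it is elementary; I would carry it out by writing $A$'s companion form explicitly and solving for $t$ and the scalar.
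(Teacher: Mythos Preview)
Your outline is essentially correct, and for types~1 and~2 it coincides with the paper's argument (diagonalize, resp.\ Jordan form, then normalize by a scalar). For types~3 and~4 you take a genuinely different route from the paper. The paper handles both of these by first diagonalizing $A$ and the target normal form over $\F_{q^2}$ (they share eigenvalues), obtaining a conjugacy $A=P\,C(b)\,P^{-1}$ (resp.\ $a^{-1}A=P\,D(c)\,P^{-1}$) with $P\in\GL_2(\F_{q^2})$, and then observing that $AP=P\,C(b)$ is a homogeneous linear system over $\F_q$, so a conjugating matrix can be found already in $\GL_2(\F_q)$. Your approach instead stays entirely inside $\F_q$: for type~4 you invoke the rational canonical (companion) form, which is available because the characteristic polynomial is irreducible, and then adjust by a diagonal conjugation and a scalar; for type~3 you extract an $\F_q$-basis from the real and imaginary parts of an $\F_{q^2}$-eigenvector. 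Your type~4 argument is arguably cleaner than the paper's descent argument and avoids the (slightly delicate) step of ensuring the $\F_q$-solution of the linear system is actually invertible. It is worth noting that the same companion-matrix argument would also dispatch type~3 uniformly, since $x^2-b$ is irreducible there as well.

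One small correction in your type~3 sketch: you write $b=-\lambda^2$ and claim this is a non-square. In fact the off-diagonal entry you obtain is $b=\lambda^2$, not $-\lambda^2$. Concretely, taking $\theta=\lambda$ (legitimate since $\lambda^q=-\lambda$ forces $\lambda^2\in\F_q$) and writing the eigenvector as $v=u+\lambda w$ with $u,w\in\F_q^2$, the relation $Av=\lambda v$ unpacks to $Au=\lambda^2 w$ and $Aw=u$, so in the basis $\{u,w\}$ one gets exactly $C(\lambda^2)$. The quantity $\lambda^2$ is indeed a non-square (else $\lambda\in\F_q$), whereas $-\lambda^2=\det A$ is a non-square only when $-1$ is a square in $\F_q$. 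This is a bookkeeping slip, not a structural gap.
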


\begin{proof}
\begin{enumerate}
\item Suppose that $[A]$ is conjugated to $[A(a)]$ for some $a\ne 0,1$. In other words, there exist $P\in \GL_2(\F_q)$ and $\lambda\in \F_q^*$ such that $A=\lambda\cdot P\cdot A(a)\cdot P^{-1}$, hence $A$ and $\lambda A(a)$ are conjugated. In particular, these elements have the same eigenvalues, which are $\lambda, \lambda\cdot a \in \F_q$. Since $a\ne 1$, these eigenvalues are distinct and then $A$ is of type 1. Conversely, if $A$ is of type 1, it has distinct eigenvalues in $\F_q^*$ and then it is diagonalizable. Suppose that the eigenvalues are $\alpha$ and $\beta$. If we set $a=\alpha/\beta\ne 1$, we have that $\beta A$ is conjugated to $A(a)$, hence $[A]$ is conjugated to $[A(a)]$.

\item Suppose that $[A]$ is conjugated to $[\mathcal E]$. In particular, there exist $P\in \GL_2(\F_q)$ and $\lambda\in \F_q^*$ such that $A=\lambda\cdot P\cdot \mathcal E\cdot P^{-1}$, hence the characteristic polynomial of $A$ is the same of $\lambda \mathcal E$, which is $(x-\lambda)^2$. Therefore, $A$ has equal eigenvalues in $\F_q$, i.e., $A$ is of type $2$. Conversely, if $A$ is of type $2$, then it has equal eigenvalues, say $\lambda\in \F_q^*$. In particular, there exists $P\in \GL_2(\F_q)$ such that
$A=P\cdot \left( \begin{array}{cc}\lambda &0\\ 1&\lambda\end{array}\right) P^{-1}$, hence $\lambda^{-1}A=P\cdot Q \left( \begin{array}{cc}1&0\\ 1&1\end{array}\right)Q^{-1}P^{-1},$
where $Q=\left(\begin{matrix}\lambda&0\\0&1\end{matrix}\right)$. This shows that $[A]$ is conjugated to $[\mathcal E]$.

\item As in the previous items, it is straightforward to check that if $[A]$ is conjugated to $[C(b)]$, then the characteristic polynomial of $A$ is $x^2-b\lambda^2$ for some $\lambda \in \F_q^*$. Since $b$ is a non square, $A$ has symmetric eigenvalues in $\F_{q^2}\setminus \F_q$, hence is of type 3. Conversely, if $A$ is of type 3, then its characteristic polynomial is of the form $x^2-b$, for some non square $b\in \F_q$. In particular, $A$ and $C(b)$ are diagonalizable in $\F_{q^2}$, with same eigenvalues. In other words, $A=SDS^{-1}$ and $C(b)=TDT^{-1}$. Therefore $A$ and $C(b)$ are conjugated by an element $P\in \GL_2(\F_{q^2})$, i.e., $A=P\cdot C(b)\cdot P^{-1}$. Writing $P=\left(\begin{matrix}x&y\\z&w\end{matrix}\right)$, the equality $AP=P\cdot C(b)$ yields a linear system in four variables with coefficients in $\F_q$ (since $A, C(b)\in \GL_2(\F_q)$). Since it has a solution, the elements $x, y, z$ and $w$ lie in $\F_q$ and so $P\in \GL_2(\F_q)$. Hence $[A]=[P]\cdot [C(b)]\cdot [P]^{-1}$, where $[P]\in \PGL_2(\F_q)$.

\item If $[A]$ is conjugated to $[D(c)]$, the characteristic polynomial of $A$ equals $x^2-\lambda x-\lambda ^2 c$ for some nonzero $\lambda \in \F_q$. Since $x^2-x-c\in \F_q[x]$ is irreducible, so is $x^2-\lambda x-\lambda ^2 c$. In particular, $A$ has non symmetric eigenvalues in $\F_{q^2}\setminus \F_q$, hence is of type 4. Conversely suppose that $A$ is of type 4. Therefore, its characteristic polynomial is an irreducible polynomial $x^2-ax-b\in \F_q$ such that $a$ is nonzero. In particular, the characteristic polynomial of $a^{-1}A$ is $x^2-x-b/a^2$, an irreducible polynomial of degree two. It follows that $a^{-1}A$ and $D(-b/a^2)$ are diagonalizable in $\F_{q^2}$, with same eigenvalues. We proceed as in the previous item and conclude that $[A]$ is conjugated to $[D(-b/a^2)]$ by an element in $\PGL_2(\F_q)$. 
\end{enumerate}
\end{proof}

\begin{define}
An element $A\in \GL_2(\F_q)$ is in \textbf{reduced form} if it is equal to $A(a)$, $\mathcal E$, $C(b)$ or $D(c)$ for some suitable $a$, $b$ or $c$ in $\F_q$.
\end{define}

The study of invariants is naturally focused on the matrices in reduced form. In particular, we are mostly interested in the following "changes of variable": $x\mapsto ax, x\mapsto x+1, x\mapsto b/x$ and $x\mapsto \frac{c}{x+1}$.
We finish this section giving a complete study on the order of the elements in $\PGL_2(\F_q)$, according to their type.  

\begin{lemma}\label{lem:order}
Let $[A]$ be an element of type $t$ and let $D$ be its order in $\PGL_2(\F_q)$. The following hold:
\begin{enumerate}[(i)]
\item For $t=1$, $D>1$ is a divisor of $q-1$.
\item For $t=2$, $D=p$.
\item For $t=3$, $D=2$.
\item For $t=4$, $D$ divides $q+1$ and $D>2$. 
\end{enumerate}
\end{lemma}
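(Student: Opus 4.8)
The plan is to analyze each type separately by passing to a matrix in reduced form, which is legitimate since conjugate elements of $\PGL_2(\F_q)$ have the same order (this follows from Corollary~\ref{cor:conjugates-main}, or simply because $[P][A]^k[P]^{-1}=[P][A][P]^{-1}\cdots$, so $[A]^k=[I]$ iff $[PAP^{-1}]^k=[I]$). So by Theorem~\ref{thm:types} we may assume $A$ equals $A(a)$, $\mathcal E$, $C(b)$ or $D(c)$ respectively.

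For $t=1$, take $A=A(a)$ with $a\in\F_q\setminus\{0,1\}$. Then $A(a)^k=\mathrm{diag}(a^k,1)$, and $[A(a)^k]=[I]$ in $\PGL_2(\F_q)$ precisely when $\mathrm{diag}(a^k,1)$ is scalar, i.e.\ $a^k=1$. Hence $D=\ord(a)$ in $\F_q^*$, which divides $q-1$; and $D>1$ because $a\ne 1$, i.e.\ $[A]\ne[I]$. For $t=2$, take $A=\mathcal E=\left(\begin{smallmatrix}1&0\\1&1\end{smallmatrix}\right)$; then $\mathcal E^k=\left(\begin{smallmatrix}1&0\\k&1\end{smallmatrix}\right)$, which is scalar iff $k\equiv 0\pmod p$, so $D=p$. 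For $t=3$, take $A=C(b)$ with $b$ a non-square; then $C(b)^2=\left(\begin{smallmatrix}b&0\\0&b\end{smallmatrix}\right)=bI$, so $[C(b)]^2=[I]$, and since $[C(b)]\ne[I]$ we get $D=2$.

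For $t=4$, take $A=D(c)$ with $x^2-x-c$ irreducible over $\F_q$, so the eigenvalues $\alpha,\beta\in\F_{q^2}\setminus\F_q$ are the two roots, satisfying $\beta=\alpha^q$, $\alpha+\beta=1$, $\alpha\beta=-c$. Diagonalizing over $\F_{q^2}$, $D(c)^k$ has eigenvalues $\alpha^k,\beta^k$, and $[D(c)^k]=[I]$ iff $\alpha^k=\beta^k$, i.e.\ $(\alpha/\beta)^k=1$; thus $D=\ord(\alpha/\beta)$ in $\F_{q^2}^*$. Since $\alpha\beta=-c\in\F_q^*$ we have $\beta=-c/\alpha$, so $\alpha/\beta=-\alpha^2/c$; alternatively, $\alpha/\beta=\alpha/\alpha^q=\alpha^{1-q}$, whose order divides $\gcd(q^2-1,\,q+1)\cdot\text{(stuff)}$ — more cleanly, $\alpha^{1-q}$ has order dividing $q+1$ because $(\alpha^{1-q})^{q+1}=\alpha^{1-q^2}=1$. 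Hence $D\mid q+1$. Finally $D\ne 1$ since $[A]\ne[I]$, and $D\ne 2$: if $D=2$ then $(\alpha/\beta)^2=1$ forces $\alpha=-\beta$ (as $\alpha\ne\beta$), so $\alpha+\beta=0$, contradicting $\alpha+\beta=1$; therefore $D>2$.

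The only mildly delicate point is the type-$4$ case, where one must justify that comparing eigenvalues over $\F_{q^2}$ correctly detects when the matrix becomes scalar, and extract the bound $D\mid q+1$ from $\alpha/\beta=\alpha^{1-q}$ using $\beta=\alpha^q$ (the Frobenius action permuting the two conjugate roots). Everything else is a direct matrix computation.
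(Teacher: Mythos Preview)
Your proof is correct and follows essentially the same approach as the paper: reduce to matrices in reduced form via Theorem~\ref{thm:types}, dispatch types $1$--$3$ by direct matrix computation, and for type $4$ argue via the eigenvalues $\alpha,\alpha^q\in\F_{q^2}$, observing that $[A]^k=[I]$ iff $\alpha^{k(q-1)}=1$, whence $D\mid q+1$, with $D=2$ ruled out because it would force $\alpha^q=-\alpha$. The only cosmetic difference is that for $D\ne 2$ you invoke the trace relation $\alpha+\alpha^q=1$ while the paper phrases it as ``$\alpha^q=-\alpha$ would make $A$ of type $3$''; these are equivalent (and your version works uniformly in characteristic $2$ as well, since there $(\alpha/\beta)^2=1$ already forces $\alpha=\beta$).
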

\begin{proof}
Since any two conjugated elements in $\PGL_2(\F_q)$ have the same order, from Theorem~\ref{thm:types}, we can suppose that $A$ is in the reduced form. From this fact, items (i), (ii) and (iii) are straightforward. For item (iv), let $\alpha, \alpha^q$ be the eigenvalues of $A$. We that $[A]^D=[A^D]$ and then $[A]^D=[I]$ if and only if $A^D$ equals the identity element $I\in\GL_2(\F_q)$ times a constant. The latter holds if and only if $\alpha^D$ and $\alpha^{qD}$ are equal. Observe that $\alpha^D=\alpha^{qD}$ if and only if $\alpha^{(q-1)D}=1$. In particular, since $\alpha\in \F_{q^2}\setminus \F_q$, we have $D>1$ and $D$ divides $q+1$. If $D=2$, then $\alpha^{q}=-\alpha$, a contradiction since $A$ is not of type $3$.
\end{proof}

We also have the "converse" of this result.

\begin{prop}\label{prop:orders}
Let $q$ be a power prime and let $D>1$ be a positive integer.
The following hold.

\begin{enumerate}[(i)]
\item if $D$ divides $q-1$, there exists an element $[A]\in \PGL_2(\F_q)$ of type $1$ in reduced form such that its order equals $D$,

\item if $D>2$ divides $q+1$, there exists an element $[A]\in \PGL_2(\F_q)$ of type $4$ in reduced form such that its order equals $D$.
\end{enumerate}
\end{prop}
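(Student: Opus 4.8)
The plan is to exhibit, for each admissible $D$, an explicit matrix in reduced form whose order in $\PGL_2(\F_q)$ is exactly $D$, using the eigenvalue description of the order that was established in Lemma~\ref{lem:order}. For part (i), recall that an element $[A(a)]$ of type $1$ with $a\in\F_q^*\setminus\{1\}$ has order equal to the multiplicative order of $a$ in $\F_q^*$, since $A(a)^k = A(a^k)$ and $[A(a^k)]=[I]$ precisely when $a^k=1$. Because $\F_q^*$ is cyclic of order $q-1$, for every divisor $D>1$ of $q-1$ there is an element $a$ of order exactly $D$ in $\F_q^*$; in particular $a\ne 1$, so $A(a)$ is genuinely of type $1$ and in reduced form, and $[A(a)]$ has order $D$. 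This settles (i).

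For part (ii), I would work in $\F_{q^2}$. Fix an element $\beta\in\F_{q^2}^*$ of order exactly $q+1$ (such $\beta$ exists since $\F_{q^2}^*$ is cyclic of order $q^2-1=(q-1)(q+1)$), and more generally, since $D>2$ divides $q+1$, fix $\alpha\in\F_{q^2}^*$ of multiplicative order exactly $D$. The key point is that $\alpha$ must lie in $\F_{q^2}\setminus\F_q$: its order $D$ does not divide $q-1$ because $\gcd(q-1,q+1)\in\{1,2\}$ while $D>2$ divides $q+1$, so $\alpha\notin\F_q^*$. Then the minimal polynomial of $\alpha$ over $\F_q$ is a degree-two irreducible polynomial $x^2-ux-v$ with roots $\alpha,\alpha^q$; one checks that $u=\alpha+\alpha^q$ is nonzero (otherwise $\alpha^q=-\alpha$, forcing $\alpha^{q-1}=-1$, hence $\alpha^{2(q-1)}=1$ and $D\mid 2(q-1)$, which combined with $D\mid q+1$ and $D>2$ gives a contradiction via $\gcd(q+1,2(q-1))\mid 4$ — I'd handle the small even cases directly). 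Dividing by $u$, the matrix $u^{-1}A$ where $A$ has characteristic polynomial $x^2-ux-v$ has characteristic polynomial $x^2-x-c$ with $c=v/u^2$, which is irreducible; so $D(c)$ is in reduced form and of type $4$ by Theorem~\ref{thm:types}. Finally, by the computation in the proof of Lemma~\ref{lem:order}, the order of $[D(c)]$ equals the least $k$ with $\alpha^{(q-1)k}=1$, i.e.\ $\frac{D}{\gcd(D,q-1)}$; since $\gcd(D,q-1)=1$ (again because $D\mid q+1$, $D>2$, so $D$ shares no factor with $q-1$ beyond what divides $\gcd(q-1,q+1)\le 2$, and $D$ being a divisor of $q+1$ greater than $2$ cannot be $2$ itself), this order is exactly $D$.

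The main obstacle is the bookkeeping around the $\gcd$ conditions relating $D$, $q-1$ and $q+1$: one must verify cleanly that a divisor $D>2$ of $q+1$ is automatically coprime to $q-1$ (equivalently that the chosen $\alpha$ of order $D$ genuinely generates a non-split situation giving type $4$ rather than type $3$ or a split element), and that the linear-form coefficient $u$ of the minimal polynomial is nonzero so that the normalization to the form $D(c)$ is legitimate. Both reduce to the elementary fact $\gcd(q-1,q+1)\mid 2$ together with $D>2$; once this is in place, the construction and the order computation are immediate from Lemma~\ref{lem:order} and Theorem~\ref{thm:types}. I would also remark that the argument simultaneously re-proves the ``converse'' direction implicitly used later, namely that every divisor $D$ of $q\pm1$ in the relevant range is actually realized.
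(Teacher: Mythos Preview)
Your argument for part (i) is correct and matches the paper's approach.

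For part (ii), however, there is a genuine gap. Your key claim that $\gcd(D,q-1)=1$ is false whenever $D$ is even. Indeed, if $D>2$ is even and $D\mid q+1$, then $q$ is odd, so $2\mid q-1$ and hence $\gcd(D,q-1)=2$ (it cannot exceed $2$ since $\gcd(q-1,q+1)\mid 2$, but it is exactly $2$). Your own formula then gives the order of $[D(c)]$ as $D/\gcd(D,q-1)=D/2$, not $D$. Concretely, take $q=5$, $D=6$: an $\alpha\in\F_{25}^*$ of order $6$ satisfies $\alpha^{q-1}=\alpha^4$, which has order $3$, so your construction produces an element of order $3$. The sentence ``$D$ being a divisor of $q+1$ greater than $2$ cannot be $2$ itself'' is true but irrelevant; what matters is whether $D$ is even, not whether $D$ equals $2$. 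The same issue resurfaces in your $u\ne 0$ verification: the case $D=4$ with $q\equiv 3\pmod 4$ is not a removable ``small case'' but the first instance of the systematic failure for even $D$.

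The paper avoids this by choosing $\beta\in\F_{q^2}^*$ of multiplicative order $(q-1)D$ rather than $D$ (take $\beta=\theta^{(q+1)/D}$ for $\theta$ a primitive element of $\F_{q^2}$). Then the order of $[D(c)]$ is the multiplicative order of $\beta^{q-1}$, which is exactly $\frac{(q-1)D}{\gcd((q-1)D,\,q-1)}=D$, with no parity obstruction. The nonvanishing of the trace $a=\beta+\beta^q$ also follows cleanly: $\beta^q=-\beta$ would force $(q-1)D\mid 2(q-1)$, i.e.\ $D\mid 2$, contradicting $D>2$. Your construction can be repaired simply by replacing the element of order $D$ with one of order $(q-1)D$.
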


\begin{proof}
\begin{enumerate}[(i)]
\item Let $a\in \F_q^*$ be a primitive element and set $k=(q-1)/D$. Therefore, $a^k$ has multiplicative order $D$ and so $[A(a)]$ has order $D$ in $\PGL_2(\F_q)$.

\item Let $\theta\in \F_{q^2}$ be a primitive element and set $k=(q+1)/D$. Therefore, $\beta=\theta^{k}$ has multiplicative order $(q-1)\cdot D$. Let $x^2-ax+b\in \F_q[x]$ be the minimal polynomial of $\beta$. Since $D>2$, $\beta^{q}\ne \pm\beta$, hence $a\ne 0$. If we set $\beta_0=\beta/a$ and $c=b/a^2$, we see that $D(c)$ has eigenvalues $\beta_0, \beta_0^q$ and is diagonalizable over $\F_{q^2}$. Therefore, the order of $[D(c)]$ equals the least positive integer $d$ such that $\beta_0^d=\beta_0^{qd}$, i.e., $1=\beta_0^{(q-1)d}=\beta^{(q-1)d}$. Since $\beta$ has multiplicative order $(q-1)\cdot D$, it follows that $d=D$.
\end{enumerate}
\end{proof}

\section{On $G$-invariants: the noncyclic case}
As pointed out earlier, in~\cite{ST12}, the authors show that the irreducible polynomials fixed by the whole group $\PGL_2(\F_q)$ are necessarily quadratic, in contrast to the infinitely many irreducible polynomials that are fixed by a single element $[A]\in \PGL_2(\F_q)$. In this section, we show that this phenomena occurs in any noncyclic subgroup of $\PGL_2(\F_q)$. Namely, we have the following theorem.
\begin{theorem}\label{thm:chap6-main-3}
Let $G\le \PGL_2(\F_q)$ be a noncyclic group. If $f$ is a monic irreducible polynomial of degree $n$  such that $f$ is $[A]$-invariant for any $[A]\in G$, then $n=2$, i.e., $f$ is a quadratic polynomial.
\end{theorem}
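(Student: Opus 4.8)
The plan is to argue by contradiction and show that the existence of a $G$-invariant $f\in\I_n$ with $n\ge 3$ forces $G$ to be cyclic, which is impossible. So assume $n\ge 3$ and fix a root $\alpha\in\F_{q^n}$ of $f$. Since $f$ is irreducible of degree $n\ge 2$, its root set is $R=\{\alpha,\alpha^q,\dots,\alpha^{q^{n-1}}\}$, a set of exactly $n$ distinct elements of $\F_{q^n}\setminus\F_q$, and the Frobenius $\phi\colon x\mapsto x^q$ restricts to an $n$-cycle on $R$.

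The first step is to translate $[A]$-invariance into a statement about the roots. For $A=\left(\begin{matrix}a&b\\ c&d\end{matrix}\right)\in\GL_2(\F_q)$, let $M_A$ be the M\"obius transformation $x\mapsto\frac{dx-c}{-bx+a}$, i.e. the projective action of $\left(\begin{matrix}d&-c\\ -b&a\end{matrix}\right)$, which represents $(A^{T})^{-1}$ in $\PGL_2(\F_q)$. Expanding $A\circ f$ as a product over the roots of $f$ (or, alternatively, reading off the root relation from Theorem~\ref{thm:ST12-4.2}: $F_{A,r}(\alpha)=0$ gives $M_A(\alpha)=\alpha^{q^r}$) shows that $[A]\circ f=f$ implies $M_A(\alpha)=\alpha^{q^{r_A}}$ for some $r_A\in\{0,1,\dots,n-1\}$; here $-b\alpha+a\ne 0$ precisely because $\alpha\notin\F_q$, so $M_A(\alpha)$ is a genuine element of $R$. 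Since $M_A$ has coefficients in $\F_q$, it commutes with $\phi$, so $M_A(\alpha^{q^j})=(M_A(\alpha))^{q^j}=\alpha^{q^{j+r_A}}$ for every $j$. Hence $M_A$ permutes $R$, acting — under the identification $\alpha^{q^j}\leftrightarrow j\bmod n$ — as the translation $j\mapsto j+r_A$.

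The second step packages this into a homomorphism $\rho\colon G\to\Z/n$, $[A]\mapsto r_A$. It is well defined ($M_A$ depends only on $[A]$, and $r_A$ is determined mod $n$ because $\alpha$ has degree $n$ over $\F_q$), and it is a homomorphism because $A\mapsto(A^{T})^{-1}$ is a group automorphism of $\GL_2(\F_q)$ descending to $\PGL_2(\F_q)$, giving $M_{AB}=M_A\circ M_B$ and hence, via the commuting relation with $\phi$, $\alpha^{q^{r_{AB}}}=M_{AB}(\alpha)=M_A(\alpha^{q^{r_B}})=\alpha^{q^{r_A+r_B}}$, so that $r_{AB}\equiv r_A+r_B\pmod n$. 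Finally $\rho$ is injective: if $r_A=0$ then $M_A(\alpha)=\alpha$, whence $M_A$ fixes every point of $R$, i.e. at least $n\ge 3$ points of $\mathbb{P}^1$, so $M_A$ is the identity M\"obius transformation and $[A]=[I]$. Therefore $G$ embeds into the cyclic group $\Z/n$, so $G$ is cyclic — contradicting the hypothesis. We conclude that $n=2$.

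The argument is short and conceptual, so there is no serious obstacle; the points needing genuine care are the bookkeeping in ``$[A]\circ f=f\Rightarrow M_A(\alpha)\in R$'' (no pole occurs at $\alpha$, which reduces to $\alpha\notin\F_q$, and the leading coefficients must be normalised correctly) and the verification that $[A]\mapsto M_A$ is multiplicative. Note that this proof uses neither Theorem~\ref{thm:ST12-3.3} nor the conjugacy-class analysis of Section~2; specialising to $G=\PGL_2(\F_q)$ recovers Proposition~4.8 of~\cite{ST12}.
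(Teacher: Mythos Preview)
Your proof is correct and takes a genuinely different---and considerably shorter---route than the paper's.

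The paper proves Theorem~\ref{thm:chap6-main-3} via an extended case analysis: it first disposes of noncyclic $p$-groups separately (Theorem~\ref{thm:p-group}), then introduces the $\sigma$-product $\sigma(A,A_0)$ (Definition~\ref{def:sigma-product} and Proposition~\ref{prop:sigma}) to constrain pairs of matrices that simultaneously fix $f$, and finally splits according to the ``type'' of $G$ (Propositions~\ref{prop:order>4} and~\ref{prop:order4}), performing explicit matrix computations in each of four cases to reach a contradiction. Your argument bypasses all of this: by translating $[A]\circ f=f$ into $M_A(\alpha)=\alpha^{q^{r_A}}$ on the root set, you build an injective homomorphism $G\hookrightarrow\Z/n\Z$ directly, and cyclicity of $G$ follows at once. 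The only external input you use is Theorem~\ref{thm:ST12-4.2} (or, equivalently, the factorisation of $A\circ f$ over the roots of $f$), together with the elementary fact that a M\"obius transformation fixing three points of $\mathbb{P}^1$ is the identity.

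What each approach buys: the paper's machinery ($\sigma$-products, the type classification, reduced forms) is reused heavily in the later sections on enumeration and on the rational functions $Q_A$, so from the paper's standpoint the investment is amortised. Your argument, by contrast, is self-contained and strictly more informative for this particular theorem: the embedding $G\hookrightarrow\Z/n\Z$ immediately yields $|G|\mid n$, so you recover Theorem~\ref{thm:ST12-3.3} for arbitrary (not just cyclic) $G$ as a by-product, rather than invoking it as an ingredient.
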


The proof of this result requires a variety of methods. We start looking at the special case when $G$ is a $p$-subgroup of $\PGL_2(\F_q)$, where some results that are presented in~\cite{LR17} can be employed.

\begin{theorem}\label{thm:p-group}
Let $G\le \PGL_2(\F_q)$ be a group of order $p^r$, where $r\ge 2$. Then here do not exist irreducible polynomials that are $G$-invariant.
\end{theorem}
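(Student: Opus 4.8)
\textbf{Proof plan for Theorem~\ref{thm:p-group}.}

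The plan is to reduce to a concrete upper triangular model and then invoke the structural result on invariants by unipotent matrices from~\cite{LR17}. First I would recall that a subgroup $G\le\PGL_2(\F_q)$ of order $p^r$ is, up to conjugation, contained in the group of (classes of) upper triangular unipotent matrices $\left(\begin{smallmatrix}1&*\\0&1\end{smallmatrix}\right)$; this is the standard fact that the $p$-Sylow of $\PGL_2(\F_q)$ is elementary abelian of order $q$ and all its subgroups are conjugate to subgroups of the upper unitriangular group. By Corollary~\ref{cor:conjugates-main}, the existence of a $G$-invariant irreducible polynomial is a conjugation-invariant property, so I may assume $G$ itself sits inside the unitriangular group. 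In particular $G$ contains a class $[\mathcal E]=\left(\begin{smallmatrix}1&0\\1&1\end{smallmatrix}\right)$ after passing to transposes if needed, or more precisely $G$ contains at least two distinct nontrivial unipotent classes since $|G|=p^r\ge p^2$.

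Next I would argue by contradiction: suppose $f\in\I_n$ is $G$-invariant. Since $G$ has order $p^r>1$, Theorem~\ref{thm:ST12-3.3} gives $n=2$ or $p^r\mid n$. If $n=2$ we are not yet done — the theorem we are proving asserts there is \emph{no} such polynomial at all — so I must also rule out the quadratic case; but for $n\ge 3$ the route is: every element of $G$ is unipotent, so it has order $p$ (type $2$, by Lemma~\ref{lem:order}(ii)), and $f$ is invariant under each such $[\mathcal E_i]$. Here I would quote the characterization from~\cite{LR17}: an irreducible polynomial invariant under the change of variable $x\mapsto x+b$ (equivalently under a unipotent $A\star f = f$, hence via Eq.~\eqref{eq:conjugated-action} under $A^T\circ f$) must be of the form $F(g(x))$ where $g$ has degree exactly the order of $A$, i.e.\ degree $p$. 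The key point is that being invariant under two \emph{independent} unipotent elements $\left(\begin{smallmatrix}1&b_1\\0&1\end{smallmatrix}\right)$ and $\left(\begin{smallmatrix}1&b_2\\0&1\end{smallmatrix}\right)$ with $b_1/b_2\notin\F_p$ forces incompatible factorization structures; concretely, using Theorem~\ref{thm:ST12-4.2}, $f$ divides $F_{A_i,r_i}(x)=b_ix^{q^{r_i}}+x - c_i$ for suitable $r_i$, and combining the two divisibilities with $\gcd$ one extracts that $f$ divides a polynomial of degree strictly smaller than $\deg f$, or that $f$ divides a nonzero $\F_q$-linear combination that has too small a degree, a contradiction once $\deg f\ge 3$.

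I expect the main obstacle to be making the "two independent unipotents force a contradiction" step fully rigorous and handling the borderline quadratic case $n=2$ uniformly. For $n=2$ one can check directly: a quadratic irreducible $f=x^2+ux+v$ fixed by $x\mapsto x+b$ satisfies $f(x+b)=f(x)$, which forces $2b=0$ and hence $b=0$ in odd characteristic (no nontrivial invariance) while in characteristic $2$ it forces $b^2+ub=0$, i.e.\ $u=b$; invariance under a second independent translation $x\mapsto x+b'$ then forces $u=b'\ne b$, a contradiction. So in all characteristics no quadratic is invariant under a noncyclic $p$-group, completing the case $n=2$, and the $n\ge 3$ case is handled by the degree/divisibility argument above; hence no $G$-invariant irreducible polynomial exists, as claimed. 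The cleanest write-up will probably conjugate $G$ into the unitriangular group, pick two generators, and run the $F_{A,r}$ divisibility computation of Theorem~\ref{thm:ST12-4.2} simultaneously for both, then read off the contradiction from degrees.
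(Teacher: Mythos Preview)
Your reduction step---conjugating $G$ into the unipotent subgroup via Corollary~\ref{cor:conjugates-main} and then translating $[A]$-invariance into the functional equation $f(x+a)=f(x)$ for all $a$ in an $\F_p$-subspace $S_0\subseteq\F_q$ of dimension $r\ge 2$---is exactly what the paper does. The divergence is in how you finish. The paper simply invokes Theorem~2.7 of~\cite{LR17}, which states directly that no irreducible polynomial over $\F_q$ satisfies $f(x+a)=f(x)$ for all $a$ in such a subspace; this is a one-line citation and covers all degrees at once, so no separate treatment of $n=2$ is needed.

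Your hands-on route has a genuine gap in the $n\ge 3$ part. You propose to use Theorem~\ref{thm:ST12-4.2} to get $f\mid F_{A_i,r_i}$ for two independent unipotents $A_1,A_2$ and then ``combine the divisibilities with $\gcd$''. The problem is that the integers $r_1,r_2$ need not coincide: Lemma~\ref{ST4.5} only guarantees $f\mid F_{A_i,\ell_i m}$ with $1\le\ell_i\le p-1$, and the $\ell_i$ may differ. When they do, the difference $F_{A_1,r_1}-F_{A_2,r_2}$ is not a low-degree polynomial, and the naive $\gcd$ argument collapses. (The paper flags exactly this obstruction in the paragraph preceding Lemma~\ref{lem:aux-divisor}, and then builds the $\sigma$-product machinery of Definition~\ref{def:sigma-product} and Proposition~\ref{prop:sigma} to get around it in the general noncyclic case.) Your alternative of citing the $F(g(x))$ characterization from~\cite{LR17} with $\deg g=p$ also does not by itself force a contradiction for two independent translations---you would still need to argue that the two resulting compositional structures are incompatible, which is essentially what Theorem~2.7 of~\cite{LR17} packages. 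Your direct computation for $n=2$ is correct, but once you cite Theorem~2.7 of~\cite{LR17} it becomes unnecessary.
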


\begin{proof}
We observe that $\PGL_2(\F_q)$ has $q^3-q=q(q-1)(q+1)$ elements. Write $q=p^s$ with $s\ge 1$. If $s=1$, the result follows by emptiness. If $s>1$, we observe that any Sylow $p$-subgroup of $\PGL_2(\F_q)$ has order $p^s$ and $G$ is contained in one of them, say $G_0$. Moreover, we have the explicit description of one of these Sylow $p$-subgroups: $$\G_q:=\left\{\left(\begin{matrix}
1&0\\a&1
\end{matrix}\right)\,;\, a\in \F_q\right\}.$$
\noindent It is well known that Sylow subgroups are pairwise conjugated. In particular, $\G_q$ and $G_0$ are conjugated. This conjugation rises to a subgroup $H_0$ of $\G_q$ that is conjugated to $G$. Therefore, form Corollary~\ref{cor:conjugates-main}, we can suppose that $G$ is a subgroup of $\G_q$.

If we set $T(a)=\left(\begin{matrix}
1&0\\a&1
\end{matrix}\right)$ for $a\in \F_q$, we see that there is an one to one correspondence between the subgroups of $\G_q$ and the $\F_p$-vector subspaces of $\F_q$ (regarded as an $s$-dimensional $\F_p$-vector space). In particular, any subgroup $H$ of $\G_q$ is of the form $\{T(a)\,|\, a\in S\}$ for some $\F_p$-vector space $S\subseteq \F_q$: if $H$ has order $p^r$, $S$ has dimension $r$. Let $S_0\subseteq \F_q$ be the $\F_p$-vector space associated to $G$. Therefore, an irreducible polynomial $f$ is $G$-invariant if and only if $[T(a)]\circ f=f$ for any $a\in S_0$. Given $a\in S_0$, from definition, $[T(a)]\circ f=f$ if and only if $T(a)\circ f=\lambda \cdot f$ for some $\lambda \in \F_q^*$. In other words, $f(x+a)=\lambda\cdot f(x)$. A comparison on the leading coefficient on both sides of the last equality yields $\lambda=1$. Therefore, $f$ is $G$-invariant if and only if $f(x+a)=f(x)$ for any $a\in S_0$, where $S_0$ is an $\F_p$-vector space of dimension $r\ge 2$. According to Theorem 2.7 of~\cite{LR17}, there do not exist irreducible polynomials with this property.
\end{proof}

Recall that Theorem~\ref{thm:ST12-4.2} shows that the $[A]$-invariants correspond to the irreducible divisors of the polynomials $F_{A, r}$ that were previously introduced (see Definition~\ref{def:mobius-action}). Considering a noncyclic subgroup $G$ of $\PGL_2(\F_q)$, a polynomial $f$ that is $G$-invariant must satisfy $[A]\circ f=[B]\circ f=f$ for some $[A]$ and $[B]$ that are not power of each other. In particular, $f$ divides $F_{A, r}$ and $F_{B, s}$ for some $s, r\ge 0$. If $s=r$, the following lemma shows that the polynomial $f$ is very particular.

\begin{lemma}\label{lem:aux-divisor}
Let $r$ be a non-negative integer and $A_1, A_2\in \GL_2(\F_q)$ such that $[A_1]\ne [A_2]$ in $\PGL_2(\F_q)$. If $f\in\F_q[x]$ is an irreducible polynomial that divides $F_{A_1, r}$ and $F_{A_2, r}$, then $f$ has degree at most $2$.
\end{lemma}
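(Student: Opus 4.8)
The plan is to exploit the very rigid structure of the polynomials $F_{A,r}(x) = bx^{q^r+1} - ax^{q^r} + dx - c$, which have degree at most $q^r+1$, and to turn the hypothesis ``$f \mid F_{A_1,r}$ and $f \mid F_{A_2,r}$'' into a divisibility by a \emph{common} low-degree polynomial. First I would write $A_i = \left(\begin{matrix} a_i & b_i \\ c_i & d_i\end{matrix}\right)$ and observe that $f$ divides any $\F_q$-linear combination $\mu_1 F_{A_1,r} + \mu_2 F_{A_2,r}$, which equals $(\mu_1 b_1 + \mu_2 b_2)x^{q^r+1} - (\mu_1 a_1 + \mu_2 a_2)x^{q^r} + (\mu_1 d_1 + \mu_2 d_2)x - (\mu_1 c_1 + \mu_2 c_2)$. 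The key point is that this is again of the form $F_{B,r}$ (up to scalar) for the matrix $B = \mu_1 A_1 + \mu_2 A_2$, as long as $B \ne 0$. Since $[A_1] \ne [A_2]$ in $\PGL_2(\F_q)$, the matrices $A_1, A_2$ are linearly independent over $\F_q$, so every nontrivial combination $B = \mu_1 A_1 + \mu_2 A_2$ is a nonzero matrix; moreover I can choose $(\mu_1,\mu_2)$ so that $B$ is \emph{singular} (the condition $\det(\mu_1 A_1 + \mu_2 A_2) = 0$ is a nonzero homogeneous quadratic in $(\mu_1,\mu_2)$ unless it vanishes identically, and in any case it has a nonzero solution over $\overline{\F_q}$; I should check it can be taken over $\F_q$, or argue via the two independent "partial" combinations below).

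Next I would extract a low-degree multiple of $f$ directly. Because $A_1, A_2$ are linearly independent, the $2\times 4$ matrix with rows $(a_1,b_1,c_1,d_1)$ and $(a_2,b_2,c_2,d_2)$ has rank $2$, so I can find a nonzero combination killing the two ``top'' coefficients: $\mu_1 b_1 + \mu_2 b_2 = 0$ and $\mu_1 a_1 + \mu_2 a_2 = 0$ would force $(\mu_1,\mu_2)=0$ only if the first two columns are independent — this may fail. So instead I argue by cases on which $2\times 2$ minors are nonzero. In the generic case where the columns indexed by $\{a,b\}$ are dependent but $\{c,d\}$ independent (or symmetric variants), one combination yields $f \mid \alpha x^{q^r+1} + \beta x^{q^r}$ with $(\alpha,\beta)\ne(0,0)$, i.e. $f \mid x^{q^r}(\alpha x + \beta)$; since $f$ is irreducible and (if $\deg f \ge 2$) coprime to $x$ and to $\alpha x+\beta$, this forces $\deg f \le 1$. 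Another combination yields $f \mid \gamma x + \delta$ similarly. The remaining, truly two-dimensional case is where I genuinely get a nonzero $F_{B,r}$ with $B$ of rank $\le 1$: then $F_{B,r}(x) = bx^{q^r+1}-ax^{q^r}+dx-c$ factors as $(bx-a)(x^{q^r} + \text{const}) \cdot(\text{adjustment})$ — more precisely, when $B$ is singular, $\det B = ad - bc = 0$, and one checks $F_{B,r}(x) = b x^{q^r}(x - a/b) + d(x - c/d)$ with $a/b = c/d$, so $F_{B,r}(x) = (x - a/b)(bx^{q^r} + d)$ (when $b \ne 0$; the $b=0$ case is handled separately and is even easier). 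Hence $f$ divides $(x-a/b)(bx^{q^r}+d)$, and again irreducibility of $f$ with $\deg f \ge 2$ is incompatible with $f \mid x - a/b$, while $f \mid bx^{q^r}+d = b(x^{q^r} - (-d/b))$ forces, by comparing with the factorization of $x^{q^r} - e$ over $\F_q$, that $\deg f \le \ldots$ — in fact $x^{q^r}-e$ is either $(x-e')^{q^r}$ in characteristic dividing things or splits into linear and degree-dividing-$r$ factors; this is where I must be careful.

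Let me streamline: the cleanest route is to show $f$ divides a nonzero polynomial of the shape $(x-\alpha)(bx^{q^r}+d)$ or of the shape $x^{q^r}(\alpha x+\beta)$ or $\gamma x+\delta$, in every case. If $\deg f \ge 3$, then $f \nmid (x-\alpha)$ and $f \nmid (\gamma x + \delta)$ and $f \nmid x$, so we are reduced to $f \mid bx^{q^r}+d$ with $b \ne 0$, i.e. $f \mid x^{q^r} - e$ where $e = -d/b \in \F_q$. Now $x^{q^r} - e$: if $e = 0$ then $f \mid x$, impossible; if $e \ne 0$, let $j$ be the order of $e$ in $\F_q^*$ and note $x^{q^r}-e$ divides $x^{q^r \cdot \mathrm{something}} - 1$; more directly, every root $\theta$ of $x^{q^r}-e$ satisfies $\theta^{q^r} = e \in \F_q$, hence $\theta^{q^{2r}} = e^{q^r} = e = \theta^{q^r}$, so $\theta \in \F_{q^{2r}}$, giving $\deg f \mid 2r$. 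That bounds the degree but not by $2$, so the ``$s = r$'' hypothesis alone would not suffice without the other structural input — this tells me the real proof must also use that $f$ being $[A_i]$-invariant forces the divisor $F_{A_i,r}$ to be taken with the \emph{specific} $r$ from Lemma~\ref{ST4.5}, or must combine the two singular-reduction polynomials together. The main obstacle, then, is precisely closing this last case: I expect to resolve it by noting that I actually obtain \emph{two} independent relations (from the two-dimensional space of combinations), forcing $f$ to divide the gcd of two polynomials of the form $(x-\alpha_i)(b_i x^{q^r} + d_i)$ with distinct data, and a short argument on common roots (a common root $\theta$ would satisfy $\theta^{q^r} = e_1 = e_2$ and also an independent linear constraint, pinning $\theta$ into $\F_{q^2}$ or making the gcd linear) then yields $\deg f \le 2$. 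I would carry out the case analysis on the rank-pattern of the $2\times 4$ coefficient matrix, in each branch produce the explicit small-degree multiple of $f$, and invoke irreducibility; the delicate branch is the one where both reduction polynomials are of the ``$(x-\alpha)(bx^{q^r}+d)$'' type, handled by intersecting their root sets.
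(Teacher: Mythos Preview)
Your linear-combination idea is in the right spirit, but the place you identify as the ``main obstacle'' is based on a miscalculation, and the whole route is far more involved than necessary. Over $\F_q$ one always has $e^{q^r}=e$ for $e\in\F_q$, hence $x^{q^r}-e = x^{q^r}-e^{q^r}=(x-e)^{q^r}$; so in your singular-combination branch, $f\mid bx^{q^r}+d$ with $b\ne 0$ already forces $f$ to be linear, and there is nothing delicate to resolve. Your other worry, that a \emph{singular} $\F_q$-combination $\mu_1A_1+\mu_2A_2$ need not exist, is legitimate (the homogeneous quadratic $\det(\mu_1A_1+\mu_2A_2)=0$ can be anisotropic over $\F_q$), and you never actually carry out the fallback via ``partial combinations'' that would be needed in that case. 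So as written the argument is a sketch with one wrong turn and one genuine gap.

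The paper sidesteps the entire case analysis by working with a root. Assume $\deg f\ge 3$ and let $\alpha$ be a root of $f$; then $\alpha\notin\F_q$, and each equation $F_{A_i,r}(\alpha)=0$ can be solved for $\alpha^{q^r}$ as a M\"obius expression in $\alpha$, namely $\alpha^{q^r}=\dfrac{d_i\alpha-c_i}{-b_i\alpha+a_i}$ (the denominator is nonzero precisely because $\alpha\notin\F_q$). Equating the two expressions and cross-multiplying yields a single polynomial relation of degree at most $2$ in $\alpha$ with coefficients in $\F_q$; since $[\F_q(\alpha):\F_q]\ge 3$, all three coefficients must vanish, and a short direct computation with those three vanishing conditions forces $A_1=\lambda A_2$ for some $\lambda\in\F_q^*$, contradicting $[A_1]\ne[A_2]$. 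This is exactly the elimination your linear combinations are groping toward, but done in one stroke with no branching.
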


\begin{proof}
Write $A_i=\left( \begin{array}{cc}a_i&b_i\\ c_i&d_i\end{array}\right)$, hence $F_{A_i, r}(x)=b_ix^{q^r+1}-a_ix^{q^r}+d_ix-c_i$ for $i=1,2$. Let $\alpha\in \overline{\F}_q$ be a root of $f(x)$. Suppose that $f$ has degree at least $3$. Therefore, $\alpha\not \in \F_q$ and so 
$$\alpha^{q^r}=\frac{d_1\alpha-c_1}{b_1\alpha-a_1}=\frac{d_2\alpha-c_2}{b_2\alpha-a_2}.$$
In particular, $(d_1\alpha-c_1)(b_2\alpha-a_2)=(d_2\alpha-c_2)(b_1\alpha-a_1)$ or, equivalently, $$(d_1b_2-d_2b_1)\alpha^2-(c_1b_2-c_2b_1+a_2d_1-a_1d_2)\alpha+(a_2c_1-a_1c_2)=0.$$
Since $f$ has degree at least $3$, $\alpha$ cannot satisfy a nontrivial polynomial identity of degree at most $2$ and so we conclude that 
$$d_1b_2-d_2b_1=a_2c_1-a_1c_2=c_1b_2-c_2b_1+a_2d_1-a_1d_2=0.$$
From $d_1b_2-d_2b_1=a_2c_1-a_1c_2=0$, we conclude that $(a_1, c_1)=\lambda (a_2, c_2)$ and $(b_2, d_2)=\lambda'(b_1, d_1)$ for some $\lambda, \lambda'\in \F_q$. From $c_1b_2-c_2b_1+a_2d_1-a_1d_2=0$, we obtain $$0=\lambda\lambda'b_1c_2-c_2b_1+a_2d_1-\lambda\lambda'a_2d_1=(\lambda\lambda'-1)(c_2b_1-a_2d_1).$$

\noindent If $\lambda\lambda'=1$, it follows that $A_1=\lambda A_2$ and so $[A_1]=[A_2]$. Otherwise, $c_2b_1-a_2d_1=0$ and so $(a_2, c_2)=\lambda'' (b_1, d_1)$ for some $\lambda''\in \F_q$. Therefore, $(a_1, c_1)=\lambda (a_2, c_2)=\lambda\lambda''(b_1, d_1)$, a contradiction since $A_1\in \GL_2(\F_q)$.
\end{proof}

Of course if $f$ is fixed by $[A]$ and $[B]$, $f$ certainly divides $F_{A, r}$ and $F_{B, s}$ for some $r, s\ge 0$ and the previous lemma shows that, for $r=s$, unless $[A]=[B]$, the polynomial $f$ is quadratic. However, we cannot be sure that we indeed have $r=s$. Nevertheless, the following argument can be used: if $f$ is $[B]$-invariant, $[B]\circ f=f$ and so $B\circ f$ equals $f$ times a constant. Since $f$ divides $F_{A, r}$, from Lemma~\ref{lem:aux-mobius-action}, $B\circ f$ divides $B\circ F_{A, r}$ and then $f$ divides $B\circ F_{A, r}$. We shall see that $B\circ F_{A, r}$ divides a polynomial of the form $sx^{q^r+1}-tx^{q^r}+ux-v$ for suitable $s, t, u$ and $v$ in $\F_q$ that are obtained from the elements $A$ and $B$. In particular, the elements $s, t, u$ and $v$ can be associated to a $2\times 2$ matrix with entries in $\F_q$. This idea is summarized as follows.
\begin{define}\label{def:sigma-product}
For $A=\left( \begin{array}{cc}a&b\\ c&d\end{array}\right)$ and $A_0=\left( \begin{array}{cc}a_0&b_0\\ c_0&d_0\end{array}\right)$ in $\GL_2(\F_q)$, the matrix
\[ \sigma(A, A_0)=\left( \begin{array}{cc}\sigma_1(A, A_0)&\sigma_2(A, A_0)\\ \sigma_3(A, A_0)&\sigma_4(A, A_0)\end{array}\right),\]
where
$$\begin{cases}
\sigma_1(A, A_0)=&-(ba_0c_0-aa_0d_0+dc_0b_0-cb_0d_0)\\
\sigma_2(A, A_0)=&ba_0^2-aa_0b_0+da_0b_0-cb_0^2\\
\sigma_3(A, A_0)=&-(bc_0^2-ac_0d_0+dd_0c_0-cd_0^2)\\
\sigma_4(A,A_0)=&ba_0c_0-ac_0b_0+dd_0a_0-cb_0d_0,
\end{cases}$$
is the \emph{$\sigma$-product} of $A$ and $A_0$.
\end{define}

\begin{remark}
The further computations of $\sigma$-products of elements in $\GL_2(\F_q)$ are performed using the \emph{Sage Software}. In general, the $\sigma$-products are taken over matrices whose entries are $0, 1$ or generic elements in a finite field. For this reason, the $\sigma$-products are further computed in the (commutative) ring of eight variables over the rationals: we define $$T(a, b, c, d, a_0, b_0, c_0, d_0)=(\sigma_1, \sigma_2, \sigma_3, \sigma_4),$$
where the functions $\sigma_i$ are given as in Definition~\ref{def:sigma-product}.\end{remark}

In the following proposition, we provide some basic properties of the $\sigma$-product in $\GL_2(\F_q)$ and its direct connection to the polynomials $F_{A, r}$.

\begin{prop}\label{prop:sigma}
For $A, A_0\in \GL_2(\F_q)$, set $\lambda=\det(A)$ and $\lambda_0=\det(A_0)$. The following hold:

\begin{enumerate}[(i)]
\item $\det(\sigma(A, A_0))=\lambda\lambda_0^2$ and, in particular, $\sigma(A, A_0)\in \GL_2(\F_q)$,

\item for any integer $r\ge 0$, $A_0\circ F_{A, r}$ divides $F_{\sigma(A, A_0), r}$,
\end{enumerate}
In particular, if $f\in \F_q[x]$ is an irreducible polynomial of degree at least $3$ such that $$[A]\circ f=[A_0]\circ f=f,$$ then $\sigma(A, A_0)=\varepsilon\lambda_0A$ for some $\varepsilon\in \{\pm 1\}$.
\end{prop}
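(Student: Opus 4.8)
The plan is to establish (i) and (ii) by direct computation and then deduce the ``in particular'' statement by feeding them into Theorem~\ref{thm:ST12-4.2} and Lemma~\ref{lem:aux-divisor}. For item (i), I would expand $\sigma_1(A,A_0)\sigma_4(A,A_0)-\sigma_2(A,A_0)\sigma_3(A,A_0)$ as a polynomial in the eight indeterminates $a,b,c,d,a_0,b_0,c_0,d_0$ and verify the identity $\det(\sigma(A,A_0))=(ad-bc)(a_0d_0-b_0c_0)^2=\lambda\lambda_0^2$; this is precisely the kind of symbolic identity the Remark above delegates to Sage. Since $A,A_0\in\GL_2(\F_q)$ we have $\lambda\ne 0$ and $\lambda_0\ne 0$, whence $\det(\sigma(A,A_0))\ne 0$ and $\sigma(A,A_0)\in\GL_2(\F_q)$.

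For item (ii), the key point is the $\F_q$-linearity of the Frobenius map $x\mapsto x^{q^r}$: since $a_0,b_0,c_0,d_0\in\F_q$, one has $(a_0x+c_0)^{q^r}=a_0x^{q^r}+c_0$ and $(b_0x+d_0)^{q^r}=b_0x^{q^r}+d_0$. Writing $k=\deg F_{A,r}\le q^r+1$, I would consider
\[
G(x):=(b_0x+d_0)^{q^r+1}\,F_{A,r}\!\left(\frac{a_0x+c_0}{b_0x+d_0}\right).
\]
On one hand, $G(x)=(b_0x+d_0)^{q^r+1-k}\cdot(A_0\circ F_{A,r})(x)$ as an identity in $\F_q[x]$ (the exponent $q^r+1-k$ is nonnegative), so $A_0\circ F_{A,r}$ divides $G$. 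On the other hand, substituting $F_{A,r}(y)=by^{q^r+1}-ay^{q^r}+dy-c$, clearing the denominator, and using the two Frobenius identities, $G$ becomes a polynomial in $x$ and $x^{q^r}$; reading off the coefficients of $x^{q^r+1}$, $x^{q^r}$, $x$ and $1$ yields exactly $\sigma_2$, $-\sigma_1$, $\sigma_4$ and $-\sigma_3$, i.e. $G=F_{\sigma(A,A_0),r}$. Combining these, $A_0\circ F_{A,r}$ divides $F_{\sigma(A,A_0),r}$.

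For the ``in particular'' assertion, assume $f$ is irreducible of degree at least $3$ with $[A]\circ f=[A_0]\circ f=f$. By Theorem~\ref{thm:ST12-4.2}, $f$ divides $F_{A,r}$ for some $r\ge 0$ (necessarily $r\ge 1$, since $F_{A,0}$ has degree at most $2$). From $[A_0]\circ f=f$ the polynomial $A_0\circ f$ is a scalar multiple of $f$; applying item (iii) of Lemma~\ref{lem:propertiesAf} to $f\mid F_{A,r}$ gives $A_0\circ f\mid A_0\circ F_{A,r}$, hence $f\mid A_0\circ F_{A,r}$, and then $f\mid F_{\sigma(A,A_0),r}$ by (ii). Thus $f$ is an irreducible polynomial of degree at least $3$ dividing both $F_{A,r}$ and $F_{\sigma(A,A_0),r}$, with $A,\sigma(A,A_0)\in\GL_2(\F_q)$ by (i); Lemma~\ref{lem:aux-divisor} then forces $[A]=[\sigma(A,A_0)]$ in $\PGL_2(\F_q)$, i.e. $\sigma(A,A_0)=\mu A$ for some $\mu\in\F_q^*$. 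Taking determinants and invoking (i), $\mu^2\lambda=\lambda\lambda_0^2$, so $\mu^2=\lambda_0^2$ and $\mu=\varepsilon\lambda_0$ with $\varepsilon\in\{\pm1\}$, as claimed.

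The only delicate point is the degree bookkeeping in (ii): one must make sure the divisibility $A_0\circ F_{A,r}\mid F_{\sigma(A,A_0),r}$ survives the degenerate cases $\deg F_{A,r}<q^r+1$ (for instance $b=0$, or the leading coefficient of $A_0\circ F_{A,r}$ vanishing), which is exactly what the harmless auxiliary factor $(b_0x+d_0)^{q^r+1-k}$ absorbs. Everything else reduces to the two explicit polynomial identities, which are routine to check by computer algebra.
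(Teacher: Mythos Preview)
Your proposal is correct and follows essentially the same route as the paper: item (i) by direct (symbolic) computation, item (ii) by expanding $(b_0x+d_0)^{q^r+1}F_{A,r}\!\left(\frac{a_0x+c_0}{b_0x+d_0}\right)$ via the Frobenius identities and recognising $F_{\sigma(A,A_0),r}$, and the final statement by combining Theorem~\ref{thm:ST12-4.2}, Lemma~\ref{lem:propertiesAf}(iii), item (ii), Lemma~\ref{lem:aux-divisor}, and a determinant comparison. If anything, your handling of the degenerate degree case in (ii) via the auxiliary factor $(b_0x+d_0)^{q^r+1-k}$ is slightly more explicit than the paper's one-line remark.
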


\begin{proof}
Write $A=\left( \begin{array}{cc}a&b\\ c&d\end{array}\right)$ and $A_0=\left( \begin{array}{cc}a_0&b_0\\ c_0&d_0\end{array}\right)$. Item (i) follows by direct calculations. For item (ii), we observe that
$$F_{\sigma(A, A_0), r}=b(a_0x+c_0)^{q^r+1}-a(a_0x+c_0)^{q^r}(b_0x+d_0)+d(a_0x+c_0)(b_0x+d_0)^{q^r}-c(b_0x+d_0)^{q^r+1},$$
From definition, $F_{\sigma(A, A_0), r}$ equals $M\cdot (A_0\circ F_{A, r})$, where $M=1$ or $M=c_0x+d_0$, according to whether $c\ne 0$ or $c=0$, respectively. In particular, $A_0\circ F_{A, r}$ divides $F_{\sigma(A, A_0), r}$. 

We observe that, if $f$ has degree at least $3$ and satisfies $[A]\circ f=[A_0]\circ f=f$, then $f$ divides $F_{A, r}$ for some $r\ge 0$. Additionally, from the previous observations, $f$ also divides $A_0\circ F_{A, r}$. Since $A_0\circ F_{A, r}$ divides $F_{\sigma(A, A_0), r}$, it follows that $f$ also divides $F_{\sigma(A, A_0), r}$. From item (i), $\sigma(A, A_0)\in \GL_2(\F_q)$ and then, from Lemma~\ref{lem:aux-divisor}, we have that $[\sigma(A, A_0)]=[A]$, i.e., $\sigma(A, A_0)=\delta A$ for some $\delta\in \F_q^*$. Taking the determinant on both sides of the previous equality and using item (i), we conclude that $\delta=\pm \lambda_0$.
\end{proof}

\subsection{Proof of~Theorem~\ref{thm:chap6-main-3}}
For a group $G\le \PGL_2(\F_q)$, $G$ is of \emph{type} $t$, if $1\le t\le 4$ is the least positive integer such that $G$ contains an element $[A]\in \PGL_2(\F_q)$ of type $t$. We recommend the reader to recall the matrix-type notation in Theorem~\ref{thm:types}. We start with the following preliminary result. 

\begin{prop}\label{prop:order>4}
Suppose that $G\le \PGL_2(\F_q)$ is a non cyclic group of order at least $5$. Then no irreducible polynomial $f\in \F_q[x]$ of degree at least $3$ is $G$-invariant.
\end{prop}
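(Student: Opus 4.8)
The plan is to reduce the problem to a finite case analysis on the type of the noncyclic group $G$ and then exploit Proposition~\ref{prop:sigma} to derive a contradiction whenever an irreducible polynomial of degree at least $3$ is $G$-invariant. First I would observe that, since $G$ is noncyclic of order at least $5$, it contains two elements $[A], [A_0]$ that do not generate a common cyclic subgroup; in particular $[A_0]\ne [A]^k$ for all $k$, and (after possibly replacing generators) we may also assume $[A]\ne [I]\ne [A_0]$ and $[A]\ne[A_0]$. Suppose $f$ is $G$-invariant of degree $\ge 3$. Then $[A]\circ f=[A_0]\circ f=f$, so Proposition~\ref{prop:sigma} gives $\sigma(A,A_0)=\varepsilon\lambda_0 A$ with $\varepsilon\in\{\pm1\}$. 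Symmetrically, swapping roles yields $\sigma(A_0,A)=\varepsilon'\lambda A_0$; and for any further pair of elements of $G$ one gets an analogous identity. The strategy is: these rigid $\sigma$-product identities, together with the commutation/product relations forced by the group structure, are only satisfiable by pairs $[A],[A_0]$ that do generate a cyclic group, contradicting noncyclicity.

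The key steps, in order, would be: (1) Use Corollary~\ref{cor:conjugates-main} to conjugate $G$ so that one of its nonidentity elements is in reduced form; since conjugation preserves the existence of $G$-invariants of a given degree, it suffices to treat reduced-form representatives $A(a)$, $\mathcal E$, $C(b)$, $D(c)$. (2) For each type $t$ of the element $[A]$ we placed in reduced form, write a second generator $[A_0]$ of $G$ as a generic matrix $\left(\begin{smallmatrix}a_0&b_0\\c_0&d_0\end{smallmatrix}\right)$ and impose $\sigma(A,A_0)=\pm\lambda_0 A$ using Definition~\ref{def:sigma-product}; this is a small polynomial system in $a_0,b_0,c_0,d_0$ (the paper already flags that such $\sigma$-products were computed symbolically in Sage, so I would invoke that). (3) Solve the system and show its solution set consists exactly of matrices $A_0$ with $[A_0]$ lying in the cyclic group generated by $[A]$ (for types $1,3,4$ this cyclic group is the full torus/Singer-type maximal cyclic subgroup containing $[A]$), or forces $[A_0]=[A]$ or $[A_0]=[I]$. (4) Conclude that $\langle [A],[A_0]\rangle$ is cyclic, contradicting the noncyclicity of $G$, hence no $f$ of degree $\ge 3$ can be $G$-invariant.

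For the bookkeeping I would additionally note two simplifications. First, by the conjugacy result I can assume the \emph{first} generator is in reduced form, but I may also need to argue that $G$ contains an element of a convenient type; since $|G|\ge 5$ and $|\PGL_2(\F_q)|=q(q-1)(q+1)$, a noncyclic $G$ is large enough that a quick order/structure argument (using Lemma~\ref{lem:order} on the possible orders by type, and the classification of finite subgroups of $\PGL_2$, or more elementarily the fact that the cyclic subgroups by type have orders dividing $p$, $q-1$, or $q+1$) pins down which mixed-type pairs can occur. Second, the case where both generators happen to divide $F_{A,r}$ and $F_{A_0,r}$ with the \emph{same} exponent $r$ is already dispatched by Lemma~\ref{lem:aux-divisor} (it forces $\deg f\le 2$); the content of Proposition~\ref{prop:sigma} is precisely what lets me pass from possibly-different exponents $r,s$ to the common-exponent situation via the matrix $\sigma(A,A_0)$, so the whole argument funnels through that proposition.

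The main obstacle I anticipate is step (3): after imposing $\sigma(A,A_0)=\pm\lambda_0 A$, verifying that the solution locus is \emph{exactly} the centralizer-like cyclic subgroup of $[A]$ — no stray sporadic solutions — requires a careful type-by-type computation, and the type $2$ (unipotent, order $p$) case and small-characteristic corners (e.g. $q=2,3$, or $p=2$ where $\pm1$ coincide) are where extra care is needed. I expect these to be handled by a direct but slightly tedious case split, exactly the kind of computation the paper offloads to Sage; the nontrivial mathematical point is organizing the argument so that the $\sigma$-product identity plus one further group relation (a product or commutator of generators) always closes the noncyclicity contradiction.
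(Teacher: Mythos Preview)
Your overall architecture is the paper's: conjugate so that one element is in reduced form, impose the $\sigma$-product identity from Proposition~\ref{prop:sigma}, and case-split on the type. The gap is in your step~(3), and it is not a bookkeeping issue but the actual content of the proof.

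The claim that the $\sigma$-constraint forces $[A_0]$ into the maximal cyclic (torus/Singer) subgroup containing $[A]$ is \emph{false}. In the paper's computation (which places the reduced-form matrix in the \emph{second} slot, i.e., imposes $\sigma(A',A)=\varepsilon\det(A)\,A'$ and solves for the generic $A'$):
\begin{itemize}
\item Type $1$, branch $\varepsilon=-1$: the constraint admits anti-diagonal solutions $[A']=[C(b_0)]$ lying \emph{outside} the split torus. One then shows $G=\langle[A(-1)],[C(b_0)]\rangle$ is a Klein four-group, and it is exactly the hypothesis $|G|\ge 5$ that kills this branch.
\item Type $3$: the constraint yields two families $[E_s]$ and $[F_t]$; only the $[E_s]$ lie in the cyclic group of order $q+1$ containing $[A]$. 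A further $\sigma$-product computation (between an $E_s$ and an $F_t$) is needed to show the families cannot mix, and the pure-$[F_t]$ branch again collapses $G$ to order $4$.
\item Type $2$: the constraint does not give cyclicity at all. It only forces $A'$ into the unipotent group, producing a subgroup of order $p^2$, and one must invoke Theorem~\ref{thm:p-group} to finish. You do not mention this ingredient.
\end{itemize}
So $|G|\ge 5$ is not decorative: it is precisely what disposes of the order-$4$ exceptions that survive the $\sigma$-identity, and these are treated separately in Proposition~\ref{prop:order4}.

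A second, smaller omission: the paper does not pick an arbitrary non-power pair. It takes $[A]$ of the \emph{least} type $t$ occurring in $G$ and of \emph{maximal} order among elements of that type, and then any $[A']\in G\setminus\langle[A]\rangle$. Both refinements are used: maximality closes the type-$1$, $\varepsilon=1$ branch (forcing $\langle[A],[A']\rangle\subseteq\langle[A]\rangle$), and minimality of type is used in the type-$3$ analysis to exclude stray type-$1$ elements appearing as products. Your ``two elements not generating a common cyclic subgroup'' is not sharp enough to make these branches close.
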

\begin{proof}
Let $1\le t\le 4$ be the type of $G$ and let $[A]\in G$ be an element of type $t$ such that the order of $[A]$ is maximal through the elements of type $t$ in $G$. Since $G$ is not cyclic, there exists an element $[A']\in G\setminus \langle[A]\rangle$. Recall that, from Corollary~\ref{cor:conjugates-main}, there is a degree preserving one to one correspondence between the $G$-invariants and the $G'$-invariants, for any group $G'$ that is conjugated to $G$. Of course, conjugations in $\PGL_2(\F_q)$ preserves the order of elements. In particular, we can suppose that $A$ is in the reduced form. Write $A'=\left( \begin{array}{cc}a'&b'\\ c'&d'\end{array}\right)$.  Suppose that $f\in \F_q[x]$ is an irreducible polynomial of degree at least $3$ that is $G$-invariant. In particular, $[A]\circ f=[A']\circ f=f$ and then, from Proposition~\ref{prop:sigma}, it follows that
\begin{equation}\label{LD}\sigma(A', A)=\varepsilon \cdot \det (A)\cdot A',\end{equation}
for some $\varepsilon\in \{\pm 1\}$. We have four cases to consider:
\begin{enumerate}[(a)]
\item $t=1$; in this case, there exists $a\in \F_q\setminus\{0, 1\}$ such that $A=A(a)$, i.e., $A=\left( \begin{array}{cc}a&0\\ 0&1\end{array}\right)$. A simple calculation yields $$\sigma(A', A)=\left( \begin{array}{cc}aa'&a^2b'\\ c'&ad'\end{array}\right).$$ 
From Eq.~\eqref{LD}, we obtain $\sigma(A', A)=\varepsilon a A'$. We split into cases:
\begin{enumerate}[$\bullet$]
\item {\bf Case a-1.} $\varepsilon=1$. We get $c'(a-1)=b'(a^2-a)=0$. Since $a\ne 0, 1$, we get $c'=b'=0$, and then $[A']=[A(a_0)]$, where $a_0=a'/d'$. However, since $\F_q^*$ is cyclic, then so is the group generated by elements of the form $[A(s)], s\in \F_q^*$: if $\theta$ is any generator of $\F_q^*$, $[A(\theta)]$ generates the elements $[A(s)], s\in \F_q^*$. In particular, $\langle [A], [A']\rangle$ is a cyclic group generated by the element $[A(a_0')]\in G$, for some $a_0\in \F_q^*$. Since $\ord([A])$ is maximal through the elements of type 1 in $G$, we conclude that $\langle [A], [A']\rangle= \langle [A]\rangle$, a contradiction with $[A']\in G\setminus  \langle [A]\rangle$. 

\item {\bf Case a-2}. $\varepsilon=-1$. We have seen that $\varepsilon\ne 1$, hence $-1\ne 1$, that is, $q$ is odd. We obtain $$2aa'=2ad'=(a^2+a)b'=(1+a)c'=0.$$ Since $q$ is odd and $a\ne 0$, it follows that $a'=d'=0$, $b', c'\ne 0$ and so $1+a=a^2+a=0$, i.e., $a=-1$. Therefore, $[A]=[A(-1)]$ and $[A']=[C(b_0)]$ with $b_0=c'/b'$. We claim that, in this case, $G=\langle [A(-1)], [C(b_0)]\rangle$. In fact, for an element $[B]\in G\setminus \langle [A(-1)]\rangle$, we have seen that $[B]=[C(b_0')]$ for some $b_0'\ne 0$. Hence $[A(b_0'')]=[B]\cdot [A']\in G$, where $b_0''=b_0/b_0'$, is an element of type 1 in $G$. In the same way as before we conclude that $[A(b_0'')]=[B]\cdot [C(b_0)]$ belongs to the group generated by $[A]=[A(-1)]$, hence $[B]$ belongs to the group $\langle [A(-1)], [C(b_0)]\rangle$. This shows that $G=\langle [A(-1)], [C(b_0)]\rangle$. Clearly $$[A(-1)]^2=[C(b_0)]^2=[I]\quad\text{and}\quad [A(-1)]\cdot [C(b_0)]=[C(b_0)]\cdot [A(-1)].$$ Therefore, $G$ is group of order $4$, a contradiction with our hypothesis.
\end{enumerate}

\item  $t=2$; in this case, $A=\mathcal B=\left( \begin{array}{cc}1&0\\ 1&1\end{array}\right)$. We have that
$$\sigma(A', A)=\left( \begin{array}{cc}a'-b'&b'\\ -(b'+d'-a'-c')& b'+d'\end{array}\right).$$
From Eq.~\eqref{LD}, we obtain $\sigma(A', A)=\varepsilon A'$. We split into cases:
\begin{enumerate}[$\bullet$]
\item {\bf Case b-1.} $\varepsilon=-1$. We obtain $$2a'-b'=b'+d'-a'-2c'=2b'=2d'+b'=0.$$  If $q$ is odd, it follows that $a'=b'=c'=d'=0$, a contradiction. If $q$ is even we get $b'=0$ and $a'=d'$. Since $[A']\ne [A]$, we have $c'\ne a'$ and we can easily check that $H=\langle [A], [A']\rangle$ is a group of order $p^2$. From Theorem~\ref{thm:p-group}, it follows that there do not exist $H$-invariants. However, since $H\le G$, it follows that there do not exist $G$-invariants.

\item {\bf Case b-2.} $\varepsilon=1$. We obtain $b'=a'-d'-b'=0$, hence $b'=0$ and $a'=d'$. We proceed in the same way as in \emph{Case b-1}, and conclude that there do not exist $G$-invariants.

\end{enumerate}

\item $t=3$; in this case, $A=\left( \begin{array}{cc}0&1\\ b&0\end{array}\right)$ for some non square $b$ in $\F_q^*$. We have
$$\sigma(A', A)=\left( \begin{array}{cc}-bd'&-c'\\ -b^2b'& -ba'\end{array}\right).$$
From Eq. \eqref{LD} we obtain $\sigma(A', A)=\varepsilon bA'$, hence 

\begin{equation}\begin{aligned}\label{involutions} a'  &= -\varepsilon d' \\
 c' &= -\varepsilon bb'.\end{aligned}\end{equation}

If $d'=0$, then $a'=0$ and $[A']=[A]$ or $[A']=[A]\cdot [A(-1)]$. Since we have supposed that $[A']$ is not in the group generated by $[A]$, this implies $[A']=[A]\cdot [A(-1)]$ and then $[A(-1)]$ is in $G$, a contradiction since $G$ is of type $3$. Hence $d'$ (and then $a'$) is nonzero. Also, if $b'=0$, then $c'=0$ and $A'$ is either the identity or an element of type 1, a contradiction since $G$ is of type $3$ and $[A']$ is not in the group generated by $[A]$. In particular, the elements $a', b', c'$ and $d'$ are nonzero. Therefore, from Eq.~\eqref{involutions}, there exists $t\in \F_q^*$ such that
$$[A']=\left[\left( \begin{array}{cc}t&-\varepsilon\\ b&-\varepsilon t\end{array}\right)\right].$$

For each $s, t\in \F_q^*$, set $$E_s=\left( \begin{array}{cc}s&1\\ b&s\end{array}\right) \quad \text{and}\quad F_t=\left( \begin{array}{cc}t&-1\\ b&-t\end{array}\right).$$

We have shown that, if $\C_{G}^*\ne \emptyset$, then $G\setminus \langle[A]\rangle$ contains only elements of the forms $[E_s]$ and $[F_t]$. If there exist $s$ and $t$ nonzero elements of $\F_q$ such that $[E_s], [F_t]\in G$, then for $f\in \C_G^*$ we have
$[E_s]\circ f=[F_t]\circ f=f$. Therefore, from Proposition~\ref{prop:sigma}, we have that $\sigma(E_s, F_t)=\delta (b-t^2) E_s$ for some $\delta\in \{-1, 1\}$. A simple calculation yields
$$\sigma(E_s, F_t)=(t^2-b)\cdot \left( \begin{array}{cc}-s&1\\ b&-s\end{array}\right)=(t^2-b)\cdot E_{-s}.$$

In particular, we obtain $E_{-s}=\delta'\cdot E_{s}$ for some $\delta'\in \{-1, 1\}$. For $\delta'=-1$, it follows that $2b=0$ but, since $G$ is of type $3$, we are in odd characteristic and then $b=0$, a contradiction. Hence $\delta'=1$ and then $s=0$, a contradiction with the fact that $s$ is nonzero.

This shows that $G\setminus \langle[A]\rangle$ only contains elements of the form $[E_s]$ or only contains elements of the form $[F_t]$. We split into cases:

\begin{itemize}
\item {\bf Case c-1.} $G\setminus \langle[A]\rangle$ only contains elements of the form $[F_t]$. 

Let $u\ne 0$ be such that $[F_{u}]\in G\setminus \langle[A]\rangle$. We claim that, in this case, $G=\langle[A], [F_{u}]\rangle$. In fact, if there exists $v\ne 0$ such that $[F_v]\in G\setminus \langle[A], [F_u]\rangle$, then $[F_u]\cdot [F_v]\in G\setminus \langle[A]\rangle$. A direct calculation yields

$$[F_u]\cdot [F_v]=[E_{\ell}],$$

where $\ell=\frac{uv-b}{v-u}$. But we have seen that $G\setminus \langle[A]\rangle$ cannot contain elements of both forms $[E_s]$ and $[F_t]$ with $s, t\ne 0$. This implies $\ell=0$, and then $[E_{\ell}]=[A]$, i.e., $[F_v]\in \langle[A], [F_u]\rangle$, a contradiction. 
Hence $G=\langle[A], [F_{u}]\rangle$. Clearly $[F_u]^2=[A]^2=I$ and $[F_u]\cdot [A]=[A]\cdot [F_u]$, and then $G$ is group of order $4$, a contradiction with our hypothesis.

\item {\bf Case c-2.} $G\setminus \langle[A]\rangle$ only contains elements of the form $[E_s]$. 

Notice that, extending our definition of $[E_s]$ to $s=0$, we get $[A]=[E_0]$. Also, we observe that $E_s=A+sI$, $A^2=bI$ and

$$[E_s]\cdot [E_t]=\begin{cases}[I]&\text{if}\;{s+t=0,}\\ 

[A_{f(s, t)}], f(s, t)=\frac{st+b}{s+t}&\text{otherwise}.\end{cases}$$

In particular, $H=\{[E_s], s\in \F_q\}\cup \{[I]\}$ is a group of order $q+1$ and $G\le H$. But, since $b$ is not a square in $\F_q$, $x^2-b\in \F_q[x]$ is irreducible and $K=\F_q[x]/(x^2-b)$ is the finite field with $q^2$ elements.. Let $k\alpha+w$ be a primitive element of $K=\F_{q^2}$, where $\alpha$ is a root of $x^2-b$ and $k, w\in \F_q$ with $k\ne 0$. We claim that, in this case, $H$ is the cyclic group generated by $[E_{w/k}]$. In fact, we have $[E_{w/k}]\in H$ and, for any $r\in \F_q$, we observe that $\alpha+r=(a\alpha+b)^m$ for some $m\in \N$. Therefore, $$(kx+w)^m\equiv x+r\pmod{x^2-b},$$ and then $(kA+wI)^r=A+rI=E_r$. Taking equivalence classes in $\PGL_2(\F_q)$ we obtain $[E_{w/k}]^m=[E_r]$, hence $H$ is cyclic. Since $G$ is a subgroup of $H$, it follows that $G$ is also cyclic and we get a contradiction.

\end{itemize}

\item $t=4$; in this case, $A=\left( \begin{array}{cc}0&1\\ c&1\end{array}\right)$ for some $c\in\F_q^*$ such that $x^2-x-c\in \F_q[x]$ is irreducible. We obtain
$$\sigma(A', A)=\left( \begin{array}{cc}c'-cd'&-c'\\ c'+ca'-cd'-c^2b'& -c'-ca'\end{array}\right).$$
From Eq.~\eqref{LD}, we obtain $\sigma(A', A)=\varepsilon cA'$ for some $\varepsilon\in \{\pm 1\}$. Therefore, $$\varepsilon ca'+cd'-c'=\varepsilon cb'+c'=\varepsilon cc'-c'-ca'+cd'+c^2b'=\varepsilon cd'+c'+ca'=0,$$
and, from $c'=-\varepsilon cb'$ and $c\ne 0$, it follows that
$$\varepsilon a'+d'+\varepsilon b'=-a'+d'+\varepsilon b'=\varepsilon d'-\varepsilon b'+a'=0.$$

From $\varepsilon a'+d'+\varepsilon b'=\varepsilon d'-\varepsilon b'+a'=0$ and $\varepsilon^2=1$, we conclude that $a'+b'+\varepsilon d'=d'+\varepsilon a'-b'$ and so $2b'=(\varepsilon -1)(a'-d')$. From $-a'+d'+\varepsilon b'=0$, we obtain $a'-d'=\varepsilon b'$ and so
$$2b'=(\varepsilon -1)(a'-d')=(\varepsilon -1)\varepsilon b'=(\varepsilon ^2-\varepsilon)b'.$$

We observe that $b'\ne 0$. For if $b'=0$, we have that $c'=0$ and $a'=d'$, hence $[A']$ is the identity element of $\PGL_2(\F_q)$, a contradiction with $[A']\in G\setminus \langle [A]\rangle$. If $q$ is even, $-1=1$ and so we have $\varepsilon=-1$. If $q$ is odd, since $b'\ne 0$ and $2b'=(\varepsilon ^2-\varepsilon)b'$, it follows that $\varepsilon^2-\varepsilon=2$ and so $\varepsilon=-1$. In particular, we have shown that $\varepsilon=-1$ and $b'\ne 0$. From the equalities $c'=-\varepsilon cb'=cb'$ and $a'-d'=\varepsilon b'=-b'$, it follows that $b'=t$, $c'=ct$, $a'=u$ and $d'=a'+b'=t+u$ for some $t\in \F_q^*$ and $u\in \F_q$. Therefore, there exists $s \in \F_q$ such that
$$[A']=\left[\left( \begin{array}{cc}s&1\\ c&s+1\end{array}\right)\right].$$
For $s\in \F_q$, set $A_s:=\left(\begin{array}{cc}s&1\\ c&s+1\end{array}\right)=A+sI$. In particular, we have shown that the elements of $G\setminus \langle [A]\rangle$ are of the form $[A_s]$ for some $s\in \F_q$ (in fact, $[A]=[A_0]$). We observe that $A^2=A+cI$ and then, for any $s, t\in \F_q$, the following holds:
$$[A_s]\cdot [A_t]=\begin{cases}[I]&\text{if}\;{s+t+1=0,}\\ 
[A_{g(s, t)}], g(s, t)=\frac{st+c}{s+t+1}&\text{otherwise}.\end{cases}$$

Therefore, the set $H=\{[A_s]\,;\, s\in \F_q\}\cup \{[I]\}$ is a group of order $q+1$ and $G\le H$. However, since $x^2-x-c\in \F_q[x]$ is irreducible, $K=\F_q[x]/(x^2-x-c)$ is the finite field with $q^2$ elements. Let $a\alpha+b$ be a primitive element of $K=\F_{q^2}$, where $\alpha$ is a root of $x^2-x-c$ and $a, b\in \F_q$ with $a\ne 0$. We claim that, in this case, $H$ is the cyclic group generated by $[A_{b/a}]$. In fact, we have $[A_{b/a}]\in H$ and, for any $s\in \F_q$, $\alpha+s=(a\alpha+b)^r$ for some $r\in \N$. Hence $$(ax+b)^r\equiv x+s\pmod{x^2-x-c},$$ and then $(aA+bI)^r=A+sI=A_s$. Taking equivalence classes in $\PGL_2(\F_q)$ we obtain $[A_{b/a}]^r=[A_s]$, hence $H$ is cyclic. Since $G$ is a subgroup of $H$, it follows that $G$ is also cyclic and we get a contradiction. 

\end{enumerate}
\end{proof}
For groups of order $4$, we use a simpler argument.
\begin{prop}\label{prop:order4}
Suppose that $G$ is a noncyclic subgroup of $\PGL_2(\F_q)$ of order $4$. Then no irreducible polynomial of degree at least $3$ is $G$-invariant.
\end{prop}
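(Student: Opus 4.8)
The plan is to reduce to the case of a subgroup of a reduced-form matrix and then exhaust the small number of noncyclic groups of order $4$ in $\PGL_2(\F_q)$. A noncyclic group of order $4$ is a Klein four-group $V = \{[I], [A], [B], [AB]\}$ in which every nonidentity element is an involution. By Corollary~\ref{cor:conjugates-main} we may conjugate $G$ so that one chosen involution, say $[A]$, is in reduced form. An involution $[A]$ with $[A]\ne[I]$ cannot be of type $2$ (its order would be $p$, and $p=2$ forces $\F_q$ to have characteristic $2$, but an order-$2$ element of type $2$ would still need $p=2$; in characteristic $2$ a reduced-form type-$2$ matrix is $\mathcal E$, whose order is $2$, so this case does survive and must be handled) and cannot be of type $4$ (order $>2$ by Lemma~\ref{lem:order}). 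So $[A]$ is of type $1$, type $3$, or — only in characteristic $2$ — type $2$. By Theorem~\ref{thm:types} and Lemma~\ref{lem:order}, a type-$1$ involution in reduced form is $[A(a)]$ with $a$ of multiplicative order $2$, i.e.\ $a=-1$ (so $q$ is odd); a type-$3$ involution is $[C(b)]$ with $b$ a nonsquare; and the type-$2$ case is $[\mathcal E]$ with $q$ even.

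Next I would feed $[A]$ in each of these reduced forms into the machinery already developed in the proof of Proposition~\ref{prop:order>4}. Suppose $f$ is an irreducible $G$-invariant of degree at least $3$; then $[A]\circ f = [A']\circ f = f$ for any $[A']\in G\setminus\langle[A]\rangle$, and Proposition~\ref{prop:sigma} gives $\sigma(A', A) = \varepsilon\det(A)\, A'$ for some $\varepsilon\in\{\pm1\}$. This is exactly Eq.~\eqref{LD}, and the case analysis carried out in Proposition~\ref{prop:order>4} for $t=1$ (Case a), $t=2$ (Case b), and $t=3$ (Case c) already determines all possibilities for $[A']$: in Case a-2 one is forced to $[A]=[A(-1)]$, $[A']=[C(b_0)]$, and $G$ is precisely the Klein four-group $\langle [A(-1)],[C(b_0)]\rangle$; in Case b one is forced into a group of order $p^2$, which by Theorem~\ref{thm:p-group} admits no invariants at all; in Case c one is forced either into a group of order $4$ of the form $\langle [A],[F_u]\rangle$ or into a cyclic group. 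The only genuinely new work is to verify directly that in the surviving order-$4$ configurations — namely $G=\langle[A(-1)],[C(b_0)]\rangle$, $G=\langle \mathcal E, \mathcal E'\rangle$ of order $p^2=4$, and $G=\langle C(b),[F_u]\rangle$ — there is in fact no invariant of degree $\ge 3$.

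For the type-$2$ (characteristic $2$) group this is immediate: a noncyclic group of order $4$ with $p=2$ has order $p^2$, so Theorem~\ref{thm:p-group} applies verbatim. For the remaining two cases I would exploit that both involutions fix $f$ simultaneously and run the $\sigma$-product relation once more, now using the \emph{third} element $[AB]$ of the Klein group: since $[AB]\circ f = f$ as well, Proposition~\ref{prop:sigma} with the pair $(A,B)$ (and also $(B,A)$, $(AB,A)$, etc.) yields additional constraints $\sigma(B,A)=\pm\det(A)B$, and combining these with the explicit reduced forms pins the entries down so tightly that no configuration admitting a degree-$\ge 3$ invariant survives — precisely as in the computations $\sigma(E_s,F_t)=(t^2-b)E_{-s}$ already done in Case c-1 of Proposition~\ref{prop:order>4}, which shows $G\setminus\langle[A]\rangle$ cannot simultaneously contain the relevant pairs. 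I expect the main obstacle to be purely bookkeeping: making sure the three reduced-form involution types are genuinely the only ones, and checking that the $\sigma$-product relations for the specific order-$4$ groups are the same computations already performed in Proposition~\ref{prop:order>4} rather than new ones — in other words, the content is almost entirely a matter of organizing the earlier case analysis, with the characteristic-$2$ subcase being the one place where a separate (but trivial) appeal to Theorem~\ref{thm:p-group} is needed.
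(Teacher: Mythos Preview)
Your approach is substantially more complicated than the paper's, and it has a real gap at the end. The $\sigma$-product machinery of Proposition~\ref{prop:order>4} was used precisely to show that, \emph{assuming} a degree-$\ge 3$ invariant exists, the group $G$ must be one of the specific order-$4$ configurations you list (Case a-2, Case c-1) or else cyclic. Those configurations are the \emph{output} of the $\sigma$-product constraints, not something those constraints exclude; that is why Proposition~\ref{prop:order>4} had to assume $|G|\ge 5$. So when you propose to ``run the $\sigma$-product relation once more'' on the third element $[AB]$, you are feeding the machinery inputs it has already digested: $[AB]$ is again an involution of one of the allowed forms, and the resulting constraint $\sigma(AB,A)=\pm\det(A)\,AB$ will be automatically satisfied in these groups. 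You would need a genuinely new idea to finish, and the proposal does not supply one.

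The paper's argument is entirely different and much shorter. The key observation you are missing is that when $[A_1]$ and $[A_2]$ both have order $D=2$, Lemma~\ref{ST4.5}(i) pins down the parameter $r$ uniquely: the only $\ell\le D-1$ with $\gcd(\ell,D)=1$ is $\ell=1$, so an invariant $f$ of degree $2m$ must divide $F_{A_1,m}$ \emph{and} $F_{A_2,m}$ with the \emph{same} subscript $m$. Then Lemma~\ref{lem:aux-divisor} applies directly and forces $[A_1]=[A_2]$, contradicting noncyclicity. No conjugation to reduced form, no type analysis, and no $\sigma$-products are needed.
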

\begin{proof}
It is straightforward to check that any noncyclic group of order $4$ is generated by $2$ elements whose orders equal to $2$. Let $[A_1]$ and $[A_2]$ be elements of $\PGL_2(\F_q)$ such that $G=\langle [A_1], [A_2] \rangle$. Suppose that $f$ is a polynomial of degree at least $n\ge 3$ such that $[A_1]\circ f=[A_2]\circ f=f$. From Theorem~\ref{thm:ST12-3.3}, $n$ is divisible by $2$, i.e., $n=2m$ for some positive integer $m\ge 2$. Also, from Lemma~\ref{ST4.5}, it follows that $f$ divides both $F_{A_1, m}$ and $F_{A_2, m}$. However, from Lemma~\ref{lem:aux-divisor}, this implies that $[A_1]=[A_2]$, a contradiction with the fact that $G$ is not cyclic.
\end{proof}

It is straightforward to see that Theorem~\ref{thm:chap6-main-3} follows from Propositions~\ref{prop:order>4} and~\ref{prop:order4}. 

\subsection{A remark on quadratic invariants}
We observe that, if $f\in \F_q[x]$ is a quadratic irreducible polynomial and $H$ is a subgroup of $\PGL_2(\F_q)$ generated by elements $[B_1], \ldots, [B_s]$, $f$ is $H$-invariant if and only if $B_i\circ f=\lambda_i \cdot f$ for some $\lambda_i\in \F_q$; if we write $f=x^2+ax+b$, the equalities $B_i\circ f=\lambda_i\cdot f$ yield a system of $s$ polynomial equations, where $a, b$ and the $\lambda_i$'s are variables. The coefficients of these equations arise from the entries of the elements $B_i$. Given the elements $B_i$, we can easily solve this system.

\section{Rational functions and $[A]$-invariants}
In~\cite{MP17}, the authors show that the irreducible polynomials of degree $2n$ fixed by some involutions $[A]\in \PGL_2(\F_q)$ (i.e.$[A]^2=[I]$) arise from polynomials of the form $h^nf\left(\frac{g}{h}\right)$, where $f$ is irreducible of degree $n$ and $g, h$ are relatively prime polynomials such that $\max(\deg g, \deg h)=2$. They called $h^nf\left(\frac{g}{h}\right)$ as the \emph{quadratic transformation} of $f$ by $\frac{g}{h}$. This result generalizes the characterization $f(x)=x^ng\left(x+x^{-1}\right)$ for irreducible \emph{self-reciprocal polynomials} of even degree. In the same paper, the authors also explore some very particular elements $[A]\in \PGL_2(\F_q)$ of orders $3$ and $4$, where some \emph{cubic} and \emph{quartic} transformations appear. The aim of this section is to provide a full generalization of this result. We state our main result as follows.

\begin{theorem}\label{thm:chap6-main-2}
Let $[A]\in \PGL_2(\F_q)$ be an element of order $D=\ord([A])$. Then there exists a rational function $Q_{A}(x)=\frac{g_A(x)}{h_A(x)}$ of degree $D$ with the property that the $[A]$-invariants of degree $Dm>2$ are exactly the monic irreducible polynomials of the form $$F^{Q_A}:=(h_A)^m\cdot F\left(\frac{g_A}{h_A}\right),$$  
where $F$ has degree $m$. In addition, $Q_A$ can be explicitly computed from $A$.
\end{theorem}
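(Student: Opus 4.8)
The plan is to reduce to the reduced-form representatives of Theorem~\ref{thm:types} and build $Q_A$ case by case, then transport everything back along conjugations. First I would invoke Corollary~\ref{cor:conjugates-main}: if $B=PAP^{-1}$ and we have produced a rational function $Q_B=g_B/h_B$ of degree $D$ that realizes the $[B]$-invariants of degree $Dm>2$ as quadratic-type transformations $F^{Q_B}$, then composing $Q_B$ with the Möbius map $[P]$ (on the source side) should yield $Q_A$; concretely, writing $[P]$ as the fractional linear map $x\mapsto \frac{a_Px+c_P}{b_Px+d_P}$, one sets $Q_A = [P]^{-1}\circ Q_B \circ [P]$ as rational functions (cleared of denominators and normalized), and checks that the degree is preserved since $[P]$ has degree $1$. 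The one-to-one correspondence $\tau(f)=[P]^{-1}\circ f$ of Theorem~\ref{thm:conjugates-main} then matches $F^{Q_A}$ with $F^{Q_B}$ up to the same $F$, so it suffices to treat $A$ in reduced form. Thus the theorem splits into four explicit constructions: $A(a)$ with $\ord=D\mid q-1$ (here $Q_{A(a)}(x)=x^D$ is the natural guess, since $[A(a)]$-invariance of an irreducible $f$ means $f(ax)=f(x)$, forcing $f$ to be a polynomial in $x^D$), $\mathcal E$ with $D=p$ (here the transformation should be built from the additive polynomial $x^p-x$, reflecting $f(x+1)=f(x)$), $C(b)$ with $D=2$ (the classical self-reciprocal-type case, $Q_{C(b)}(x)=\frac{x^2+b}{2x}$ or its Garefalakis/MP analogue), and $D(c)$ with $D\mid q+1$, $D>2$, which is the genuinely new case.

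For each reduced case the key steps are the same. One guesses the candidate $Q_A=g_A/h_A$ of degree $D$ — for $A(a)$ take $x^D$; for the remaining cases $Q_A$ is essentially the "symmetric function" generating the orbit of $x$ under the cyclic group $\langle [A]\rangle$, i.e. $Q_A$ is characterized by $Q_A\circ[A]=Q_A$ and $\deg Q_A=D$, which determines $Q_A$ up to a Möbius change on the target. Then: (1) show that for any degree-$m$ polynomial $F$, the composition $F^{Q_A}=(h_A)^mF(g_A/h_A)$, when it is irreducible, is $[A]$-invariant of degree $Dm$ — this uses Lemma~\ref{lem:propertiesAf}(iii) together with the invariance $Q_A\circ[A]=Q_A$: applying $A\circ(-)$ to $F^{Q_A}$ permutes the factors $h_A(x)\cdot(g_A(x)/h_A(x) - \zeta)$ among themselves, so $A\circ F^{Q_A}$ is a scalar times $F^{Q_A}$; (2) conversely, show every $[A]$-invariant $f$ of degree $Dm>2$ has this form. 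For step (2) I would take a root $\alpha$ of $f$ in $\overline{\F_q}$ and consider $\beta=Q_A(\alpha)$. Using Theorem~\ref{thm:ST12-4.2} (or Lemma~\ref{ST4.5}), $[A]$-invariance pins down how Frobenius acts on $\alpha$ via the associated fractional linear map, from which one deduces $\F_q(\beta)$ has degree $m$ over $\F_q$ (the degree drops exactly by the factor $D$), so $\beta$ is a root of a unique monic irreducible $F$ of degree $m$; then $f$ and $F^{Q_A}$ are two monic polynomials of the same degree $Dm$ sharing the root $\alpha$, hence equal. The fact that $\deg Q_A = D$ guarantees $[\F_q(\alpha):\F_q(\beta)] = D$ here (using that $\alpha\notin\F_q$ since $Dm>2$, so no spurious ramification).

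The main obstacle is the type-$4$ case with $D(c)$ and $3\le D\mid q+1$: unlike the diagonal case there is no "clean" power map, so $Q_{D(c)}$ must be constructed by hand. The natural approach is to work in $\F_{q^2}=\F_q(\alpha)$ where $\alpha^2=\alpha+c$, diagonalize $D(c)$ over $\F_{q^2}$ with eigenvalues $\beta_0,\beta_0^q$ (as in the proof of Proposition~\ref{prop:orders}), and recognize the action of $[D(c)]$ on $\mathbb P^1$ as multiplication by $\beta_0^{q-1}$ (a $D$-th root of unity) in suitable coordinates; then $Q_{D(c)}$ is the pushforward of the degree-$D$ map "$x\mapsto x^D$" through the coordinate change, and one must verify the resulting rational function actually has $\F_q$-coefficients (this is the analogue of the "linear system has a solution over $\F_q$" argument used repeatedly in Theorem~\ref{thm:types}). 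Then one has to check $\deg Q_{D(c)}=D$ exactly (no cancellation between $g_A$ and $h_A$) and redo steps (1)--(2) above in this case, where the Frobenius-action bookkeeping in step (2) is the most delicate point — one needs $jm\equiv 1\pmod D$ as in Lemma~\ref{power} to identify which $F_{A,r}$ (equivalently which power $[A]^j$) the invariant $f$ divides, and to confirm that the field-degree count gives precisely $m$. The additive case ($\mathcal E$, $D=p$) is conceptually similar but uses $\F_q(\alpha)$ with $\alpha^p-\alpha=$ (something) and the Artin--Schreier structure in place of the Kummer structure; I expect it to go through with the same scheme, the transformation $Q_{\mathcal E}$ being essentially $\prod_{i=0}^{p-1}(x+i) = x^p-x$ up to normalization.
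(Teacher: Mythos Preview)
Your overall strategy---reduce to the four reduced forms of Theorem~\ref{thm:types}, construct $Q_A$ explicitly in each case, then transport along conjugation---matches the paper's, and for types $1$--$3$ your candidates $x^D$, $x^p-x$, $(x^2+b)/x$ are exactly what the paper uses (via the cited results of \cite{LR17} and \cite{MP17}). For type $4$ your construction of $Q_{D(c)}$ by diagonalizing over $\F_{q^2}$ and pushing forward $x\mapsto x^D$ is also what the paper does: Proposition~\ref{prop:type-4-rational} writes $g_c-\theta h_c=(x+\theta^q)^D$ and $g_c-\theta^q h_c=(x+\theta)^D$, which is precisely that pushforward.

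Where you and the paper diverge is in the \emph{converse} direction for type $4$ (and in spirit for all types): showing that every $[A]$-invariant $f$ of degree $Dm>2$ is of the form $F^{Q_A}$. You propose a root-based argument: set $\beta=Q_A(\alpha)$ for a root $\alpha$ of $f$, and show $[\F_q(\beta):\F_q]=m$. The paper instead works in the function field $K=\F_q(x)$: it observes that the automorphism $\Gamma_A$ of $K$ has fixed field $K^A=\F_q(Q_c)$ (Theorem~\ref{thm:fixed-field}, essentially L\"uroth), checks directly that $[A]\circ f=f$ forces $f/h_c^m$ to be $\Gamma_A$-invariant, hence $f/h_c^m=F(Q_c)$ for some rational $F\in\F_q(x)$, and then rules out poles of $F$ by noting that $g_c,h_c$ are coprime. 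This sidesteps entirely the Frobenius bookkeeping you flag as ``the most delicate point.'' Your route is workable, but be aware that ``$\deg Q_A=D$'' alone does not give $[\F_q(\alpha):\F_q(\beta)]=D$; you need to combine the trivial bound $[\F_q(\alpha):\F_q(\beta)]\le D$ (from $g_A(\alpha)-\beta h_A(\alpha)=0$) with the divisibility $[\F_q(\beta):\F_q]\mid m$, and the latter genuinely requires Lemma~\ref{ST4.5}: from $f\mid F_{A,\ell m}$ with $\gcd(\ell,D)=1$ one gets $\alpha^{q^{\ell m}}$ in the $\langle[A]\rangle$-orbit of $\alpha$, hence $\beta^{q^{\ell m}}=\beta$, so $[\F_q(\beta):\F_q]$ divides $\gcd(\ell m,Dm)=m$. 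The paper's fixed-field argument is shorter and more uniform; your approach is more hands-on and makes the arithmetic of the root visible, at the cost of that extra step.
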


We divide the proof of this result in two main cases. We first consider $[A]$ an element of type $t\le 3$: these cases can be easily covered by the works in~\cite{MP17} and~\cite{LR17}. For the case $t=4$, we generalize the main technique employed in~\cite{MP17}. In every case, we first consider an element of type $t$ in reduced form and then we apply Lemma~\ref{lem:conj-inv}.

\subsection{Elements of type $t\le 3$}\label{subsec:type-1-2-3}
We start considering elements in reduced form. This corresponds to the elements $[A(a)], [\mathcal E]$ and $[C(b)]$ in Theorem~\ref{thm:types}. For $[A(a)]$, we observe that $A(a)\circ f=f(ax)$ and $A(a)\circ f=\lambda \cdot f$ for some $\lambda\in \F_q^*$ yields $f(ax)=\lambda f(x)$: since $f$ has degree at least two, $f$ is not divisible by $x$ and so a comparison on the constant term in the previous equality yields $\lambda=1$. Therefore, $[A(a)]\circ f=f$ if and only if $A(a)\circ f=f$, i.e., $f(ax)=f(x)$. In particular, from Theorem~3.1 of~\cite{LR17}, the following corollary is straightforward.

\begin{cor}\label{cor:type-1}
Let $[A]$ be an element of type $1$ in reduced form such that $[A]$ has order $k$. For a monic irreducible polynomial of degree $km>2$, $[A]\circ f=f$ if and only if $f$ is a monic irreducible polynomial of the form $g(x^k)$ for some polynomial $g\in \F_q[x]$ of degree $m$.
\end{cor}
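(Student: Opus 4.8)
The plan is to derive Corollary~\ref{cor:type-1} as a direct consequence of Theorem~3.1 of~\cite{LR17} together with the elementary observation, already made in the paragraph preceding the statement, that for an element $[A]$ of type $1$ in reduced form — that is, $[A]=[A(a)]$ with $a\in\F_q\setminus\{0,1\}$ — the condition $[A]\circ f=f$ is equivalent to the genuine functional equation $f(ax)=f(x)$, with no scalar ambiguity. So the whole task reduces to translating ``$f(ax)=f(x)$ for all $x$'' into ``$f$ is a polynomial in $x^k$'', where $k=\ord([A(a)])$; but $k$ is precisely the multiplicative order of $a$ in $\F_q^*$, and this equivalence is exactly what Theorem~3.1 of~\cite{LR17} provides (it characterizes irreducible polynomials invariant under $x\mapsto ax$ as the irreducible composites $g(x^k)$).

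First I would recall the setup: since $[A]$ is of type $1$ in reduced form, write $A=A(a)$ with $a\in\F_q^*$ of multiplicative order $k>1$, so that $k=\ord([A])$ by Lemma~\ref{lem:order}(i) applied to the reduced form (the order of $[A(a)]$ in $\PGL_2(\F_q)$ equals the order of $a$ in $\F_q^*$, as one checks directly from $A(a)^j=A(a^j)$ and the fact that $A(a^j)$ is scalar iff $a^j=1$). Next I would invoke the preamble observation: for $f\in\I_n$ with $n\ge 2$, $[A(a)]\circ f=f$ if and only if $A(a)\circ f=f$, i.e.\ $f(ax)=f(x)$ — the point being that $A(a)\circ f=f(ax)$ has the same leading coefficient as $f$ (indeed any comparison of a fixed nonzero coefficient of $f$ forces the scalar to be $1$; the constant term works since $x\nmid f$). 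Then, for $f$ monic irreducible of degree $km>2$, Theorem~3.1 of~\cite{LR17} says precisely that $f(ax)=f(x)$ holds if and only if $f(x)=g(x^k)$ for some $g\in\F_q[x]$, and necessarily $\deg g=km/k=m$. Conversely, if $f=g(x^k)$ with $\deg g=m$ and $f$ irreducible, then $f(ax)=g((ax)^k)=g(a^kx^k)=g(x^k)=f(x)$, so $f$ is $[A]$-invariant. This closes both directions.

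I do not expect a serious obstacle here: the statement is essentially a repackaging of a known result from~\cite{LR17} after stripping the $\PGL_2$-scalar, and the only mild care needed is bookkeeping the relation $k=\ord([A])$ and the degree count $\deg g = m$. If anything is delicate it is making sure the hypothesis $km>2$ is used correctly — it guarantees $\deg f\ge 2$ so that $x\nmid f$ and the scalar-removal step is valid, and it matches the hypothesis under which Theorem~3.1 of~\cite{LR17} is stated; the case $km=2$ (quadratic invariants) is genuinely different and is deliberately excluded, as discussed in Section~\ref{subsec:lemma-aux} and the remark on quadratic invariants.
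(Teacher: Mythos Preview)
Your proposal is correct and follows exactly the same approach as the paper: the paper's ``proof'' is simply the sentence that the corollary is straightforward from Theorem~3.1 of~\cite{LR17}, after the paragraph preceding the statement has already reduced $[A(a)]\circ f=f$ to $f(ax)=f(x)$ via the constant-term comparison. You have just spelled out those straightforward details (including the identification $k=\ord(a)=\ord([A(a)])$ and the degree count), so there is nothing to add or correct.
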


For the case $[A]=[\mathcal E]$, we see that $[\mathcal E]\circ f=f$ if and only if $\mathcal E\circ f=f$, i.e., $f(x+1)=f(x)$ (see the proof of Theorem~\ref{thm:p-group}). In particular, from Theorem~2.5 of~\cite{LR17}, the following corollary is straightforward.

\begin{cor}\label{cor:type-2}
For $f\in \F_q[x]$, $f(x+1)=f(x)$ if and only if $f(x)=g(x^p-x)$ for some $g\in \F_q[x]$. In particular, $f$ is a monic irreducible polynomial of degree $pm$ such that $[\mathcal E]\circ f=f$ if and only if $f$ is a monic irreducible polynomial of the form $g(x^p-x)$ for some polynomial $g\in \F_q[x]$ of degree $m$.
\end{cor}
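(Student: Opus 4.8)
The plan is to reduce the whole statement to the single ring-theoretic equivalence
\begin{equation}
f(x+1)=f(x)\iff f(x)=g(x^p-x)\ \text{for some}\ g\in\F_q[x], \tag{$\ast$}
\end{equation}
and then to read off the ``in particular'' clause by bookkeeping degrees. Setting $\theta:=x^p-x$, the trivial half of $(\ast)$ is a one-line check: in characteristic $p$ one has $\theta(x+1)=(x+1)^p-(x+1)=x^p-x=\theta(x)$, so every $g(\theta)$ is fixed by the shift $x\mapsto x+1$. I also recall that, exactly as noted before the statement (cf. the proof of Theorem~\ref{thm:p-group}), comparing leading coefficients turns $[\mathcal E]\circ f=f$ into $\mathcal E\circ f=f$, i.e. into $f(x+1)=f(x)$; thus $(\ast)$ is precisely the content we need.

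For the substantive half of $(\ast)$ I would exploit the module structure of $\F_q[x]$ over $\F_q[\theta]$. Since $x$ satisfies the monic relation $X^p-X-\theta=0$ over $\F_q[\theta]$, we have $\F_q[x]=\F_q[\theta][X]/(X^p-X-\theta)$, so $\F_q[x]$ is free of rank $p$ over $\F_q[\theta]$ with basis $1,x,\dots,x^{p-1}$ (existence of the expansion comes from reducing via $x^p=x+\theta$, and uniqueness from the fact that the $x$-degrees $p\deg c_i+i$ have distinct residues mod $p$). The shift $\sigma\colon x\mapsto x+1$ fixes $\F_q$ and $\theta$, hence is an $\F_q[\theta]$-linear automorphism of this module. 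Writing an arbitrary $f$ uniquely as $f=\sum_{i=0}^{p-1}c_i(\theta)x^i$ with $c_i\in\F_q[\theta]$, the relation $\sigma(\theta)=\theta$ gives
\[
\sigma(f)-f=\sum_{i=1}^{p-1}c_i(\theta)\bigl((x+1)^i-x^i\bigr)=\sum_{j=0}^{p-2}\Bigl(\sum_{i=j+1}^{p-1}\binom{i}{j}c_i(\theta)\Bigr)x^j.
\]
Each inner sum lies in $\F_q[\theta]$ and $j\le p-2<p$, so this is already the reduced expansion in the free basis; therefore $\sigma(f)=f$ forces every bracketed coefficient to vanish. Taking $k$ to be the largest index with $c_k\ne0$ and assuming $k\ge1$, the equation indexed by $j=k-1$ collapses (the higher terms vanishing by the choice of $k$) to $\binom{k}{k-1}c_k=k\,c_k=0$, and since $1\le k\le p-1$ the scalar $k$ is nonzero in $\F_q$, giving $c_k=0$ — a contradiction. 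Hence $c_i=0$ for all $i\ge1$ and $f=c_0(\theta)\in\F_q[x^p-x]$, which is the nontrivial direction of $(\ast)$.

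It then remains to deduce the irreducible refinement. Given a monic irreducible $f$ of degree $pm$ with $[\mathcal E]\circ f=f$, the equivalence $(\ast)$ produces $f=g(x^p-x)$; since $\deg(x^p-x)=p$ this forces $\deg g=m$, and comparing leading coefficients (both $f$ and $x^p-x$ monic) forces $g$ monic. Conversely, any $f=g(x^p-x)$ with $\deg g=m$ has degree $pm$ and satisfies $f(x+1)=f(x)$ by the trivial half of $(\ast)$, hence $[\mathcal E]\circ f=f$; irreducibility of $f$ is simply carried as a standing hypothesis on both sides. I expect the only real obstacle to lie in the nontrivial direction of $(\ast)$, namely certifying that the shift-invariants of $\F_q[x]$ are no larger than $\F_q[x^p-x]$; the free-module description reduces this to the linear-algebra claim that the only $\sigma$-fixed vectors are the constants, whose crux is the elementary fact that $\binom{k}{k-1}=k\not\equiv0\pmod p$ for $1\le k\le p-1$. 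A root-theoretic alternative — grouping the roots of $f$ into $\F_p$-translation orbits and using $\prod_{a\in\F_p}(x-\alpha-a)=(x^p-x)-(\alpha^p-\alpha)$ — reaches the same factorization but then needs an extra descent step to confirm that the resulting $g$ has coefficients in $\F_q$ rather than merely in $\overline{\F}_q$.
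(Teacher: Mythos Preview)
Your proposal is correct. The paper itself gives no argument here: it simply declares the corollary ``straightforward'' from Theorem~2.5 of~\cite{LR17}, having already noted (via the proof of Theorem~\ref{thm:p-group}) the reduction of $[\mathcal E]\circ f=f$ to $f(x+1)=f(x)$. So you are not reproducing the paper's proof but supplying one where the paper outsources to an external reference.

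Your route is genuinely different in that it is self-contained: you realize $\F_q[x]$ as a free $\F_q[\theta]$-module of rank $p$ (with $\theta=x^p-x$), observe that the shift $\sigma$ is $\F_q[\theta]$-linear, and then show by an upper-triangular argument on the binomial expansion that the only $\sigma$-fixed vectors are the ``constants'' $c_0(\theta)$. The crux --- that $\binom{k}{k-1}=k$ is invertible for $1\le k\le p-1$ --- is exactly right. This buys you a proof that does not depend on~\cite{LR17} and that makes transparent why the invariant ring is \emph{exactly} $\F_q[x^p-x]$ rather than something larger. The cited result in~\cite{LR17} presumably proceeds along similar or Artin--Schreier lines; your argument is at least as elementary and arguably cleaner for this specific purpose. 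The degree and monicity bookkeeping in your final paragraph is also correct.
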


We finally arrive in the elements of type $3$. We recall that any irreducible polynomial $f$ of degree at least $3$ that is fixed by $[A]\in \PGL_2(\F_q)$ has degree divisible by $D$, the order of $[A]$. In particular, any monic irreducible polynomial $f$ of degree at least $3$ such that $[C(b)]\circ f=f$ has degree divisible by $\ord([C(b)])=2$, say $\deg f=2m$. Additionally, from definition, $[C(b)]\circ f=f$ if and only if $$x^{2m}f\left(\frac{b}{x}\right)=\lambda\cdot f(x),$$ 
for some $\lambda\in \F_q^*$. Since $[C(b)]$ is of type $3$, $b$ is not a square of $\F_q$. In particular, the polynomial $x^2-b$ is irreducible over $\F_q$. If $\theta\in \F_{q^2}$ is a root of such polynomial, evaluating both sides of the previous equality at $x=\theta$, we obtain $b^mf(\theta)=\lambda f(\theta)$.
Since $f$ has degree at least three, $f$ is not divisible by $x^2-b$ and so $f(\theta)\ne 0$. Therefore, $\lambda=b^m$. According to Lemma~3 of~\cite{MP17}, we have that $x^{2m}f(b/x)=b^mf(x)$ if and only if $f(x)=x^{m}g(x+b/x)$ for some $g\in \F_q[x]$ of degree $m$ and so we obtain the following result.

\begin{lemma}\label{lem:type-3}
Let $f$ be a monic irreducible polynomial of degree $2m>3$ and let $b\in \F_q^*$ be a non square. Then $[C(b)]\circ f=f$ if and only if $f$ is a monic irreducible polynomial of the form $x^mg(x+b/x)$ for some $g\in \F_q[x]$ of degree $m$.
\end{lemma}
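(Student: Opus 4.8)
The plan is to reduce everything to the functional equation already isolated in the paragraph preceding the statement, and then to characterize its solutions via the classical invariant theory of the involution $x\mapsto b/x$. First I would recall that, by that discussion, for a monic irreducible $f$ of degree $2m\ge 3$ one has the chain of equivalences: $[C(b)]\circ f=f$ $\iff$ $C(b)\circ f=\lambda f$ for some $\lambda\in\F_q^*$ $\iff$ $x^{2m}f(b/x)=\lambda f(x)$; evaluating at a root $\theta\in\F_{q^2}$ of the irreducible polynomial $x^2-b$ forces $\lambda=b^m$, using that $x^2-b\nmid f$ (which holds since $f$ is irreducible of degree $\ge 3$). Hence it suffices to prove that a monic polynomial $f$ of degree $2m$ satisfies $x^{2m}f(b/x)=b^m f(x)$ if and only if $f(x)=x^m g(x+b/x)$ for some monic $g\in\F_q[x]$ of degree $m$; this is precisely Lemma~3 of \cite{MP17}, which I would either cite directly or reprove as below.

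For the ``if'' direction I would substitute and use the identity $\dfrac{b}{x}+\dfrac{b}{\,b/x\,}=x+\dfrac{b}{x}$: if $f(x)=x^m g(x+b/x)$ then $x^{2m}f(b/x)=x^{2m}(b/x)^m g\!\left(\tfrac{b}{x}+\tfrac{b}{b/x}\right)=b^m x^m g(x+b/x)=b^m f(x)$. For the ``only if'' direction I would pass to the Laurent polynomial $\tilde f(x):=f(x)/x^m\in\F_q[x,x^{-1}]$; the hypothesis says $\tilde f$ is fixed by the $\F_q$-algebra involution $\iota$ of $\F_q[x,x^{-1}]$ sending $x\mapsto b/x$. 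Since the fixed subring of $\iota$ is exactly $\F_q[x+b/x]$ (a standard fact: with $y=x+b/x$, $x$ is integral of degree $2$ over $\F_q[y]$ with conjugate $b/x$, and a short induction on the symmetric Laurent polynomials $x^k+(b/x)^k$ shows they lie in $\F_q[y]$), we get $\tilde f(x)=g(x+b/x)$ for some $g\in\F_q[x]$; comparing the highest power of $x$ on both sides, and noting that $x+b/x$ contributes ``degree one'' as $x\to\infty$, shows $\deg g=m$, and monicity of $f$ forces $g$ monic.

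Finally I would observe that irreducibility transfers for free: in the forward direction the expression $x^m g(x+b/x)$ is literally the given polynomial $f$, hence monic irreducible; in the backward direction we assume $f=x^m g(x+b/x)$ is monic irreducible of degree $2m$ with $\deg g=m$, and the ``if'' computation yields $C(b)\circ f=b^m f$, so $[C(b)]\circ f=f$ because $f$ is monic. The only genuinely non-formal ingredient is the description of the $\iota$-invariant Laurent polynomials together with the bookkeeping that pins $\deg g$ to exactly $m$; this is classical and is the content borrowed from \cite{MP17}, so I expect the main obstacle to be merely presenting that invariant-ring computation cleanly rather than any conceptual difficulty.
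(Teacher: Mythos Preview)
Your proposal is correct and follows essentially the same route as the paper: you reduce $[C(b)]\circ f=f$ to the functional equation $x^{2m}f(b/x)=b^m f(x)$ by evaluating at a root of the irreducible polynomial $x^2-b$, and then invoke (or reprove) Lemma~3 of \cite{MP17} to characterize its solutions as $x^m g(x+b/x)$. The only difference is that you supply an explicit argument for the cited lemma via the $\iota$-invariant subring $\F_q[x+b/x]\subset\F_q[x,x^{-1}]$, which the paper omits; this is a welcome addition but not a departure in strategy.
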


Combining the previous results, we obtain the following theorem.

\begin{theorem}\label{thm:types1-3}
Let $[A]\in \PGL_2(\F_q)$ be an element of type $t\le 3$ and order $D$. Then there exists a rational function $Q_A=\frac{g_A(x)}{h_A(x)}$ of degree $D$ with the property that, for any monic irreducible polynomial $f$ of degree $Dm>2$, $[A]\circ f=f$ if and only if $f$ is a monic irreducible of the form
$$F^{Q_A}=(h_A)^m\cdot F\left(\frac{g_A}{h_A}\right),$$
for some $F\in \F_q[x]$ of degree $m$. 
\end{theorem}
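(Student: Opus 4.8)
The plan is to reduce Theorem~\ref{thm:types1-3} to the three corollaries and the lemma already established for the reduced forms $[A(a)]$, $[\mathcal E]$ and $[C(b)]$, and then transport the statement along a conjugation using Lemma~\ref{lem:conj-inv} (equivalently Theorem~\ref{thm:conjugates-main}). By Theorem~\ref{thm:types}, an element $[A]$ of type $t\le 3$ is conjugated in $\PGL_2(\F_q)$ to one of $[A(a)]$, $[\mathcal E]$, $[C(b)]$; write $[A]=[P]\cdot[A_0]\cdot[P]^{-1}$ with $[A_0]$ in reduced form and $D=\ord([A])=\ord([A_0])$. For the reduced form, Corollary~\ref{cor:type-1} (with $k=D\mid q-1$), Corollary~\ref{cor:type-2} (with $p=D$), and Lemma~\ref{lem:type-3} (with $D=2$) each exhibit a rational function $Q_{A_0}=g_{A_0}/h_{A_0}$ of degree $D$ — namely $x^D/1$, $(x^p-x)/1$, and $(x^2+b)/x$ respectively — such that the $[A_0]$-invariants of degree $Dm>2$ are exactly the monic irreducible $F^{Q_{A_0}}=(h_{A_0})^m F(g_{A_0}/h_{A_0})$ with $\deg F=m$. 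So the theorem holds for $A_0$.

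Next I would push this through the conjugation. By Lemma~\ref{lem:conj-inv}, $f$ is $[A]$-invariant of degree $Dm$ iff $g:=[P]^{-1}\circ f$ is $[A_0]$-invariant of degree $Dm$ (degree is preserved since the compositions $[B]\circ(\cdot)$ preserve degree). By the reduced-form case, $g=F^{Q_{A_0}}=(h_{A_0})^m F(g_{A_0}/h_{A_0})$ for a unique monic $F$ of degree $m$, and conversely every such $g$ arises. Applying $[P]$ back, $f=[P]\circ g$. Now the key observation is that the operation $[P]\circ(\cdot)$ on irreducible polynomials corresponds, on the level of the root, to the Möbius transformation associated to $[P]$: if $\mu_P(x)=\frac{px+r}{sx+t}$ for $P=\left(\begin{smallmatrix}p&r\\ s&t\end{smallmatrix}\right)$ (reading off Definition~\ref{def:mobius-action}), then the roots of $[P]\circ g$ are the images under $\mu_P^{-1}$ (or $\mu_P$, up to the transpose bookkeeping in Definition~\ref{def:mobius-action}) of the roots of $g$. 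Composition of the degree-$D$ rational map $Q_{A_0}$ with the degree-$1$ Möbius map $\mu_P$ on each side yields a new rational function of degree $D$; I would define $Q_A:=\mu_P^{-1}\circ Q_{A_0}\circ \mu_P$ (suitably interpreted so that the numerator/denominator are honest polynomials, clearing denominators and normalizing), and check that $[P]\circ\big((h_{A_0})^m F(g_{A_0}/h_{A_0})\big)$ is, up to a nonzero scalar making it monic, of the form $(h_A)^m F(g_A/h_A)$ with the \emph{same} $F$. Since the map $F\mapsto F^{Q_A}$ is injective on monic polynomials of degree $m$ (as $Q_A$ has degree $D\ge 1$, the composition has degree $Dm$ and determines $F$), and it lands in the monic irreducibles precisely when the image is irreducible, this gives the desired bijective characterization for $[A]$.

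The main obstacle I anticipate is purely bookkeeping: tracking how the composition $[P]\circ(\cdot)$ acts on a polynomial presented in the transformed shape $h^m F(g/h)$, i.e. verifying the identity $[P]\circ\big(h_{A_0}^m\,F(g_{A_0}/h_{A_0})\big) = c\cdot h_A^m\,F(g_A/h_A)$ for the right scalar $c\in\F_q^*$ and the right degree-$D$ pair $(g_A,h_A)$, together with making sure $\deg g_A=\deg h_A$ issues (the ``point at infinity'') do not cause the degree to drop — this is exactly why the hypothesis $Dm>2$ is kept, mirroring the thresholds $km>2$, $pm>2$, $2m>3$ in the component results. Concretely I would verify it by chasing roots in $\overline{\F}_q$: $\beta$ is a root of $F^{Q_A}$ iff $Q_A(\beta)$ is a root of $F$, and compare with the root description of $[P]\circ g$; the finite-field subtleties (that $Q_A$ sends $\beta\notin\F_q$ to a point where $F$ makes sense, and that the resulting polynomial stays in $\F_q[x]$) follow because everything is Galois-equivariant and $P\in\GL_2(\F_q)$. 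Once the identity is in hand, monicity is arranged by the scalar $c_{f,P}$ of Definition~\ref{def:mobius-action}, and the statement of Theorem~\ref{thm:types1-3} follows. Finally I would record the explicit $Q_A$ in each of the three types by substituting the reduced-form $Q_{A_0}$ above, which also yields the ``explicitly computable'' clause anticipated in Theorem~\ref{thm:chap6-main-2}.
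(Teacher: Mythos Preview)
Your overall strategy matches the paper's exactly: conjugate to a reduced form $[A_0]\in\{[A(a)],[\mathcal E],[C(b)]\}$, invoke Corollary~\ref{cor:type-1}, Corollary~\ref{cor:type-2} or Lemma~\ref{lem:type-3} there, and transport back via $[P]$. The paper, however, does not leave the transport as ``bookkeeping'': it writes $P=\left(\begin{smallmatrix}w_1&w_3\\w_2&w_4\end{smallmatrix}\right)$, expands $G(x)=\sum_{i=0}^m a_ix^i$, and computes $[P]\circ G(Q_{A_0})$ directly as $\sum a_i(w_1x+w_2)^{Di}(w_3x+w_4)^{D(m-i)}$ (and the analogous expressions for $t=2,3$). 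This immediately produces explicit $g_A,h_A$ and makes the coprimality check trivial (it reduces to $w_1x+w_2$ and $w_3x+w_4$ being coprime, which holds since $P\in\GL_2(\F_q)$). Your abstract root-chasing would eventually arrive at the same place, but the explicit expansion is the actual content of the argument and is short enough that deferring it leaves the proof incomplete.

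There is also a concrete error in your sketch: the correct transported rational function is $Q_A=Q_{A_0}\circ\mu$, where $\mu(x)=\frac{w_1x+w_2}{w_3x+w_4}$ is the substitution appearing in $P\circ(\cdot)$ from Definition~\ref{def:mobius-action}, \emph{not} the two-sided conjugate $\mu_P^{-1}\circ Q_{A_0}\circ\mu_P$. Indeed, $[P]\circ\big(h_{A_0}^m F(Q_{A_0})\big)$ is, up to a scalar, $(w_3x+w_4)^{Dm}\,h_{A_0}(\mu(x))^m\,F\big(Q_{A_0}(\mu(x))\big)$, so the same $F$ reappears with argument $Q_{A_0}\circ\mu$; no outer M\"obius acts on $F$. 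Once you drop the spurious $\mu_P^{-1}$ and actually carry out the substitution, your proof becomes the paper's.
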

\begin{proof}
From Theorem~\ref{thm:types}, there exists an element $[\mathcal A]$ of type $t$ in reduced form and $[P]\in \PGL_2(\F_q)$ such that $[A]=[P]\cdot [\mathcal A]\cdot [P]^{-1}$. Let $f$ be a monic irreducible polynomial of degree $Dm>2$. From Lemma~\ref{lem:conj-inv}, $[A]\circ f=f$ if and only if $[\mathcal A]\circ g=g$, where $g=[P]^{-1}\circ f$. Write $P=\left(\begin{matrix}
w_1&w_3\\ w_2&w_4
\end{matrix}\right)$. We have three cases to consider.
\begin{enumerate}[(i)]
\item $t=1$; in this case, $[\mathcal A]=[A(a)]$ for some $a\in \F_q^*$ with multiplicative order $D$ and Corollary~\ref{cor:type-1} entails that $[A(a)]\circ g=g$ if and only if $g(x)=G(x^D)$ for some $G\in \F_q[x]$ of degree $m$. Therefore, $[A]\circ f=f$ if and only if $f=[P]\circ G(x^D)$. Now we just need to compute $[P]\circ G(x^D)$. If we write $G(x)=\sum_{i=0}^{m}a_ix^i$, from definition, we have that
$$[P]\circ G(x^D)=\lambda\cdot (w_3x+w_4)^{Dm}\cdot \sum_{i=0}^{m}a_i\left(\frac{w_1x+w_2}{w_3x+w_4}\right)^{Di}=\lambda\cdot \sum_{i=0}^{m}a_i(w_1x+w_2)^{Di}(w_3x+w_4)^{D(m-i)},$$
for some $\lambda\in \F_q^*$. The latter equality can be rewritten as
$$[P]\circ G(x^D)=h_A^m\cdot F\left(\frac{g_A}{h_A}\right)=F^{Q_A},$$
where $g_A=(w_1x+w_2)^D$, $h_A=(w_3x+w^4)^D$, $Q_A=\frac{g_A}{h_A}$ and $F=\lambda\cdot G\in \F_q[x]$ is a polynomial of degree $m$. Since $P\in \GL_2(\F_q)$, $w_1x+w_2$ and $w_3x+w_4$ are relatively prime, hence $g_A$ and $h_A$ are relatively prime. Therefore, $Q_A$ is a rational function of degree $D$.

\item $t=2$; in this case, $D=p$, $[\mathcal A]=[\mathcal E]$ and Corollary~\ref{cor:type-2} entails that $[\mathcal E]\circ g=g$ if and only if $g(x)=G(x^p-x)$ for some $G\in \F_q[x]$ of degree $m$. In the same way as before, we conclude that $[A]\circ f=f$ if and only if $f$ equals $F^{Q_A}$, where $F=\lambda \cdot G$, $Q_A=\frac{g_A}{h_A}$, $$g_A=(w_1x+w_2)^p-(w_1x+w_2)(w_3x+w_4)^{p-1}\quad \text{and}\quad h_A=(w_3x+w_4)^p.$$ Since $P\in \GL_2(\F_q)$, $w_1x+w_2$ and $w_3x+w_4$ are relatively prime, hence $g_A$ and $h_A$ are relatively prime. Therefore, $Q_A$ is a rational function of degree $p=D$.

\item $t=3$; in this case, $D=2$, $[\mathcal A]=[C(b)]$ (for some non square $b\in \F_q^*$) and Lemma~\ref{lem:type-3} entails that $[C(b)]\circ g=g$ if and only if $g(x)=x^mG(x+b/x)$ for some $G\in \F_q[x]$ of degree $m$. In the same way as before, we conclude that $[A]\circ f=f$ if and only if $f$ equals $F^{Q_A}$, where $F=\lambda \cdot G\in \F_q[x]$, $Q_A=\frac{g_A}{h_A}$, $$g_A=(w_1x+w_2)^2+(w_3x+w_4)^{2}\quad\text{and}\quad h_A=(w_1x+w_2)(w_3x+w_4).$$ Since $P\in \GL_2(\F_q)$, $w_1x+w_2$ and $w_3x+w_4$ are relatively prime and so $g_A$ and $h_A$ are relatively prime. Therefore, $Q_A$ is a rational function of degree $D=2$.
\end{enumerate}
\end{proof}

\subsection{Elements of type $4$}
As pointed out earlier, in~\cite{MP17}, the authors prove that two specific elements $[A]\in \PGL_2(\F_q)$ of order 3 and 4 are such that the $[A]$-invariants arise from \emph{cubic} and \emph{quartic} transformations, respectively (see Lemmas~11 and~12 of~\cite{MP17}) . Their idea relies on considering the field $K^{A}\subseteq K$ of the elements that are fixed by the automorphism $\Gamma_A: x\mapsto \frac{ax+b}{cx+d}$ of $K:=\F_q(x)$ associated to $[A]\in \PGL_2(\F_q)$ with $A=\left(\begin{matrix}
a&b\\ c&d
\end{matrix}\right)$: if the fixed field equals $\F_q(Q_A)$, $Q_A\in K$ is the rational function that yields the $[A]$-invariants. In that paper, they show that $Q_A$ can be taken as the sums of the elements lying in the \emph{orbit} of $x$ by the automorphism $x\mapsto \frac{ax+c}{bx+d}$, i.e., the \emph{trace} of $x$ in the extension $K/K^A$. Perhaps, the main obstruction appears when $A$ is a generic element. In fact, the trace function does not work in general: for instance, if $A=\mathcal E$, $\Gamma_A:x\mapsto x+1$ and so the trace equals $\sum_{i=0}^{p-1}(x+i)=e$, where $e=1$ if $p=2$ and $e=0$ if $p>2$. One can see that, in fact, the rational functions associated to elements of type $1$ and $2$ in reduced form (which are $x^k$ and $x^p-x$, respectively) are simply the \emph{product} of the elements lying in the orbit of $x$ by $\Gamma_A$, i.e., the \emph{norm} of $x$ in the extension $K/K^A$.

In this subsection, we refine this method to elements $[A]$ of type $4$ in reduced form, obtaining an explicit description of the fixed field $K^A$. In particular, we can avoid the construction via the trace map. We summarize some basic properties on the automorphisms $\Gamma_A$.

\begin{theorem}\label{thm:automorphism}
For $A, B\in \GL_2(\F_q)$, the following hold:
\begin{enumerate}[(a)]
\item for $[A]=[I]$, $\Gamma_A$ is the identity map,
\item the ordinary composition $\Gamma_A\circ \Gamma_B$ equals $\Gamma_{AB}$,
\item the automorphism $\Gamma_A$ is of finite order and its order coincides with the order of $[A]$ in $\PGL_2(\F_q)$.
\end{enumerate}
\end{theorem}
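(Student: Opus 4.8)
The plan is to realize each $\Gamma_A$ as an $\F_q$-automorphism of $K=\F_q(x)$ and to read off (a)--(c) from the compatibility between composition of M\"obius maps and matrix multiplication. First I would record that $\Gamma_A$ is well defined: because $A\in\GL_2(\F_q)$, the rational function $\frac{ax+b}{cx+d}$ is non-constant --- if it equalled some $\gamma\in\F_q$ then $(a,b)=\gamma(c,d)$ and hence $\det A=0$ --- so the rule $x\mapsto\frac{ax+b}{cx+d}$ extends uniquely to an $\F_q$-algebra endomorphism of $K$; and since replacing $A$ by $\lambda A$ ($\lambda\in\F_q^*$) leaves this function unchanged, $\Gamma_A$ depends only on $[A]$.

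Part (a) is immediate: for $[A]=[I]$ we may take $A=\lambda I$, so $\frac{ax+b}{cx+d}=x$ and $\Gamma_A=\mathrm{id}$. I would also record the converse here, as it is needed for (c): if $\Gamma_A=\mathrm{id}$ then $\frac{ax+b}{cx+d}=x$, i.e.\ $cx^2+(d-a)x-b$ vanishes identically, forcing $c=b=0$ and $a=d$, that is $[A]=[I]$. For part (b), I would substitute $\Gamma_B(x)=\frac{a'x+b'}{c'x+d'}$ into $\Gamma_A$, using that $\Gamma_A$ is a ring homomorphism fixing $\F_q$, clear the denominator $cx+d$, and check that the coefficients of the resulting degree-one rational function are, up to an irrelevant nonzero scalar, the entries of the matrix product, giving $\Gamma_A\circ\Gamma_B=\Gamma_{AB}$; the entrywise verification is a routine $2\times2$ computation. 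A first dividend of (b) is $\Gamma_A\circ\Gamma_{A^{-1}}=\Gamma_I=\mathrm{id}=\Gamma_{A^{-1}}\circ\Gamma_A$, which shows a posteriori that each $\Gamma_A$ is bijective, hence an automorphism of $K/\F_q$, and that $[A]\mapsto\Gamma_A$ is compatible with the group law.

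For part (c), iterating (b) gives $\Gamma_A^{\,n}=\Gamma_{A^n}$ for all $n\ge1$, so $\Gamma_A^{\,n}=\mathrm{id}$ if and only if $\Gamma_{A^n}=\mathrm{id}$, which by the converse established in (a) holds if and only if $[A]^n=[A^n]=[I]$; taking $n$ minimal shows that $\Gamma_A$ has finite order equal to $\ord([A])$. The only step that is more than bookkeeping is the very first one --- confirming that $x\mapsto\frac{ax+b}{cx+d}$ genuinely defines an endomorphism of the field $K$ (equivalently, that this function is transcendental over $\F_q$) and that it descends to $\PGL_2(\F_q)$ --- but this is exactly where invertibility of $A$ is used, and once it is in place parts (a)--(c) are short.
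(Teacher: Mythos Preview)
The paper states this theorem without proof, treating it as a summary of standard facts, so there is no argument in the paper to compare against. Your outline for (a) and (c) is the standard one and is correct; in particular, recording the converse of (a) and then using $\Gamma_A^{\,n}=\Gamma_{A^n}$ to pin down the order is exactly right.

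There is, however, a genuine slip in your plan for (b). You say you will apply $\Gamma_A$ to $\Gamma_B(x)$ ``using that $\Gamma_A$ is a ring homomorphism'' and then clear the denominator $cx+d$. Doing precisely that,
\[
\Gamma_A\!\left(\frac{a'x+b'}{c'x+d'}\right)=\frac{a'\,\Gamma_A(x)+b'}{c'\,\Gamma_A(x)+d'}=\frac{a'(ax+b)+b'(cx+d)}{c'(ax+b)+d'(cx+d)},
\]
and the coefficient matrix you read off is $BA$, not $AB$. A quick sanity check: with $A=\left(\begin{smallmatrix}1&1\\0&1\end{smallmatrix}\right)$ and $B=\left(\begin{smallmatrix}0&1\\1&0\end{smallmatrix}\right)$ one has $(\Gamma_A\circ\Gamma_B)(x)=\Gamma_A(1/x)=1/\Gamma_A(x)=1/(x+1)=\Gamma_{BA}(x)$, whereas $\Gamma_{AB}(x)=(x+1)/x$. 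In other words, with $\Gamma_A$ viewed as the $\F_q$-algebra automorphism of $K$ sending $x\mapsto\frac{ax+b}{cx+d}$, the map $[A]\mapsto\Gamma_A$ is an \emph{anti}-homomorphism; the identity $\Gamma_A\circ\Gamma_B=\Gamma_{AB}$ holds if one instead reads $\Gamma_A$ as the M\"obius map on points of $\mathbb P^1(\overline{\F_q})$. This discrepancy is harmless for (c), since $\Gamma_A^{\,n}=\Gamma_{A^n}$ either way and the paper only ever uses the cyclic group $\langle\Gamma_A\rangle$ downstream, but the ``routine $2\times 2$ computation'' you promise would not produce $\Gamma_{AB}$ as claimed.
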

Clearly, $\Gamma_A=\Gamma_B$ if and only if $[A]=[B]$ in $\PGL_2(\F_q)$, hence $\mathrm{Aut}(K)$ is isomorphic to $\PGL_2(\F_q)$. The following result is classic.

\begin{theorem}\label{thm:fixed-field}
Let $G$ be any subgroup of $\mathrm{Aut}(K)$ of order $d=|G|$. Then  $K$ is a $d$-degree extension of the fixed field $K^G\subseteq K$ of $K$ by $G$. In particular, if $Q\in K^G$ is any rational function of degree $d$, $K^G=K(Q)$.
\end{theorem}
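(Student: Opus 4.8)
\textbf{Proof proposal for Theorem~\ref{thm:fixed-field}.}
This is the classical statement of Artin's theorem for function fields, so the plan is to reduce it to Galois theory for fields. First I would observe that $K=\F_q(x)$ is a finitely generated field extension of transcendence degree one over $\F_q$, and that $G\subseteq \mathrm{Aut}(K)$ acts as a finite group of $\F_q$-algebra automorphisms (each $\Gamma_A$ fixes $\F_q$ pointwise, by the explicit formula $\Gamma_A:x\mapsto\frac{ax+b}{cx+d}$). By Artin's lemma, $K/K^G$ is a finite Galois extension with Galois group exactly $G$, hence $[K:K^G]=|G|=d$. The only subtlety is that one should note $K^G$ contains $\F_q$ and is itself a rational function field (by Lüroth's theorem, since it sits between $\F_q$ and $\F_q(x)$), though for the statement as written we do not even need this — we only need the degree equality and the existence claim.

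Next I would produce an explicit element of $K^G$ of degree $d$, which both anchors the abstract argument and matches the constructive flavor needed later in the paper. Given the natural action, a canonical choice is the norm $Q_0=\prod_{\Gamma\in G}\Gamma(x)$ or the trace $\sum_{\Gamma\in G}\Gamma(x)$; both are $G$-invariant, hence lie in $K^G$. I would argue that at least one of these has degree exactly $d=|G|$ as a rational function — the orbit of $x$ under $G$ has size $d$ (since $x$ is not a common fixed point of a nontrivial subgroup: if $\Gamma_A(x)=x$ identically then $[A]=[I]$), so the norm, viewed as a rational function, has degree $d$ generically, and in the degenerate cases (like $A=\mathcal E$, where the trace collapses to a constant) one switches to the other symmetric function. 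For the purposes of the theorem statement it suffices to know \emph{some} degree-$d$ rational function exists in $K^G$; I can invoke the general fact that $[K:K^G]=d$ forces any generator to have degree dividing... rather, I would instead argue directly: $K^G=\F_q(Q)$ for any $Q\in K^G$ that generates, and such a $Q$ satisfies $[K:\F_q(Q)]=\deg Q$ by the standard degree formula for rational function fields.

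The core step is then: if $Q\in K^G$ has degree $d$ as a rational function, then $K^G=\F_q(Q)$. I would prove the inclusion $\F_q(Q)\subseteq K^G$ trivially (from $Q\in K^G$), and then compare degrees: $[K:\F_q(Q)]=\deg Q=d=[K:K^G]$ by Artin's lemma, and since $\F_q(Q)\subseteq K^G\subseteq K$, the tower law forces $[K^G:\F_q(Q)]=1$, i.e. $K^G=\F_q(Q)$. Here I am using the classical fact that for a nonconstant rational function $Q=g/h\in\F_q(x)$ with $\gcd(g,h)=1$, the field extension $\F_q(x)/\F_q(Q)$ is finite of degree $\max(\deg g,\deg h)$, which is precisely the ``degree of $Q$'' in the sense used throughout the paper.

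The main obstacle I anticipate is purely one of bookkeeping and citation rather than genuine difficulty: one must be careful that ``degree of a rational function'' (the $\max$ of numerator and denominator degrees after reducing the fraction) is exactly the field-extension degree $[\F_q(x):\F_q(Q)]$, and that Artin's lemma applies in this characteristic-$p$, possibly-inseparable-looking setting — but in fact $K/K^G$ is automatically separable when it is Galois with group $G$, so there is no inseparability issue. Since the theorem is explicitly labeled ``classic'', I would keep the write-up short: cite Artin's lemma for the degree equality, cite the rational-function degree formula, and finish with the two-line tower-law argument. No lengthy computation is needed.
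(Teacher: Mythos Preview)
Your proposal is correct, but there is nothing to compare against: the paper does not prove this theorem. It introduces it with ``The following result is classic'' and states it without proof, so your write-up would be strictly more than what the paper provides.

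Your argument is the standard one and works: Artin's lemma gives $[K:K^G]=|G|=d$, the degree formula gives $[\F_q(x):\F_q(Q)]=\deg Q=d$ for any reduced $Q=g/h\in K^G$, and the tower law finishes it. One remark: your middle paragraph, where you try to \emph{construct} a degree-$d$ invariant via norm or trace, is not needed for the statement as written. The theorem only asserts that \emph{if} such a $Q$ exists then $K^G=\F_q(Q)$; it does not claim existence. (Existence follows anyway from L\"uroth, as you note, but you can drop the orbit-size discussion entirely.) Keeping just the first and third paragraphs would match the ``classic, hence terse'' spirit the paper intends.
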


In particular, in order to obtain the fixed field $K^A$, we just need to find rational function $Q_A$ of degree $D=\ord([A])$ that is contained in $K^A$. When $A$ is an element of type 4 in reduced form, we can explicitly find such a $Q_A$.

\begin{prop}\label{prop:type-4-rational}
Let $A=D(c)$ be an element of type $4$ and let $D$ be the order of $[A]$ in $\PGL_2(\F_q)$. Additionally, let $\theta\in \F_{q^2}\setminus \F_q$ be a root of $x^2-x-c$ (i.e., $\theta$ is an \emph{eigenvalue} of $D(c)$). For $$g_c(x):=\frac{\theta^q(x+\theta^q)^D-\theta(x+\theta)^D}{\theta^q-\theta}\quad\text{and}\quad h_c(x):=\frac{(x+\theta^q)^D-(x+\theta)^D}{\theta^q-\theta},$$ the following hold:

\begin{enumerate}[(i)]
\item $g_c, h_c\in \F_q[x]$ are relatively prime polynomials of degree $D$ and $D-1$, respectively,

\item the roots of $h_c(x)$ lie in $\F_{q^2}$,

\item $g_c\left(\frac{c}{x+1}\right)=\theta^D\cdot \frac{g_c(x)}{(x+1)^D}$ and $h_c\left(\frac{c}{x+1}\right)=\theta^{D}\frac{h_c(x)}{(x+1)^D}$.
\end{enumerate}

In particular, the fixed field $K^A$ equals $\F_q(Q_c)$, where $Q_c=\frac{g_c}{h_c}$.
\end{prop}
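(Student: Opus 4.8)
The plan is to work with the eigenvalue $\theta$ of $D(c)$ and exploit that $\Gamma_{D(c)}$ has, over $\F_{q^2}$, two fixed points, namely the two eigenvalues $\theta$ and $\theta^q$ of the transpose action. First I would set $u = x+\theta$ and $v = x+\theta^q$, so that $g_c = \frac{\theta^q u^D - \theta v^D}{\theta^q-\theta}$ and $h_c = \frac{u^D - v^D}{\theta^q-\theta}$; note $u - v = \theta - \theta^q$ is a nonzero constant in $\F_{q^2}$, and $\theta + \theta^q = 1$, $\theta\theta^q = -c$ by Vieta. For item (i): both $g_c$ and $h_c$ are fixed by the Frobenius $x\mapsto x^q$ on coefficients (Frobenius swaps $\theta \leftrightarrow \theta^q$ and $u\leftrightarrow v$, and one checks it fixes each expression), so they lie in $\F_q[x]$. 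The degree count: $v^D - u^D$ has leading terms cancelling in degree $D$, and the degree-$(D-1)$ coefficient is $D(v^{D-1}-u^{D-1})\cdot(\text{something})$ — more cleanly, $u^D - v^D = (u-v)\sum_{i=0}^{D-1}u^i v^{D-1-i}$, so $h_c = \frac{u-v}{\theta^q-\theta}\sum_{i=0}^{D-1}u^iv^{D-1-i} = -\sum_{i=0}^{D-1}u^iv^{D-1-i}$, visibly monic of degree $D-1$ (up to sign). For $g_c$, the coefficient of $x^D$ is $\frac{\theta^q-\theta}{\theta^q-\theta}=1$, so $g_c$ is monic of degree $D$. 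Coprimality: a common root $\alpha$ would force $u(\alpha)^D = v(\alpha)^D$ and $\theta^q u(\alpha)^D = \theta v(\alpha)^D$, hence $(\theta^q-\theta)u(\alpha)^D = 0$, so $u(\alpha)=0$, i.e. $\alpha = -\theta$; but then $v(\alpha) = \theta^q-\theta \ne 0$, contradicting $u(\alpha)^D = v(\alpha)^D = 0$. So $\gcd(g_c,h_c)=1$.

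For item (ii): from $h_c = -\sum_{i=0}^{D-1}u^i v^{D-1-i}$, a root $\alpha$ satisfies $\sum_{i=0}^{D-1}u(\alpha)^i v(\alpha)^{D-1-i}=0$; since $v(\alpha)\ne 0$ unless $u(\alpha) = v(\alpha)-( \theta^q-\theta)\ne $ — more simply, at a root we cannot have $u(\alpha)=0$ (else every term with $i\ge 1$ vanishes and the $i=0$ term is $v(\alpha)^{D-1}=(\theta^q-\theta)^{D-1}\ne0$), so dividing by $v(\alpha)^{D-1}$ gives $\sum_{i=0}^{D-1}(u(\alpha)/v(\alpha))^i = 0$, i.e. $u(\alpha)/v(\alpha)$ is a primitive $D$-th root of unity $\zeta\ne 1$ (it's a $D$-th root of unity that isn't $1$ since the geometric sum vanishes). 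Solving $u(\alpha) = \zeta v(\alpha)$, i.e. $\alpha + \theta = \zeta(\alpha+\theta^q)$, gives $\alpha = \frac{\zeta\theta^q - \theta}{1-\zeta}\in \F_{q^2}(\zeta)$. Here I must argue $\zeta \in \F_{q^2}$: the relevant $\zeta$'s are exactly the ratios $u(\alpha)/v(\alpha)$ arising from the $D-1$ roots of $h_c$, and since $D \mid q+1$ (Lemma~\ref{lem:order}(iv)), we have $q \equiv -1 \pmod D$, so $\F_{q^2}$ contains all $D$-th roots of unity; hence $\alpha\in\F_{q^2}$.

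For item (iii): I would verify the functional equation directly. Observe that $\Gamma_{D(c)}$ on $K$ sends $x\mapsto \frac{c}{x+1}$, and applying the substitution to $u = x+\theta$ gives $\frac{c}{x+1}+\theta = \frac{c + \theta(x+1)}{x+1} = \frac{\theta x + (\theta + c)}{x+1}$; using $\theta^2 = \theta + c$ (since $\theta$ is a root of $x^2-x-c$), this equals $\frac{\theta x + \theta^2}{x+1} = \frac{\theta(x+\theta)}{x+1} = \frac{\theta u}{x+1}$. Likewise $\frac{c}{x+1}+\theta^q = \frac{\theta^q v}{x+1}$. Substituting into $g_c$ and $h_c$: $g_c\!\left(\frac{c}{x+1}\right) = \frac{\theta^q(\theta u/(x+1))^D - \theta(\theta^q v/(x+1))^D}{\theta^q-\theta} = \frac{1}{(x+1)^D}\cdot\frac{\theta^{q}\theta^D u^D - \theta(\theta^q)^D v^D}{\theta^q-\theta}$. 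I need this to equal $\theta^D g_c(x)/(x+1)^D$, i.e. $\theta^q\theta^D u^D - \theta\theta^{qD}v^D = \theta^D(\theta^q u^D - \theta v^D)$, which rearranges to $\theta^{qD}\theta v^D = \theta^D\theta v^D$, i.e. $\theta^{qD} = \theta^D$ — but this is precisely the condition characterizing $D = \ord([A])$ from the proof of Lemma~\ref{lem:order}(iv). The same identity $\theta^{qD}=\theta^D$ gives the statement for $h_c$. Finally, for the concluding claim: (iii) shows $Q_c = g_c/h_c$ is fixed by $\Gamma_A$ (the factors $\theta^D/(x+1)^D$ cancel in the ratio), so $Q_c \in K^A$; and $Q_c$ has degree $\max(\deg g_c, \deg h_c) = D$ by (i), so $\F_q(Q_c) = K^A$ by Theorem~\ref{thm:fixed-field}.

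The main obstacle I anticipate is item (ii) — specifically, pinning down that the $D$-th roots of unity $\zeta$ that actually occur as ratios at roots of $h_c$ lie in $\F_{q^2}$, and handling the characteristic-$2$ or small-$D$ edge cases (e.g. when $\gcd(D,q)$ interacts with the geometric-sum argument, or when some $\zeta$ could equal $-1$); the cleanest route is to invoke $D\mid q+1$ so that $\mu_D \subseteq \F_{q^2}^\times$ outright, making the root formula $\alpha = (\zeta\theta^q-\theta)/(1-\zeta)$ manifestly land in $\F_{q^2}$. Items (i) and (iii) are essentially bookkeeping once the substitution identities $\frac{c}{x+1}+\theta = \frac{\theta(x+\theta)}{x+1}$ and $\theta^{qD}=\theta^D$ are in hand.
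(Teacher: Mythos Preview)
Your proof is correct and follows essentially the same approach as the paper: the same Frobenius-invariance argument for (i), the same $D$-th-root-of-unity solve for (ii) via $D\mid q+1$ (so $\mu_D\subset\F_{q^2}^\times$), and the same substitution identity $\frac{c}{x+1}+\theta=\frac{\theta(x+\theta)}{x+1}$ combined with $\theta^{qD}=\theta^D$ for (iii). One cosmetic slip: with your conventions $u=x+\theta$, $v=x+\theta^q$ the formulas should read $g_c=\frac{\theta^q v^D-\theta u^D}{\theta^q-\theta}$ and $h_c=\frac{v^D-u^D}{\theta^q-\theta}$ (you swapped $u$ and $v$), but this does not affect the argument; the paper's coprimality step is the slightly slicker observation $g_c-\theta h_c=(x+\theta^q)^D$ and $g_c-\theta^q h_c=(x+\theta)^D$, which is equivalent to your common-root contradiction.
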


\begin{proof}
\begin{enumerate}[(i)]
\item It is straightforward to check that the coefficients of $g_c$ and $h_c$ are invariant by the Frobenius map $a\mapsto a^q$ and so $g_c, h_c\in \F_q[x]$. Additionally, $g_c$ has degree $D$ (in fact, is a monic polynomial) and $h_c$ has degree $D-1$. We observe that $g_c-\theta h_c=(x+\theta^q)^D$ and $g_c-\theta^q h_c=(x+\theta)^D$, where $\theta\ne \theta^{q}$ (recall that $\theta\not\in \F_q$). In particular, the polynomials $g_c, h_c$ are relatively prime. 

\item Let $\alpha\in \overline{\F}_q$ be a root of $h_c$. Therefore, $(\alpha+\theta^q)^D=(\alpha+\theta)^D$. Clearly $\alpha\ne -\theta, -\theta^q$, since $\theta^q\ne -\theta$. In particular, $\left(\frac{\alpha+\theta^q}{\alpha+\theta}\right)^D=1$. Recall that $D$ divides $q+1$ (see Lemma~\ref{lem:order}) and so $\frac{\alpha+\theta^q}{\alpha+\theta}=\delta$ for some $\delta\in \F_{q^2}$. Since $\theta^q\ne \theta$, $\delta\ne 1$ and then $\alpha=\frac{\theta^q-\theta\delta}{\delta-1}\in \F_{q^2}$.

\item Since $\theta$ is a root of $x^2-x-c$, we have that $\theta^q+\theta=1$ and $\theta^{q+1}=-c$. We just compute $g_c\left(\frac{c}{x+1}\right)$ since the computation of $h_c\left(\frac{c}{x+1}\right)$ is quite similar. We observe that
$$\frac{c}{x+1}+\theta=\theta\cdot \frac{x+1-\theta^q}{x+1}=\theta\cdot \frac{x+\theta}{x+1},$$
and so $\frac{c}{x+1}+\theta^{q}=\theta^{q}\cdot \frac{x+\theta^q}{x+1}$. Therefore, 
$$g_c\left(\frac{c}{x+1}\right)=\frac{\theta^q\cdot \theta^{qD}(x+\theta^q)^D-\theta\cdot \theta^D(x+\theta)^D}{(\theta^q-\theta)(x+1)^D}=\theta^D\cdot \frac{g_c(x)}{(x+1)^D},$$
since $\theta^{qD}=\theta^D$ (see the proof of~Lemma~\ref{lem:order}).
\end{enumerate}

\noindent It is clear that the the automorphism $\Gamma_A$ induced by $A=D(c)$ is given by $x\mapsto \frac{c}{x+1}$. In particular, its order equals $D$. From item (iii), one can see that $Q_c=\frac{g_c}{h_c}$ is fixed by this automorphism and, from item (i), $Q_c$ is a rational function of degree $D$. Therefore, from Theorem~\ref{thm:fixed-field}, we have that $K^{A}=\F_q(Q_c)$.
\end{proof}

Let $f$ be an irreducible polynomial of degree $Dm>2$. From definition, $[D(c)]\circ f=f$ if and only if $$(x+1)^{Dm}f\left(\frac{c}{x+1}\right)=\lambda\cdot f(x),$$ 
for some $\lambda\in \F_q^*$. Since $[D(c)]$ is of type $4$, $x^2-x-c$ is irreducible over $\F_q$ (see~Theorem~\ref{thm:types}). If $\theta\in \F_{q^2}$ is a root of such polynomial, evaluating both sides of the previous equality at $x=-\theta$, we obtain $$(1-\theta)^{Dm}f\left(\frac{c}{1-\theta}\right)=\lambda\cdot f(-\theta).$$
Since $\theta^2-\theta=c=-\theta^{q+1}$ and $\theta^q=1-\theta$, we obtain $\frac{c}{1-\theta}=-\theta$ and so the latter is equivalent to $$\theta^{qDm}f(-\theta)=\lambda f(-\theta).$$ We observe that $-\theta$ has minimal polynomial $x^2+x-c$ and, since $f$ is irreducible of degree $Dm>2$, $f$ is not divisible by $x^2+x-c$. Therefore, $f(-\theta)\ne 0$ and so $\lambda=\theta^{qDm}=\theta^{Dm}$ (recall that $\theta^{qD}=\theta^{D}$ and so $\theta^D\in \F_q$). We obtain the following result.

\begin{theorem}\label{thm:auxiliar-type-4}
Let $A=D(c)$ be an element of type $4$ and order $D$. Additionally, let $\theta$ be a root of $x^2-x-c$. Let $g_c(x), h_c(x)$ and $Q_c(x)$ be defined as in Proposition~\ref{prop:type-4-rational}. A monic irreducible polynomial $f$ of degree $Dm>2$ is such that 
$(x+1)^{Dm}f\left(\frac{c}{x+1}\right)=\theta^{Dm}f(x)$ (that is, $[A]\circ f=f$) if and only if $f$ is a monic irreducible polynomial of the form
$$F^{Q_c}=(h_c)^mF\left(\frac{g_c}{h_c}\right),$$
for some $F\in \F_q[x]$ of degree $m$.
\end{theorem}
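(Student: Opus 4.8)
The plan is to transport the equation $(x+1)^{Dm}f\left(\frac{c}{x+1}\right)=\theta^{Dm}f(x)$ into a statement about the fixed field $K^A=\F_q(Q_c)$ established in Proposition~\ref{prop:type-4-rational}, mirroring the classical argument for self-reciprocal polynomials and its quadratic-transformation generalization in~\cite{MP17}. First I would show the ``if'' direction, which is essentially a computation: starting from $f=F^{Q_c}=(h_c)^m F(g_c/h_c)$ with $F$ of degree $m$, I apply the substitution $x\mapsto \frac{c}{x+1}$ and use item (iii) of Proposition~\ref{prop:type-4-rational}, namely $g_c\!\left(\frac{c}{x+1}\right)=\theta^D\frac{g_c(x)}{(x+1)^D}$ and $h_c\!\left(\frac{c}{x+1}\right)=\theta^D\frac{h_c(x)}{(x+1)^D}$; since the ratio $Q_c=g_c/h_c$ is invariant under this substitution, the argument of $F$ is unchanged and the prefactors combine to produce exactly $\theta^{Dm}(x+1)^{-Dm}$, giving $(x+1)^{Dm}f\!\left(\frac{c}{x+1}\right)=\theta^{Dm}f(x)$. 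I would also remark that $F^{Q_c}$ is genuinely a polynomial of degree $Dm$: $h_c$ has degree $D-1$ and $g_c$ degree $D$ by item (i), so $h_c^m F(g_c/h_c)$ is a polynomial and, because $g_c$ and $h_c$ are coprime with $\deg g_c=D>\deg h_c$, no degree drop occurs when $F$ is monic of degree $m$.

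For the ``only if'' direction, let $f$ be a monic irreducible polynomial of degree $Dm>2$ satisfying the displayed identity, and let $\alpha\in\F_{q^{Dm}}$ be a root of $f$. The identity says $f$ divides $(x+1)^{Dm}f\!\left(\frac{c}{x+1}\right)$ up to the scalar $\theta^{Dm}$, so $\Gamma_A(\alpha)=\frac{c}{\alpha+1}$ is again a root of $f$; thus $\Gamma_A$ permutes the roots of $f$, and since $[A]$ has order $D$ the orbit of $\alpha$ under $\langle\Gamma_A\rangle$ has size dividing $D$. Here I would invoke the degree divisibility: by Theorem~\ref{thm:ST12-3.3} (or Lemma~\ref{ST4.5}) the orbit has size exactly $D$ because $Dm>2$ forces $D\mid Dm$ and no smaller orbit is compatible with irreducibility of degree $Dm$. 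Consequently $\beta:=Q_c(\alpha)=g_c(\alpha)/h_c(\alpha)$ is fixed by $\Gamma_A$ — it is a symmetric function of the orbit — so $\beta\in K^A$ evaluated at $\alpha$, meaning $\beta$ lies in the subfield of $\F_{q^{Dm}}$ generated over $\F_q$ by the $\Gamma_A$-invariant rational functions of $\alpha$. Since $[\F_q(\alpha):\F_q(\beta)]=D$ (the rational function $Q_c$ has degree $D$), we get $[\F_q(\beta):\F_q]=m$, so the minimal polynomial $F$ of $\beta$ over $\F_q$ is monic irreducible of degree $m$. Finally I would check that $f$ and $F^{Q_c}=h_c^m F(g_c/h_c)$ share the root $\alpha$: indeed $F(Q_c(\alpha))=F(\beta)=0$ and $h_c(\alpha)\ne 0$ (else $g_c(\alpha)=0$ too, contradicting coprimality of $g_c,h_c$ and $\alpha$ being a root of the irreducible $f$ of degree $>2$, using item (ii) that the roots of $h_c$ lie in $\F_{q^2}$). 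Both polynomials are monic of degree $Dm$, so irreducibility of $f$ forces $f=F^{Q_c}$.

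The main obstacle I anticipate is the bookkeeping in the ``only if'' direction: one must be careful that $\beta=Q_c(\alpha)$ is well-defined (that $h_c(\alpha)\ne 0$), that the orbit of $\alpha$ under $\Gamma_A$ has full size $D$ rather than a proper divisor (this is where the hypothesis $Dm>2$ and the type-$4$ structure enter, via Lemma~\ref{lem:order} giving $D\mid q+1$, $D>2$, together with Theorem~\ref{thm:ST12-3.3}), and that the degree count $[\F_q(\alpha):\F_q(\beta)]=D$ is exact — this last point follows because the $D$ roots $\Gamma_A^{j}(\alpha)$, $0\le j\le D-1$, all map to $\beta$ under $Q_c$ and are distinct, so the fiber of $Q_c$ over $\beta$ already has $D$ preimages, hence $\F_q(\alpha)$ has degree exactly $D$ over $\F_q(\beta)=K^A$ evaluated at $\alpha$, matching $\deg Q_c=D$ from Proposition~\ref{prop:type-4-rational}(i). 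Everything else is the substitution computation, which is routine given Proposition~\ref{prop:type-4-rational}(iii) and the normalization $\lambda=\theta^{Dm}$ already pinned down in the discussion preceding the theorem.
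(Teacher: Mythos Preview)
Your ``if'' direction is essentially the paper's: both check that $f/h_c^m=F(Q_c)$ is $\Gamma_A$-invariant via Proposition~\ref{prop:type-4-rational}(iii) and unwind to the displayed identity. For the converse the paper takes a different and shorter route than yours: it observes that the identity says precisely that the rational function $f/h_c^m\in K=\F_q(x)$ is fixed by $\Gamma_A$, hence lies in $K^A=\F_q(Q_c)$; thus $f/h_c^m=F(Q_c)$ for some $F\in\F_q(x)$, degree multiplicativity of rational maps forces $\deg F=m$, and a pole check (using $\gcd(g_c,h_c)=1$ and that the roots of $h_c$ lie in $\F_{q^2}$) shows $F$ is actually a polynomial. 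No root of $f$ is ever chosen.

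Your root-level approach is a legitimate alternative, but the step ``$[\F_q(\alpha):\F_q(\beta)]=D$'' is not justified by what you wrote. Knowing that the fiber $Q_c^{-1}(\beta)$ consists of the $D$ distinct points $\Gamma_A^{\,j}(\alpha)$ only tells you that $\alpha$ is a root of the degree-$D$ polynomial $g_c(X)-\beta\,h_c(X)\in\F_q(\beta)[X]$, giving $[\F_q(\alpha):\F_q(\beta)]\le D$; it does not rule out this polynomial factoring over $\F_q(\beta)$, which would make $\deg F>m$ and wreck the final degree comparison with $f$. What is missing is the opposite bound $\beta\in\F_{q^m}$. One clean way to supply it: $\Gamma_A$ and the Frobenius $\phi\colon z\mapsto z^q$ commute on the root set of $f$, and since $\phi$ acts there as a single $Dm$-cycle, its centralizer in the symmetric group on the roots is $\langle\phi\rangle$; hence $\Gamma_A$ restricted to the roots equals $\phi^{\ell m}$ for some $\ell$ coprime to $D$, so $\alpha^{q^m}$ already lies in the $\Gamma_A$-orbit of $\alpha$ and therefore $\beta^{q^m}=Q_c(\alpha^{q^m})=Q_c(\alpha)=\beta$. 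With this patch your argument goes through and has the pleasant feature of exhibiting $F$ explicitly as the minimal polynomial of $Q_c(\alpha)$; the paper's function-field argument simply sidesteps the whole issue.
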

\begin{proof}
If $$\frac{f}{h_c^m}=F\left(\frac{g_c}{h_c}\right)=F(Q_c),$$
for some $F\in \F_q[x]$, $\frac{f}{h_c^m}$ is fixed by the automorphism $\Gamma_{A}: x\mapsto \frac{c}{x+1}$ (see Proposition~\ref{prop:type-4-rational}). We recall that $\Gamma_{A}(h_c)=\theta^D\cdot \frac{h_c}{(x+1)^D}$ and so $\theta^{Dm}f(x)=(x+1)^{Dm}f\left(\frac{c}{x+1}\right)$. Conversely, if $$\theta^{Dm}\cdot f(x)=(x+1)^{Dm}f\left(\frac{c}{x+1}\right),$$ by the previous calculations we obtain $\Gamma_{A}\left(\frac{f}{h_c^m}\right)=\frac{f}{h_c^m}$, i.e., $\frac{f}{h_c^m}$ is fixed by $\Gamma_A$. Additionally, the roots of $h_c(x)$ lie in $\F_{q^2}$ (see Proposition~\ref{prop:type-4-rational}). Since $f$ is irreducible of degree $Dm>2$, it follows that $f$ and $h_c$ are relatively prime and so $\frac{f}{h_c^m}$ is a rational function of degree $Dm$. Since the fixed field $K^A$ is generated by $Q_c$, it follows that
$$\frac{f}{h_c^m}=F(Q_c),$$
for some $F\in\F_q(x)$. Clearly, $F$ is a rational function of degree $m$. We shall prove that $F$ is, in fact, a polynomial. Observe that, if $F$ were not a polynomial, then it would have a pole at some element $\gamma\in \overline{\F}_q$. This implies that $\frac{f}{h_c^n}$ would have a pole at some root $\lambda\in \overline{\F}_q$ of $g_c-\gamma h_c$. However, since $f$ is a polynomial, the possible poles of $\frac{f}{h_c^m}$ are at the roots of $h_c$. In particular, $\lambda$ would be a root of $g_c-\gamma h_c$ and $h_c$. This contradicts the fact that $h_c$ and $g_c$ are relatively prime (see Proposition~\ref{prop:type-4-rational}). Therefore, $F$ is a polynomial and and so $f=(h_c)^mF(Q_c)=F^{Q_c}$.
\end{proof}

Following the proof of Theorem~\ref{thm:types1-3}, we extend the previous result to general elements of type 4.
\begin{prop}\label{prop:type-4!}
Let $[A]\in \PGL_2(\F_q)$ be an element of type $4$ and order $D$. Then there exists a rational function $Q_A=\frac{g_A(x)}{h_A(x)}$ of degree $D$ with the property that, for any monic irreducible polynomial $f$ of degree $Dm>2$, $[A]\circ f=f$ if and only if $f$ is a monic irreducible of the form
$$F^{Q_A}=(h_A)^m\cdot F\left(\frac{g_A}{h_A}\right),$$
for some $F\in \F_q[x]$ of degree $m$. 
\end{prop}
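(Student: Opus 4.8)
The plan is to mimic the proof of Theorem~\ref{thm:types1-3}, reducing to the reduced-form case already settled in Theorem~\ref{thm:auxiliar-type-4} by means of conjugation. By Theorem~\ref{thm:types} there are an element $[\mathcal A]=[D(c)]$ of type $4$ in reduced form and $[P]\in\PGL_2(\F_q)$ with $[A]=[P]\cdot[\mathcal A]\cdot[P]^{-1}$; write $P=\left(\begin{matrix}w_1&w_3\\w_2&w_4\end{matrix}\right)$. By Lemma~\ref{lem:conj-inv}, for a monic irreducible $f$ of degree $Dm>2$ one has $[A]\circ f=f$ if and only if $[\mathcal A]\circ g=g$, where $g=[P]^{-1}\circ f$ is again monic irreducible of degree $Dm$; and by Theorem~\ref{thm:auxiliar-type-4} the latter holds exactly when $g=(h_c)^m F\!\left(\frac{g_c}{h_c}\right)$ for some $F\in\F_q[x]$ of degree $m$, with $g_c,h_c,Q_c$ as in Proposition~\ref{prop:type-4-rational}.

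Next I would compute $f=[P]\circ g$ explicitly. Since $P\circ g=(w_3x+w_4)^{Dm}\,g\!\left(\frac{w_1x+w_2}{w_3x+w_4}\right)$, I set
\[
g_A(x):=(w_3x+w_4)^{D}\,g_c\!\left(\frac{w_1x+w_2}{w_3x+w_4}\right),\qquad
h_A(x):=(w_3x+w_4)^{D}\,h_c\!\left(\frac{w_1x+w_2}{w_3x+w_4}\right),
\]
homogenising with the exponent $D$ because $\deg g_c=D$ while $\deg h_c=D-1$. Then $(w_3x+w_4)^{Dm}h_c(\cdots)^m=h_A^m$ and $g_c(\cdots)/h_c(\cdots)=g_A/h_A$, so $P\circ g=h_A^m F\!\left(\frac{g_A}{h_A}\right)$ and hence $f$ is a nonzero scalar multiple of $h_A^m F\!\left(\frac{g_A}{h_A}\right)$; absorbing the scalar into $F$ gives $f=F_0^{Q_A}$ with $Q_A=\frac{g_A}{h_A}$ and $\deg F_0=\deg F=m$.

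It remains to verify the required properties of $Q_A$. That $g_A,h_A\in\F_q[x]$ is immediate from $g_c,h_c\in\F_q[x]$ and $P\in\GL_2(\F_q)$. For coprimality and the degree count I would pull back the factorisations from Proposition~\ref{prop:type-4-rational}(i), obtaining
\[
g_A-\theta h_A=\bigl((w_1+\theta^q w_3)x+(w_2+\theta^q w_4)\bigr)^{D},\qquad
g_A-\theta^q h_A=\bigl((w_1+\theta w_3)x+(w_2+\theta w_4)\bigr)^{D}.
\]
Since $\theta,\theta^q\notin\F_q$, each of these two linear forms is nonzero, and they are not proportional (that would force $\det P=0$); it follows that $g_A$ and $h_A$ share no root and that $\max(\deg g_A,\deg h_A)=D$, so $Q_A$ is a rational function of degree $D$. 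Finally, for the converse inclusion --- that every monic irreducible polynomial of the form $F^{Q_A}$ of degree $Dm$ is $[A]$-invariant --- I would use the identity $Q_A=Q_c\circ\psi_P$, where $\psi_P$ is the transformation $x\mapsto\frac{w_1x+w_2}{w_3x+w_4}$ attached to $P$: applying $[P]^{-1}\circ-$ to $F^{Q_A}$ then produces a polynomial of the shape $(F')^{Q_c}$, monic irreducible of degree $Dm>2$ by Lemma~\ref{lem:aux-mobius-action}, which is $[\mathcal A]$-invariant by Theorem~\ref{thm:auxiliar-type-4}, and applying $[P]\circ-$ again yields $[A]\circ F^{Q_A}=F^{Q_A}$.

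The step I expect to be the main obstacle is the bookkeeping forced by the degree mismatch $\deg h_c=D-1<D=\deg g_c$: one must homogenise with the exponent $D$ rather than $D-1$, and then check that the extra root thereby introduced at the pole $x=-w_4/w_3$ lies on $h_A$ and not on $g_A$, so that $\gcd(g_A,h_A)=1$ survives; this is precisely where the explicit factored forms of Proposition~\ref{prop:type-4-rational}(i) are indispensable. Once this and the compatibility of the operation $F\mapsto F^{Q}$ with M\"obius substitution are established, the rest is routine tracking of normalising constants.
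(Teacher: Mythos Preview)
Your proposal is correct and follows the same overall architecture as the paper's proof: conjugate to the reduced form $[D(c)]$ via Theorem~\ref{thm:types} and Lemma~\ref{lem:conj-inv}, invoke Theorem~\ref{thm:auxiliar-type-4} there, and then transport $Q_c$ back through $P$ to obtain $Q_A$. The definitions of $g_A,h_A$ you write down coincide with the paper's $G_A,H_A$ (up to the paper's several typographical slips between $P$ and $P^{-1}$).

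Where you genuinely diverge is in the verification that $Q_A$ has degree $D$ with $\gcd(g_A,h_A)=1$. The paper argues in two separate pieces: first it appeals to item~(i) of Lemma~\ref{lem:propertiesAf} to ensure that at least one of the degrees of $P\circ g_c$ and $P\circ h_c$ is preserved, and then it runs a B\'ezout argument, writing $g_cR_1+h_cR_2=1$ and using items~(ii) and~(iii) of Lemma~\ref{lem:propertiesAf} to show that any common factor of $G_A$ and $H_A$ must divide a power of $w_3x+w_4$, which in turn is checked to be coprime to $G_A$. Your route is shorter and more direct: pulling back the identities $g_c-\theta h_c=(x+\theta^q)^D$ and $g_c-\theta^q h_c=(x+\theta)^D$ from Proposition~\ref{prop:type-4-rational}(i) gives explicit $D$th powers of linear forms for $g_A-\theta h_A$ and $g_A-\theta^q h_A$; since $\theta\notin\F_q$ forces each form to be nonzero and $\det P\ne 0$ forces them to be non-proportional, both coprimality and $\max(\deg g_A,\deg h_A)=D$ drop out at once. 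The paper's approach has the mild advantage of being uniform across types (it reuses Lemma~\ref{lem:propertiesAf} exactly as in Theorem~\ref{thm:types1-3}), while yours exploits the specific type-$4$ arithmetic already in hand and avoids the somewhat delicate bookkeeping with the extra factor $w_3x+w_4$.
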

\begin{proof}
From Theorem~\ref{thm:types}, there exist an element $[D(c)]$ of type $4$ and $[P]\in \PGL_2(\F_q)$ such that $[A]=[P]\cdot [D(c)]\cdot [P]^{-1}$. Clearly, $[D(c)]$ also has order $D$. Let $f$ be a monic irreducible polynomial of degree $Dm>2$. From Lemma~\ref{lem:conj-inv}, $[A]\circ f=f$ if and only if $[D(c)]\circ g=g$, where $g=[P]^{-1}\circ f$. Write $P=\left(\begin{matrix}
w_1&w_3\\ w_2&w_4
\end{matrix}\right)$. Let $g_c, h_c$ and $Q_c$ be as in Proposition~\ref{prop:type-4-rational}. We observe that, from Theorem~\ref{thm:auxiliar-type-4}, $[D(c)]\circ g=g$ if and only if $g=(h_c)^mG(Q_c)=G^{Q_c}$ for some polynomial $G\in \F_q[x]$ of degree $m$. Therefore, $[A]\circ f=f$ if and only if $f=[P]\circ G^{Q_c}$. If we write $G(x)=\sum_{i=0}^{m}a_ix^i$, it follows that $G^{Q_c}=(h_c)^m\cdot G(Q_c)=\sum_{i=0}^ma_i(g_c)^i(h_c)^{m-i}$ and so
$$[P]^{-1}\circ G_{Q_c}=\lambda\cdot \sum_{i=0}^ma_i(G_A)^i(H_A)^{m-i},$$
where $\lambda \in \F_q^*$, $G_A=(w_3x+w_4)^D\cdot g_c\left(\frac{w_1x+w_2}{w_3x+w_4}\right)=P^{-1}\circ g_c$ and $$H_A=(w_3x+w_4)^D\cdot h_A\left(\frac{w_1x+w_2}{w_3x+w_4}\right)=(w_3x+w_4)\cdot (P^{-1}\circ h_c).$$

In other words, $f=(H_A)^m\cdot F(Q_A)=F^{Q_A}$, where $F=\lambda \cdot G$ has degree $m$ and $Q_A=\frac{G_A}{H_A}$. Now, we need to show that $Q_A$ is indeed a rational function of degree $D$. We observe that, from the definition of $G_A$ and $H_A$, at least one of these polynomials have degree $D$ if and only if $P^{-1}\circ g_c$ has degree $D$ or $P^{-1}\circ h_c$ has degree $D-1$. In other words, the compositions $P^{-1}\circ$ preserves the degree of at least one of the polynomials $g_c, h_c$. From item (i) of Lemma~\ref{lem:propertiesAf}, this does not occur if and only if $w_2\ne 0$ and both $g_c$ and $h_c$ vanish at $w_1/w_2\in \F_q$. However, we have seen that $g_c$ and $h_c$ are relatively prime and so they cannot have a common root. From now, we just need to prove that $G_A$ and $H_A$ are relatively prime. Since $P\in \GL_2(\F_q)$, $w_1x+w_2$ and $w_3x+w_4$ are relatively prime: expanding $G_A$, one can see that $w_3+w_4x$ and $G_A$ are relatively prime. So it suffices to prove that $G_A=P^{-1}\circ g_c$ and $H=P^{-1}\circ h_c$ are relatively prime. Since $h_c$ and $g_c$ are relatively prime, there exist polynomials $R_1$ and $R_2$ in $\F_q[x]$ such that $g_cR_1+h_cR_2=1$: in this case, $g_cR_1$ and $h_cR_2$ have the same degree $n>1$ and $g_cR_1\ne -h_cR_2$. From items (ii) and (v) of~Lemma~\ref{lem:propertiesAf}, it follows that $$R_3=P^{-1}\circ (g_cR_1)+P^{-1}\circ(h_cR_2)=G_A\cdot (P^{-1}\circ R_1)+H\cdot (P^{-1}\circ R_2),$$ divides $(w_3x+w_4)^{n-1}\cdot (A\circ (g_cR_1+h_cR_2))=(w_3x+w_4)^{n-1}\cdot(A\circ 1)=(w_3x+w_4)^{n-1}$. In particular, any common divisor of $H$ and $G_A$ divides $R_3$, hence it divides $(w_3x+w_4)^{n-1}$. Recall that $G_A$ is relatively prime with $w_3x+w_4$ and so we conclude that $G_A$ and $H$ are relatively prime.
\end{proof}
\subsection{Some examples}
We finish this section exemplifying the applicability of Theorem~\ref{thm:chap6-main-2} with $A=\left(\begin{matrix}
0&1\\ -1& 1
\end{matrix}\right)$, that turns out to be of three different types, according to the class of $q$ modulo $3$. We observe that $A^3=-I$ and so $[A]^3=[I]$. Hence, $[A]$ has order $D=3$. Its characteristic polynomial equals $x^2-x+1$. In particular, there exists a rational function $Q_A=\frac{g_A}{h_A}$ of degree $3$ such that the $[A]$ invariants of degree $3m$ are exactly the monic irreducible polynomials of the form $F^{Q_A}=(h_A)^mF\left(\frac{g_A}{h_A}\right)$, where $F\in \F_q[x]$ has degree $m$ and $Q_A$ is given as follows.

\begin{itemize}
\item For $q\equiv 1\pmod 3$, $x^2-x+1$ is the minimal polynomial of the primitive $d$-th roots of unity, where $d=6$ if $q$ is odd and $d=3$ if $q$ is even. In any case, $q\equiv 1\pmod d$ and so there exists $\theta\in \F_q$ such that $\theta^2-\theta+1=0$, i.e., $\theta$ is an eigenvalue of $A$. We have that $\theta, -\theta^2\in \F_q$ are the eigenvalues of $A$. In particular, $A$ is of type $1$ and $[A]=[P]\cdot [A(-\theta)]\cdot [P]^{-1}$, where $P=\left(\begin{matrix}
1&1\\ \theta& -\theta^2
\end{matrix}\right)$. Therefore, according to the proof of~Theorem~\ref{thm:types1-3}, we have that $$Q_A=\frac{(x+\theta)^3}{(x-\theta^2)^3}=\frac{x^3+3\theta x^2+3(\theta-1)x-1}{x^3-3(\theta-1)x^2-3\theta x-1}.$$

\item For $q\equiv 0\pmod 3$, $x^2-x+1=x^2+2x+1=(x+1)^2$ and so $A$ is of type $2$ and $[A]=[P]\cdot [\mathcal E]\cdot [P]^{-1}$, where $P=\left(\begin{matrix}
1&1\\ 1& -1
\end{matrix}\right)$. Therefore, according to the proof of~Theorem~\ref{thm:types1-3}, we have $$Q_A=\frac{(x+1)^3-(x+1)(x-1)^2}{(x-1)^3}=\frac{x^2+x}{x^3-1}.$$

\item For $q\equiv 2\pmod 3$, $x^2-x+1$ is the minimal polynomial of the primitive $d$-th roots of unity, where $d=6$ if $q$ is odd and $d=3$ if $q$ is even. In any case, $q\equiv 5\pmod d$ and so $x^2-x+1$ is irreducible over $\F_q$.  In fact, $A$ is of type $4$ in reduced form and we have $A=D(-1)$. Let $\theta, \theta^q$ be th eigenvalues of $A$. Therefore, $\theta^{q}+\theta=1$, $\theta^2=\theta-1$ and $\theta^{3}=-1$. According to Theorem~\ref{thm:auxiliar-type-4} and Proposition~\ref{prop:type-4-rational}, we have that $$Q_A=\frac{\theta^q(x+\theta^q)^3-\theta(x+\theta)^3}{(x+\theta^q)^3-(x+\theta)^3}=\frac{(\theta^q-\theta)(x^3+3x^2-1)}{(\theta^q-\theta)(3x^2+3x)}=\frac{x^3+3x^2-1}{3x^2+3x}.$$
\end{itemize}

\section{On the number of $[A]$-invariants}
In this section, we provide complete enumeration formulas for the number of $[A]$-invariants of degree $n>2$. Namely, we obtain the following result.
\begin{theorem}\label{thm:chap6-main-1}
Let $[A]\in \PGL_2(\F_q)$ be an element of order $D=\ord([A])$. Then, for any integer $n>2$, the number $\n_{A}(n)$ of $[A]$-invariants of degree $n$ is \emph{zero} if $n$ is not divisible by $D$ and, for $n=Dm$ with $m\in \N$, the following hold:
\begin{equation}\label{eq:invariants-formula}\n_{A}(Dm)=\frac{\varphi(D)}{Dm}\left(c_{A}+\sum_{d|m\atop \gcd(d, D)=1}\mu(d)(q^{m/d}+\eta_{A}(m/d))\right),\end{equation}
where $\varphi$ is the Euler Phi function, $\eta_{A}:\N\to \N$ and $c_{A}\in \Z$ are given as follows
\begin{enumerate}
\item $c_{A}=0$ and $\eta_{A}(t)=-1$ if $A$ is of type $1$,
\item $c_{A}=0$ and $\eta_{A}(t)=0$ if $A$ is of type $2$, 
\item $c_{A}=-1$ and $\eta_{A}(t)=0$ if $A$ is of type $3$,
\item $c_{A}=0$ and $\eta_{A}(t)=(-1)^{t+1}$ if $A$ is of type $4$.
\end{enumerate}
\end{theorem}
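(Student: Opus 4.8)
The plan is a reduction to reduced forms followed by a degree count together with a Möbius‑type inversion, built on the auxiliary lemmas of Section~2.

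\textbf{Reduction and the main identity.} If $D\nmid n$ and $n>2$, then $\n_A(n)=0$ is immediate from Theorem~\ref{thm:ST12-3.3}. So assume $n=Dm$. Since $\n_A$ is a conjugacy invariant (Theorem~\ref{thm:conjugates-main}), by Theorem~\ref{thm:types} we may assume $A$ is in reduced form; then its type $t$ is one of $1,2,3,4$ and $A^{j}$ is explicit for every $j$ coprime to $D$. For $f\in\C_A(Dm)$ with $Dm\ge 3$, Lemma~\ref{ST4.5}(i) attaches a unique $\ell\in(\Z/D\Z)^{*}$ with $f\mid F_{A,\ell m}$, and Lemma~\ref{power} identifies the degree‑$Dm$ irreducible factors of $F_{A,\ell m}$ with those of $F_{A^{j},m}$ for $j\equiv \ell^{-1}\pmod D$. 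Since every degree‑$Dm$ irreducible factor of $F_{A^{j},m}$ is $[A^{j}]$‑invariant (Theorem~\ref{thm:ST12-4.2}) and $\ell\mapsto\ell^{-1}$ permutes $(\Z/D\Z)^{*}$, this yields
\[
\n_A(Dm)=\sum_{j\in(\Z/D\Z)^{*}}\#\bigl\{\text{degree-}Dm\text{ irreducible factors of }F_{A^{j},m}\bigr\}.
\]

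\textbf{Recursion and Möbius inversion.} Each $F_{A^{j},m}$ is separable (a short computation of $\gcd(F_{A^{j},m},F'_{A^{j},m})$, using $\det A^{j}\neq0$), so its degree is the sum of the degrees of its irreducible factors, and by Lemma~\ref{ST4.5}(ii) these degrees lie in $\{Dk : k\mid m,\ \gcd(m/k,D)=1\}\cup\{1,2\}$. Grouping the degree‑$Dk$ factors, re‑expressing (via Lemma~\ref{power}) the counts with $k<m$ through $F_{A^{j'},k}$, and summing over $j$, one obtains
\[
\sum_{\substack{k\mid m\\ \gcd(m/k,D)=1}}Dk\cdot\n_A(Dk)\;=\;R(m),\qquad
R(m):=\sum_{j\in(\Z/D\Z)^{*}}\bigl(\deg F_{A^{j},m}-\tilde\varepsilon_{A^{j}}(m)\bigr),
\]
where $\tilde\varepsilon_B(m)$ collects the degrees of the factors of $F_{B,m}$ \emph{not} of the shape $Dk$ (the linear ones, and the quadratic ones except in the small overlap case $D=2$, $m$ odd, which needs an ad hoc check). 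Möbius inversion over the sub‑monoid of divisors of $m$ coprime to $D$ then gives $\n_A(Dm)=\tfrac1{Dm}\sum_{d\mid m,\ \gcd(d,D)=1}\mu(d)\,R(m/d)$, so everything reduces to computing $R(N)$.

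\textbf{Computing $R(N)$.} On reduced forms one has $\deg F_{B,N}=q^{N}+1$ if $B$ is not lower triangular (types $3,4$) and $q^{N}$ otherwise (types $1,2$); the linear factors of $F_{B,N}$ are the $\F_q$‑roots of $bx^{2}+(d-a)x-c$, i.e.\ the $\F_q$‑rational fixed points of the Möbius map of $B$, hence independent of $N$; and the quadratic factors count $[B]$‑invariant irreducible quadratics, which one reads off the action on $\mathbb P^{1}(\F_{q^{2}})$ and which depend on $N$ only through its parity when the eigenvalues of $B$ lie in $\F_{q^{2}}\setminus\F_q$. Carrying out this bookkeeping type by type (using $D\mid q-1$ for $t=1$, $D=p$ for $t=2$, $D\mid q+1$ for $t=3,4$) presents $R(N)$ in the form $\varphi(D)\bigl(q^{N}+\eta_A(N)\bigr)$ up to a remainder whose $D$‑coprime Möbius transform equals $\varphi(D)c_A$; this is precisely the asserted formula, and the stated values of $\eta_A$ and $c_A$ are exactly what the fixed‑point and quadratic‑factor counts produce.

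\textbf{Main obstacle.} The hard case is $t=4$, where it is cleaner to argue through Section~4: by Theorem~\ref{thm:chap6-main-2} the degree‑$Dm$ invariants are the irreducible $F^{Q_A}$ with $\deg F=m$, and one checks that $F^{Q_A}$ is irreducible exactly when $F$ is irreducible of degree $m$ and the \emph{fibre} $g_A(x)-\beta h_A(x)$ (of degree $D$ over $\F_q(\beta)=\F_{q^{m}}$) is irreducible over $\F_{q^{m}}$, where $\beta$ is a root of $F$; hence $\n_A(Dm)=\tfrac1m\#\{\beta\in\F_{q^{m}}:\deg_{\F_q}\beta=m,\ g_A-\beta h_A\text{ irreducible over }\F_{q^{m}}\}$. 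For the complicated $Q_c$ of Proposition~\ref{prop:type-4-rational} one must then understand how the $D$ roots of $g_c-\beta h_c$ are distributed: they form a single orbit of the order‑$D$ Möbius map whose eigenvalues lie in $\F_{q^{2}}\setminus\F_q$, so the relevant torsion is the norm‑one subgroup $\mu_{q+1}\subseteq\F_{q^{2}}^{*}$ and Frobenius must be followed through the quadratic twist $\F_{q^{2}}/\F_q$. Tracking this carefully is what forces the parity‑dependent correction $\eta_A(t)=(-1)^{t+1}$ (which likewise governs $t=3$, the case $D=2$) and is where one verifies that divisors $d$ sharing a common factor with $D$ contribute nothing to the Möbius sum.
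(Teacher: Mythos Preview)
Your reduction, recursion, and M\"obius-inversion scheme is exactly the paper's strategy, and for types $1$--$3$ the paper simply cites Garefalakis and Mattarei--Pizzato rather than redoing the bookkeeping you sketch in ``Computing $R(N)$''.

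Where you diverge is the ``Main obstacle'' paragraph. You pivot to the rational-function parametrization of Section~4 for type~$4$, proposing to count irreducible $F^{Q_A}$ by analyzing irreducibility of the fibre $g_c-\beta h_c$ over $\F_{q^m}$ and tracking Frobenius through the norm-one torus in $\F_{q^2}^{*}$. The paper does \emph{not} do this. It stays entirely within the $F_{A^j,m}$ framework: the key type-$4$ input is Lemma~\ref{lem:type-4-enum-2}, which uses the explicit diagonalization of $A^j$ over $\F_{q^2}$ (Proposition~\ref{prop:powers-A}) to show that $F_{A^j,m}$ has degree $q^m+1$, no linear factor, and exactly one irreducible quadratic factor when $m$ is even and none when $m$ is odd. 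This already gives $R(m)=\varphi(D)\bigl(q^{m}+(-1)^{m+1}\bigr)$, and the M\"obius inversion you set up in your second paragraph finishes immediately. Your fibre-irreducibility route is a legitimate alternative, but it is longer (``one checks that $F^{Q_A}$ is irreducible exactly when \ldots'' is itself a lemma requiring proof) and is not needed.

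One imprecision worth flagging: in ``Computing $R(N)$'' you say the quadratic factors of $F_{B,N}$ ``count $[B]$-invariant irreducible quadratics''. Not quite: they are only those $[B]$-invariant quadratics whose roots $\gamma\in\F_{q^2}$ satisfy $F_{B,N}(\gamma)=0$, and this depends on $N\bmod 2$ (since $\gamma^{q^N}$ is $\gamma$ or $\gamma^q$). Pinning this down precisely is the whole content of Lemma~\ref{lem:type-4-enum-2} and is where the parity term $(-1)^{t+1}$ comes from; it is the heart of the type-$4$ case and cannot be left as a one-line remark about ``reading off the action on $\mathbb P^1(\F_{q^2})$''.
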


\begin{remark}
We emphasize that cases $1, 2$ and $3$ of the previous theorem are easily deduced from some well known results that are further referenced. Nevertheless, the case $3$ is far the most complicated and is our genuine contribution. We group them together just to provide a unified statement.
\end{remark}

We naturally study the number of $[A]$-invariants according to the type of $A$. From Theorem~\ref{thm:conjugates-main}, it suffices to consider elements of $\PGL_2(\F_q)$ of type $1\le t\le 4$ in reduced form. We start with elements of type $1$ and $2$. If $[A]$ has type $t=1, 2$ and is in reduced form, we see that $[A]\circ f$ corresponds to $f(ax)$ and $f(x+1)$ respectively. Additionally, we have seen that, in these cases, $[A]\circ f=f$ if and only if $A\circ f=f$. Hence we are looking for the monic irreducible polynomials of degree $n$ satisfying $f(x)=f(ax)$ or $f(x)=f(x+1)$. The polynomials satisfying such identities were previously explored by Garefalakis~\cite{Gar11}. From Theorems~2 and~4 of~\cite{Gar11}, we obtain the following result.

\begin{lemma}
Let $[A]\in \PGL_2(\F_q)$ be an element of type $t\le 2$ and order $D$. Then, for any integer $n>2$, the number $\n_{A}(n)$ of $[A]$-invariants of degree $n$ is \emph{zero} if $n$ is not divisible by $D$ and, for $n=Dm$ with $m\in \N$, the following hold:
\begin{equation*}\n_{A}(Dm)=\frac{\varphi(D)}{Dm}\sum_{d|m\atop \gcd(d, D)=1}\mu(d)(q^{m/d}-\varepsilon),\end{equation*}
where $\varepsilon=1$ if $t=1$ and $\varepsilon=0, D=p$ if $t=2$.
\end{lemma}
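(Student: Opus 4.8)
The plan is to reduce to the case $t\le 2$ in reduced form using Theorem~\ref{thm:conjugates-main}, which tells us that $\n_A(n)$ depends only on the conjugacy class of $[A]$, and Lemma~\ref{lem:order}, which pins down the order $D$: for $t=1$, $D\mid q-1$, while for $t=2$, $D=p$. Once $A$ is in reduced form, we have already observed that $[A]\circ f=f$ is equivalent to the \emph{linear} identity $f(ax)=f(x)$ (when $t=1$, with $a$ of multiplicative order $D$) or $f(x+1)=f(x)$ (when $t=2$). These are precisely the invariance conditions studied by Garefalakis, so the counts of degree-$n$ irreducible polynomials satisfying them are given by Theorems~2 and~4 of~\cite{Gar11}.

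First I would record that $\n_A(n)=0$ when $D\nmid n$; this is immediate from Theorem~\ref{thm:ST12-3.3} once we note that any such $f$ of degree $n>2$ is fixed by the cyclic group $\langle[A]\rangle$ of order $D$, so $D\mid n$. Next, writing $n=Dm$, I would quote the enumeration formula of~\cite{Gar11} for each of the two cases. For $t=1$: the monic irreducibles $f$ of degree $Dm$ with $f(ax)=f(x)$ are, by Corollary~\ref{cor:type-1}, exactly those of the form $g(x^D)$ with $g$ irreducible of degree $m$ and $g(0)\ne 0$; counting these via Möbius inversion over the divisors $d\mid m$ coprime to $D$ yields the stated sum with $\varepsilon=1$, the $-1$ accounting for the excluded factor $x$ (equivalently, the condition $g(0)\ne0$). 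For $t=2$: by Corollary~\ref{cor:type-2} the relevant $f$ are those of the form $g(x^p-x)$ with $g$ irreducible of degree $m$, and no extra divisibility constraint appears because $x^p-x$ has simple structure, giving $\varepsilon=0$; one also records $D=p$.

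The substantive content is therefore the two enumeration identities themselves, but these are exactly Theorems~2 and~4 of~\cite{Gar11}, so within this paper the argument is just a matter of matching notation: rewriting Garefalakis's formulas in the $\frac{\varphi(D)}{Dm}\sum_{d\mid m,\ \gcd(d,D)=1}\mu(d)(q^{m/d}-\varepsilon)$ shape. The one point requiring care is the compatibility of the coprimality condition $\gcd(d,D)=1$ in the Möbius sum with the hypotheses $D\mid q-1$ (type $1$) and $D=p$ (type $2$): in the latter case $\gcd(d,p)=1$ is automatic for $d\mid m$ unless $p\mid m$, and one must check the edge behaviour matches Garefalakis's statement. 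I expect the only mild obstacle to be this bookkeeping of the index set and the constant $\varepsilon$; there is no genuine difficulty, since all the hard counting has already been done in~\cite{Gar11} and all the structural reductions are supplied by Theorem~\ref{thm:conjugates-main}, Corollary~\ref{cor:type-1}, and Corollary~\ref{cor:type-2}.
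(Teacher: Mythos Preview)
Your proposal is correct and follows essentially the same route as the paper: reduce to reduced form via Theorem~\ref{thm:conjugates-main}, identify $[A]\circ f=f$ with the linear identities $f(ax)=f(x)$ or $f(x+1)=f(x)$, and then invoke Theorems~2 and~4 of~\cite{Gar11}. The paper's argument is exactly this citation, with no further detail; your additional remarks about Corollaries~\ref{cor:type-1} and~\ref{cor:type-2} and the bookkeeping on the index set are sound but go slightly beyond what the paper itself records.
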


Recall that an element of type $3$ in reduced form equals $C(b)$ for some non square $b\in \F_q^*$ and $[C(b)]$ has order two. Also, for a monic irreducible polynomial of degree $2m>3$, we have seen that $[C(b)]\circ f=f$ if and only if $x^{2m}f\left(\frac{b}{x}\right)=b^m\cdot f(x)$ (see the comments after Corollary~\ref{cor:type-2}). In particular, from Corollary~7 of~\cite{MP17}, we obtain the following result.

\begin{lemma}
Let $[A]\in \PGL_2(\F_q)$ be an element of type $3$. In particular, its order is $D=2$. Then, for any integer $n>2$, the number $\n_{A}(n)$ of $[A]$-invariants of degree $n$ is \emph{zero} if $n$ is not divisible by $D$ (i.e., $n$ is odd) and, for $n=2m$ with $m\in \N$, the following hold:
\begin{equation*}\n_{A}(2m)=\frac{1}{2m}\left(-1+\sum_{d|m\atop \gcd(d, 2)=1}\mu(d)q^{m/d}\right).\end{equation*}
\end{lemma}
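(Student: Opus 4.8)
The plan is to reduce the count $\n_A(2m)$ of irreducible polynomials of degree $2m$ satisfying $x^{2m}f(b/x) = b^m f(x)$ to a problem about ordinary irreducible polynomials, using the bijection supplied by Lemma~\ref{lem:type-3}: every such $f$ of degree $2m > 3$ is of the form $x^m g(x + b/x)$ for a unique $g \in \F_q[x]$ of degree $m$, and conversely each $g$ of degree $m$ produces a polynomial $x^m g(x+b/x)$ of degree $2m$ which is irreducible over $\F_q$ precisely when $g$ is irreducible over $\F_q$ \emph{and} the roots of $g$ are not of the form $\gamma + b/\gamma$ with $\gamma \in \F_q$. (This last exceptional case is exactly what causes the $-1$ correction term.) So the first step is to make this bijection precise and identify exactly which monic irreducible $g$ of degree $m$ fail to produce an irreducible image — I expect this to be the single degenerate quadratic case, contributing the $-1$.

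Next I would count monic irreducible $g$ of degree $m$ by the classical Gauss formula, $N_q(m) = \tfrac1m\sum_{d\mid m}\mu(d)q^{m/d}$, and then subtract the contributions that do not give irreducible images. Alternatively, and more cleanly, I would invoke Corollary~7 of~\cite{MP17} directly, which already packages the count of irreducible quadratic transformations $x^m g(x+b/x)$: that corollary states precisely that the number of monic irreducible $f$ of degree $2m$ with $x^{2m}f(b/x) = b^m f(x)$ equals $\tfrac{1}{2m}\bigl(-1 + \sum_{d\mid m,\ \gcd(d,2)=1}\mu(d)q^{m/d}\bigr)$. The restriction to odd $d$ in the sum reflects that when $m$ is even one must exclude the self-reciprocal-type polynomials already counted at half the degree, and the $-1$ is the degenerate-root correction. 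Since the hypotheses of that corollary are met (namely $b$ is a nonsquare, so $x^2 - b$ is irreducible and $x + b/x$ has degree $2$), the formula transfers verbatim.

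Finally I would handle the bookkeeping: the statement also asserts $\n_A(n) = 0$ when $n$ is odd, which is immediate from Theorem~\ref{thm:ST12-3.3} (any invariant of degree $\ge 3$ has degree divisible by $D = 2$); and one should note that the given closed form, when specialized to small $m$, correctly yields the count including the low-degree cases not covered by the $2m > 3$ hypothesis of Lemma~\ref{lem:type-3} — these boundary values are subsumed in the cited corollary. The main obstacle is purely one of citation hygiene: making sure the normalization of \cite{MP17}'s Corollary~7 (monic vs.\ not, the exact index set of the Möbius sum, the treatment of $m=1$) matches the statement here; there is essentially no new mathematical content beyond invoking Lemma~\ref{lem:type-3}, Theorem~\ref{thm:ST12-3.3}, and that corollary.
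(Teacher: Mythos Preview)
Your proposal is correct and follows essentially the same route as the paper: reduce to the reduced form $A=C(b)$ with $b$ a nonsquare, translate $[C(b)]\circ f=f$ into the functional equation $x^{2m}f(b/x)=b^m f(x)$, and then invoke Corollary~7 of~\cite{MP17} verbatim. The paper's ``proof'' is literally just that citation preceded by the reduction paragraph, so your version is if anything more detailed; your side remarks about the bijection with $g$ and the source of the $-1$ are correct intuition but not needed once you cite \cite{MP17}.
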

In particular, cases $1$, $2$ and $3$ of Theorem~\ref{thm:chap6-main-1} follow directly from the previous lemmas.

\subsection{Elements of type 4}
Here we establish the last case of Theorem~\ref{thm:chap6-main-1}, that corresponds to elements of type $4$. Again, one may consider only elements of type $4$ in reduced form. We emphasize that the previous enumeration formulas for elements of type $t\le 3$ are based in the \emph{Mobius Inversion Formula} and its generalizations. This inversion formula is often employed when considering the enumeration of irreducible polynomials with specified properties. We recall an interesting generalization of this result.

\begin{theorem}\label{mobius}
Let $\chi:\mathbb N\to \mathbb C$ be a completely multiplicative function (which is, in other words, an homomorphism between the monoids $(\mathbb N, +)$ and $(\mathbb C, \cdot)$). Also let $\mathcal L, \mathcal K:\mathbb N\to \mathbb C$ be two functions such that
$$\mathcal{L}(n)=\sum_{d|n}\chi(d)\cdot \mathcal{K}\left(\frac{n}{d}\right), n\in \mathbb N.$$
Then,
$$\mathcal{K}(n)=\sum_{d|n}\chi(d)\cdot\mu(d)\cdot \mathcal{L}\left(\frac{n}{d}\right), n\in \mathbb N.$$
\end{theorem}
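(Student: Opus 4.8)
The plan is to prove Theorem~\ref{mobius} by direct substitution and reindexing, mimicking the classical proof of M\"obius inversion; the only property of $\chi$ that enters is complete multiplicativity, $\chi(de)=\chi(d)\chi(e)$ for all $d,e\in\mathbb N$, which in particular forces $\chi(1)=1$ (we tacitly assume $\chi\not\equiv 0$, since otherwise the hypothesis is vacuous while the conclusion may fail).

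First I would substitute the defining relation $\mathcal L(n)=\sum_{d\mid n}\chi(d)\mathcal K(n/d)$ into the right-hand side of the asserted identity, obtaining
\[\sum_{d\mid n}\chi(d)\mu(d)\,\mathcal L(n/d)=\sum_{d\mid n}\ \sum_{e\mid (n/d)}\chi(d)\mu(d)\chi(e)\,\mathcal K\!\left(\tfrac{n}{de}\right).\]
Then I would reindex the double sum by the single divisor $k:=de$ of $n$, using that $(d,e)\mapsto(d,de)$ is a bijection from $\{(d,e):d\mid n,\ e\mid n/d\}$ onto $\{(d,k):k\mid n,\ d\mid k\}$; this rewrites the right-hand side as $\sum_{k\mid n}\big(\sum_{d\mid k}\chi(d)\chi(k/d)\mu(d)\big)\mathcal K(n/k)$. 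By complete multiplicativity $\chi(d)\chi(k/d)=\chi(k)$, so the inner sum equals $\chi(k)\sum_{d\mid k}\mu(d)$, which by the defining property of the M\"obius function is $\chi(1)=1$ for $k=1$ and $0$ for $k>1$. Hence the whole expression collapses to $\mathcal K(n)$, which is exactly the claim.

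Conceptually, this is the assertion that the ``twist by $\chi$'' map $g\mapsto\chi\cdot g$ is a ring homomorphism of the commutative ring of arithmetic functions under Dirichlet convolution: the hypothesis reads $\mathcal L=(\chi\cdot\mathbf{1})*\mathcal K$, where $\mathbf{1}$ is the constant function $1$ with Dirichlet inverse $\mu$, so $\chi\cdot\mathbf{1}$ is a unit with inverse $\chi\cdot\mu$, giving $\mathcal K=(\chi\cdot\mu)*\mathcal L$. There is no genuine obstacle here --- the statement is a routine generalization of M\"obius inversion --- and the only point deserving attention is the bookkeeping in the reindexing step together with the trivial normalization $\chi(1)=1$.
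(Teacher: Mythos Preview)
Your argument is correct and is exactly the standard proof of this generalized M\"obius inversion. The paper itself does not prove Theorem~\ref{mobius}; it merely recalls it as a known generalization and then applies it, so there is no paper proof to compare against. Your caveat that one must assume $\chi\not\equiv 0$ (equivalently $\chi(1)=1$) is well taken, and your conceptual remark that the map $g\mapsto\chi\cdot g$ is a Dirichlet-ring homomorphism when $\chi$ is completely multiplicative is the cleanest way to see why the statement holds.
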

An interesting class of completely multiplicative functions is the class of {\em Dirichlet Characters} and, for instance, the {\em principal Dirichlet character modulo $d$} is the function $\chi_{d}:\mathbb N\to \mathbb N$ such that $\chi_d(n)=1$ if $\gcd(d, n)=1$ and $\chi_d(n)=0$, otherwise. We first present a direct consequence of the results contained in~Subsection~\ref{subsec:lemma-aux}. 
\begin{lemma}\label{lem:type-4-enum-1}
Let $A$ be an element of $\GL_2(\F_q)$, let $D$ be the order of $[A]$. Then, for any $m\in \N$, the $[A]$-invariants of degree $Dm>2$  are exactly the irreducible factors of degree $Dm$ of $F_{A^j, m}$, where $j$ runs through the positive integers $\le D-1$ such that $\gcd(j, D)=1$.
\end{lemma}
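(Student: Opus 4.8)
The plan is to establish the two inclusions between the set $\C_A(Dm)$ of $[A]$-invariants of degree $Dm$ and the union $\bigcup_j\mathcal R_j$, where $\mathcal R_j$ denotes the set of monic irreducible factors of degree $Dm$ of $F_{A^j, m}$ and $j$ ranges over the integers with $1\le j\le D-1$ and $\gcd(j, D)=1$. Throughout I may assume $D\ge 2$, and I will repeatedly use that $Dm>2$ forces $Dm\ge 3$, so that both Lemma~\ref{ST4.5} and Theorem~\ref{thm:ST12-4.2} apply to polynomials of degree $Dm$.

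For the inclusion $\C_A(Dm)\subseteq\bigcup_j\mathcal R_j$, I would take $f\in\C_A(Dm)$ and invoke Lemma~\ref{ST4.5}(i) to obtain the unique positive integer $\ell\le D-1$ with $\gcd(\ell,D)=1$ and $f\mid F_{A,\ell m}$. Then I would apply Lemma~\ref{power} with $r=\ell m$ and $k=m$, so that the quotient $r/k=\ell$ is coprime to $D$: picking $j\in\{1,\dots,D-1\}$ with $j\ell\equiv 1\pmod D$ (such $j$ exists and automatically satisfies $\gcd(j,D)=1$), that lemma identifies the degree-$Dm$ irreducible factors of $F_{A,\ell m}$ with those of $F_{A^j,m}$. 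Since $\deg f=Dm$ and $f\mid F_{A,\ell m}$, this gives $f\in\mathcal R_j$.

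For the reverse inclusion, I would take $f\in\mathcal R_j$ for some admissible $j$, so that $f$ is irreducible of degree $Dm\ge 3$ and divides $F_{A^j,m}$; by Theorem~\ref{thm:ST12-4.2} this means $[A^j]\circ f=f$. Because $\gcd(j,D)=1$, the element $[A^j]$ has order $D$ and generates the same cyclic subgroup of $\PGL_2(\F_q)$ as $[A]$, hence $[A]=[A^j]^t$ for some integer $t$; iterating $[A^j]\circ f=f$ via Lemma~\ref{lem:aux-mobius-action} yields $[A]\circ f=f$, i.e., $f\in\C_A(Dm)$.

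I do not expect a genuine obstacle here: the argument is essentially a repackaging of Lemmas~\ref{ST4.5} and~\ref{power} together with Theorem~\ref{thm:ST12-4.2}. The only point that needs a little care is the bookkeeping linking the index $\ell$ supplied by Lemma~\ref{ST4.5}(i) to the index $j$ required by Lemma~\ref{power}---they are inverses of each other modulo $D$---and the elementary observation that, for $\gcd(j,D)=1$, the powers $A^j$ generate the same subgroup of $\PGL_2(\F_q)$ as $A$, which is exactly what allows one to pass freely between $[A]$-invariance and $[A^j]$-invariance.
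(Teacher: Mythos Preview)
Your proof is correct and follows essentially the same route as the paper: both use Lemma~\ref{ST4.5}(i) to place an $[A]$-invariant of degree $Dm$ as a factor of some $F_{A,\ell m}$ and then Lemma~\ref{power} (with $j\ell\equiv 1\pmod D$) to pass to $F_{A^{j},m}$, noting that $\ell\mapsto j$ is a bijection on the units modulo $D$. The only cosmetic difference is that for the reverse inclusion the paper again appeals to the ``exactly'' in Lemma~\ref{power}, whereas you argue directly via Theorem~\ref{thm:ST12-4.2} and the fact that $[A^j]$ and $[A]$ generate the same cyclic group; both are fine.
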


\begin{proof}
According to Lemma~\ref{ST4.5}, the $[A]$-invariants of degree $Dm>2$ are exactly the irreducible factors of degree $Dm$ of $F_{A, \ell\cdot m}$, where $\ell$ runs through the positive integers $\le D-1$ such that $\gcd(\ell, D)=1$. Additionally, according to Lemma~\ref{power}, for each $\ell$ the following hold: if we set $j(\ell)$ as the least positive solution of $j\ell \equiv 1\pmod D$, the irreducible factors of degree $Dm$ of $F_{A, \ell\cdot m}$ are exactly the irreducible factors of degree $Dm$ of $F_{A^{j(\ell)}, m}$. Clearly $j(\ell)$ runs through the positive integers $j\le D-1$ such that $\gcd(j, D)=1$ (that is, $j(\ell)$ is a permutation of the numbers $\ell$).
\end{proof}

From now, it is sufficient to count the irreducible polynomials of degree $Dm$ that divide the polynomials $F_{A^j, m}$ for $j\le D-1$ and $\gcd(D, j)=1$. In this case, it is crucial to study the coefficients of $A^j$. When $A$ is an element of type $4$ in reduced form, we can obtain a complete description on the powers of $A$.

\begin{prop}\label{prop:powers-A}
If $A=D(c)$ is an element of type $4$, then \begin{equation}\label{eq:powers-A}A^j=\left( \begin{array}{cc}a_j&b_j\\ c_j & d_j\end{array}\right)=\delta \left( \begin{array}{cc}\alpha^{qj+1}-\alpha^{q+j} & \alpha^{q(j+1)+1}-\alpha^{q+j+1}\\
                \alpha^j-\alpha^{qj}& \alpha^{j+1}-\alpha^{q(j+1)}\end{array}\right), j\in \mathbb Z,\end{equation}
where $\alpha$ is an eigenvalue of $A$ and $\delta=(\alpha-\alpha^q)^{-1}$. In particular, if $D$ is the order of $[A]$, $c_j\ne 0$ for $1\le j\le D-1$.        
\end{prop}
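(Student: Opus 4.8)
The plan is to diagonalise $A=D(c)$ over $\F_{q^2}$ and read $A^{j}$ off the diagonalisation. Since $A$ is of type $4$, its characteristic polynomial $x^{2}-x-c$ is irreducible over $\F_q$ (Theorem~\ref{thm:types}), so it has two distinct roots $\alpha$ and $\alpha^{q}$ in $\F_{q^2}\setminus\F_q$; in particular $A$ is diagonalisable over $\F_{q^2}$, and $\det A=-c\neq 0$, so $A$ is invertible. A quick check shows that $(1,\alpha)^{T}$ and $(1,\alpha^{q})^{T}$ are eigenvectors of $A$ for the eigenvalues $\alpha$ and $\alpha^{q}$ respectively (one uses $\alpha^{2}=\alpha+c$). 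Hence, with $P=\left(\begin{matrix}1&1\\ \alpha&\alpha^{q}\end{matrix}\right)$, we have $A=P\cdot\mathrm{diag}(\alpha,\alpha^{q})\cdot P^{-1}$, and therefore
$$A^{j}=P\cdot\mathrm{diag}(\alpha^{j},\alpha^{qj})\cdot P^{-1}\qquad\text{for all }j\in\Z.$$
Expanding this product (note that $(\det P)^{-1}=(\alpha^{q}-\alpha)^{-1}=-\delta$) produces exactly formula~\eqref{eq:powers-A}. I would treat this matrix multiplication as a direct calculation and not reproduce it in full; as a consistency check, each entry is fixed by the Frobenius $x\mapsto x^{q}$ — which swaps $\alpha\leftrightarrow\alpha^{q}$, sends $\delta$ to $\delta^{q}=-\delta$, and uses $\alpha^{q^{2}}=\alpha$, the two sign changes cancelling — in agreement with $A^{j}\in\GL_2(\F_q)$.

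It remains to show that $c_{j}\neq 0$ for $1\le j\le D-1$, where $D=\ord([A])$. From the product above, the two off-diagonal entries of $A^{j}$ differ by the nonzero factor $\alpha^{q+1}=-c$, so in any case $c_{j}$ is a nonzero scalar multiple of $\delta(\alpha^{j}-\alpha^{qj})$. Since $\delta\neq 0$ and $c\neq 0$, it follows that $c_{j}=0$ precisely when $\alpha^{j}=\alpha^{qj}$, that is, when $\alpha^{j(q-1)}=1$. By the argument in the proof of item (iv) of Lemma~\ref{lem:order}, $D$ is the least positive integer with $\alpha^{D(q-1)}=1$; equivalently, $D$ is the multiplicative order of $\alpha^{q-1}$ in $\F_{q^2}^{*}$. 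Therefore $\alpha^{j(q-1)}=1$ holds precisely when $D\mid j$, and in particular $c_{j}\neq 0$ whenever $1\le j\le D-1$.

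I do not expect a real obstacle here. Once the diagonalisation is set up, the explicit formula is pure bookkeeping, and the non-vanishing of $c_{j}$ reduces, via the equivalence ``$c_{j}=0$ precisely when $\alpha^{j(q-1)}=1$'', to the description of $\ord([A])$ already obtained in Lemma~\ref{lem:order}. The only point that needs a little care is expressing $D$ as the exact multiplicative order of $\alpha^{q-1}$ (rather than of $\alpha$ itself), so that the equivalence ``$\alpha^{j(q-1)}=1$ if and only if $D\mid j$'' is valid for every integer $j$ and not merely for multiples of some divisor of $D$.
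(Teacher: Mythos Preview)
Your proposal is correct and follows essentially the same approach as the paper: diagonalise $A=D(c)$ over $\F_{q^2}$, read off $A^{j}$ from the diagonalisation, and then observe that $c_j=0$ is equivalent to $\alpha^{j}=\alpha^{qj}$, which in turn is equivalent to $D\mid j$. Your eigenvector matrix $P$ is a harmless variant of the paper's $M$, and your phrasing of the last step via the multiplicative order of $\alpha^{q-1}$ is exactly the content of the paper's appeal to $[A]^{j}=[I]$ (cf.\ the proof of Lemma~\ref{lem:order}).
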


\begin{proof}
Since $A$ is of type $4$, $A$ is a diagonalizable matrix over $\F_{q^2}$ but not over $\F_q$ and we can write
\[
A= M \left( \begin{array}{cc}\alpha&0\\ 0&\alpha^{q}\end{array}\right) M^{-1},
\quad\text{
where}
\quad
M = \left( \begin{array}{cc}\alpha^q& \alpha\\ -1& -1\end{array}\right)
\]
is an invertible matrix and $\alpha$ is an eigenvalue of $A$. From now, Eq.~\eqref{eq:powers-A} follows by direct calculations. We see that $c_j=0$ if and only if $\alpha^j=\alpha^{qj}$. The latter is equivalent to $[A]^j=[I]$ and so $j$ must be divisible by $D$. In particular, for $1\le j\le D-1$, $c_j\ne 0$. 
\end{proof}

From Lemma~\ref{ST4.5}, in general, the irreducible factors of $F_{A^j, m}$ have degree divisible by $D$. The problem relies on counting the irreducible polynomials of degree one and two. From the previous proposition, we describe the linear and quadratic irreducible factors of $F_{A^j, m}$ as follows.

\begin{lemma}\label{lem:type-4-enum-2}
Suppose that $A=D(c)$ is an element of type 4 and order $D$. For any positive integers $j$ and $m$ such that $j\le D-1$ and $\gcd(j, D)=1$, the polynomial $F_{A^j, m}\in \F_q[x]$ has degree $q^m+1$, is free of linear factors and has at most one irreducible factor of degree $2$. In addition, $F_{A^j, m}$ has an irreducible factor of degree $2$ if and only if $m$ is even and, in this case, this irreducible factor is $x^2+c^{-1}x-c^{-1}$.
\end{lemma}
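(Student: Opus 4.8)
The plan is to compute the linear and quadratic factors of $F_{A^j,m}$ directly from the explicit matrix entries given in Proposition~\ref{prop:powers-A}, using the fact that, by Theorem~\ref{thm:ST12-4.2} (or more precisely the root criterion behind Definition~\ref{def:F_Ar}), an element $\beta\in\overline{\F}_q$ with $b_j\beta-a_j\ne 0$ is a root of $F_{A^j,r}(x)=b_jx^{q^r+1}-a_jx^{q^r}+d_jx-c_j$ exactly when $\beta^{q^r}=\frac{d_j\beta-c_j}{b_j\beta-a_j}$, i.e.\ when $\Gamma_{A^j}(\beta)=\beta^{q^r}$ for the associated Möbius map $\Gamma_{A^j}$. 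First I would record that $\deg F_{A^j,m}=q^m+1$: since $1\le j\le D-1$ and $D\mid q+1$ divides $q+1$, Proposition~\ref{prop:powers-A} gives $c_j\ne 0$, and one checks from Eq.~\eqref{eq:powers-A} that $b_j=\delta(\alpha^{q(j+1)+1}-\alpha^{q+j+1})=\delta\alpha^{q+j+1}(\alpha^{(q-1)j}-1)^{-1}\cdot(\dots)$ is also nonzero for $1\le j\le D-1$ (again because $\alpha^{(q-1)j}=1$ would force $D\mid j$); hence the top coefficient $b_j$ does not vanish and the degree is $q^m+1$ as claimed.

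Next, for linear factors: a root $\beta\in\F_q$ of $F_{A^j,m}$ satisfies $\beta^{q^m}=\beta$, so the defining relation becomes $\Gamma_{A^j}(\beta)=\beta$, i.e.\ $\beta$ is a fixed point of the Möbius transformation attached to $A^j$. But $A$ is of type $4$, so its eigenvalues lie in $\F_{q^2}\setminus\F_q$ and hence so do those of $A^j$ (as $[A^j]\ne[I]$ for $1\le j\le D-1$); a matrix in $\GL_2(\F_q)$ whose Möbius map has a fixed point in $\F_q$ must have an eigenvector, hence an eigenvalue, in $\F_q$ — contradiction. (One must also dispose of the degenerate case $b_j\beta-a_j=0$, where the linear factor would be $x-c_j/d_j$ if $b_j=0$; but $b_j\ne0$ as shown, so this does not arise.) Therefore $F_{A^j,m}$ has no linear factor.

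For quadratic factors: a root $\beta$ generating $\F_{q^2}$ satisfies $\beta^{q^2}=\beta$, and one has $\beta^{q^m}=\beta$ if $m$ is even, $\beta^{q^m}=\beta^{q}$ if $m$ is odd. In the odd case the relation reads $\Gamma_{A^j}(\beta)=\beta^q$; combined with applying the Frobenius, $\beta^q$ would then satisfy a companion relation, and chasing this through forces $\beta\in\F_q$ — excluded — so there is no quadratic factor when $m$ is odd. In the even case the relation is $\Gamma_{A^j}(\beta)=\beta$, so $\beta$ is a fixed point of $\Gamma_{A^j}$ in $\F_{q^2}$; the two fixed points are the eigenvalue ratios, and since all the $A^j$ with $\gcd(j,D)=1$ are powers of $A$, these fixed points are the same two elements $\gamma,\gamma^q\in\F_{q^2}\setminus\F_q$ for every such $j$. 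Their common minimal polynomial over $\F_q$ is therefore a single fixed quadratic, and a direct computation — using $\gamma=\alpha^{q}/(-1)\big/(\alpha/(-1))$-type bookkeeping from the matrix $M$ in Proposition~\ref{prop:powers-A}, equivalently that the fixed points of $x\mapsto\frac{c}{x+1}$ (the reduced-form map $D(c)$) are the roots of $x(x+1)=c$, i.e.\ $x^2+x-c=0$ — identifies this quadratic, after normalising by the constraint built into $F_{A^j,m}$, as $x^2+c^{-1}x-c^{-1}$. Finally I would confirm that when $m$ is even this quadratic genuinely divides $F_{A^j,m}$ (not merely that it is the only candidate) by checking that its roots satisfy the displayed relation, which reduces to the identity $\gamma^{q}+\gamma=1$, $\gamma^{q+1}=-c$ already used in Section~4.

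The main obstacle is the bookkeeping in the quadratic case: one must (i) handle the possible vanishing of $b_j\beta-a_j$, (ii) rule out the $m$ odd case cleanly rather than by brute force, and (iii) pin down the exact normalised quadratic $x^2+c^{-1}x-c^{-1}$ from the entries of $A^j$ uniformly in $j$. All three are routine but fiddly; the conceptual content — "fixed points of $\Gamma_{A^j}$ are the eigenvalue ratio, which is irrational of degree $2$, independent of $j$" — is what makes the statement true, and everything else is verification, most conveniently carried out (as the paper's remark on Sage suggests) by explicit symbolic computation with the matrix of Eq.~\eqref{eq:powers-A}.
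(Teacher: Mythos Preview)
Your approach is genuinely different from the paper's and is conceptually attractive, but two of the three ``fiddly'' points you flag are places where your plan does not quite line up with what actually happens.

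\textbf{Where the approaches diverge.} The paper does not invoke M\"obius fixed points or eigenvector directions at all. Instead it extracts from Eq.~\eqref{eq:powers-A} the two relations $b_j=c\,c_j$ and $d_j-a_j=c_j$, valid for every $j$. With these in hand, the linear/even-$m$ computation $b_j\gamma^2+(d_j-a_j)\gamma-c_j=0$ becomes $c_j(c\gamma^2+\gamma-1)=0$, which immediately yields the single quadratic $x^2+c^{-1}x-c^{-1}$ (the reciprocal of the irreducible $x^2-x-c$). For $m$ odd, the paper subtracts the relation from its Frobenius image to get $(\gamma^q-\gamma)(a_j+d_j)=0$, hence $\mathrm{tr}(A^j)=0$; then Eq.~\eqref{eq:powers-A} converts this into $\alpha^{qj}=-\alpha^{j}$, i.e.\ $D\mid 2j$, contradicting $D>2$ and $\gcd(j,D)=1$.

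\textbf{Two concrete gaps in your plan.} First, the root condition $F_{A^j,m}(\beta)=0$ with $\beta^{q^m}=\beta$ gives $b_j\beta^2+(d_j-a_j)\beta-c_j=0$, which is the fixed-point equation for the M\"obius map attached to $(A^j)^T$, \emph{not} $\Gamma_{A^j}$ as you write; this is exactly why you land on $x^2+x-c$ and then need an unexplained ``normalising'' step to reach $x^2+c^{-1}x-c^{-1}$. There is no normalisation: the quadratic is simply the fixed-point polynomial of the correct (transposed) map, and it comes out directly once you use $b_j=c\,c_j$, $d_j-a_j=c_j$. Second, in the odd case your ``chasing this through forces $\beta\in\F_q$'' is not what occurs; applying Frobenius and composing only shows $\beta$ is fixed by $\Gamma_{(A^{2j})^T}$, which has fixed points in $\F_{q^2}\setminus\F_q$. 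One can complete the argument along your lines (those fixed points are also fixed by $\Gamma_{(A^j)^T}$, forcing $\beta=\beta^q$), but the paper's trace argument is shorter and avoids this detour.

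Your conceptual framing (``fixed points of the M\"obius map are the eigenvector directions, irrational of degree~2, independent of $j$'') is correct and explains \emph{why} the lemma holds; the paper's explicit relations $b_j=c\,c_j$ and $d_j-a_j=c_j$ are what make the verification clean and uniform in $j$.
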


\begin{proof}
From definition, $F_{A^j, m}=b_jx^{q^m+1}-a_jx^{q^m}+d_jx-c_j$. From Proposition~\ref{prop:powers-A}, $c_j\ne 0$ if $1\le j\le D-1$. Therefore, hence $F_{A^j, m}$ has degree $q^m+1$ if $j\le D-1$. We split the proof into cases, considering the linear and quadratic irreducible polynomials.
\begin{enumerate}
\item If $F_{A^j, m}$ has a linear factor, there exists $\gamma\in \F_q$ such that $F_{A^j, m}(\gamma)=0$. In this case, $\gamma^q=\gamma$ and a simple calculation yields $F_{A^j, m}(\gamma)=b_j\gamma^2+(d_j-a_j)\gamma-c_j$ and so $\gamma$ is a root of $p_j(x)=b_jx^2+(d_j-a_j)x-c_j$. Let $\alpha, \alpha^q$ be the eigenvalues of $A=D(c)$, hence $\alpha^q+\alpha=1$ and $\alpha^{q+1}=-c$. From Eq.~\eqref{eq:powers-A} and the previous equalities, we can easily deduce that $d_j-a_j=c_j$ and $b_j=cc_j$. Therefore, $p_j(x)$ equals $cx^2+x-1$ (up to a constant). This shows that $\gamma$ is a root of $x^2+c^{-1}x-c^{-1}$. However, since $A=D(c)$ is of type $4$, its characteristic polynomial $p(x)=x^2-x-c$ is irreducible over $\F_q$ and so is $x^2p(\frac{1}{x})=x^2+c^{-1}x-c^{-1}$. In particular, $\gamma$ cannot be an element of $\F_q$.
\item If $F_{A^j, m}$ has an irreducible factor of degree $2$, there exists $\gamma\in \F_{q^2}\setminus \F_q$ such that $F_{A^j, m}(\gamma)=0$. We observe that, in this case, $\gamma^{q^2}=\gamma$. For $m$ even, $F_{A^j, m}(\gamma)=b_j\gamma^2+(d_j-a_j)\gamma-c_j$ and in the same way as before we conclude that $x^2+c^{-1}x-c^{-1}$ is the only quadratic irreducible factor of $F_{A^j, m}$. If $m$ is odd, $\gamma^{q^m}=\gamma^q$ and equality $F_{A^j, m}(\gamma)=0$ yields $b_j\gamma^{q+1}-a_j\gamma^q+d_j\gamma-c_j=0$.
Raising the $q$-th power in the previous equality and observing that $\gamma^{q^2}=\gamma$, we obtain $$b_j\gamma^{q+1}-a_j\gamma+d_j\gamma^q-c_j=0,$$ and so $(\gamma^q-\gamma)(a_j+d_j)=0$. Since $\gamma$ is not in $\F_q$, the last equality implies that $a_j=-d_j$. However, from Eq.~\eqref{eq:powers-A}, we obtain $\alpha^{qj+1}-\alpha^{q+j}=\alpha^{q(j+1)}-\alpha^{j+1}$. Therefore, $(\alpha^q-\alpha)(\alpha^{qj}+\alpha^j)=0$. Recall that, since $A=D(c)$ is of type $4$, $\alpha$ is not in $\F_q$, i.e., $\alpha^q\ne \alpha$. Therefore $\alpha^{qj}=-\alpha^j$ and then $\alpha^{2qj}=\alpha^{2j}$. Again, from Eq.~\eqref{eq:powers-A}, this implies that $[A]^{2j}=1$, hence $2j$ is divisible by $D$. However, since $j$ and $D$ are relatively prime, it follows that $D$ divides $2$. This is a contradiction, since any element of type $4$ has order $D>2$ (see Lemma~\ref{lem:order}).
\end{enumerate}
\end{proof}

All in all, we finally add the enumeration formula for the number of $[A]$-invariants in the case when $[A]$ is of type $4$, completing the proof of Theorem~\ref{thm:chap6-main-1}.

\begin{theorem}
Suppose that $A$ is an element of type $4$ and set $D=\ord([A])$. Then $\n_A(n)=0$ if $n$ is not divisible by $D$ and, for $n=Dm$,
$$\n_A(Dm)=\frac{\varphi(D)}{Dm}\sum_{d|m\atop \gcd(d, D)=1}(q^{m/d}+\epsilon(m/d))\mu(d),$$ 
where $\epsilon(s)=(-1)^{s+1}$. 
\end{theorem}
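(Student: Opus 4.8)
The plan is to reduce the enumeration to a counting argument on the irreducible factors of the polynomials $F_{A^j,m}$ and then apply the generalized Möbius inversion (Theorem~\ref{mobius}) with the principal Dirichlet character $\chi_D$. By Theorem~\ref{thm:conjugates-main} we may assume $A=D(c)$ is in reduced form, so Propositions~\ref{prop:powers-A} and Lemma~\ref{lem:type-4-enum-2} apply verbatim. First I would fix $j$ with $1\le j\le D-1$ and $\gcd(j,D)=1$ and count, for each $r\ge 1$, the number $P_A(r)$ of monic irreducible polynomials of degree exactly $Dr$ dividing $F_{A^j,r}$. Since $\deg F_{A^j,r}=q^r+1$ (Proposition~\ref{prop:powers-A} gives $c_j\ne 0$), and since by Lemma~\ref{lem:type-4-enum-2} the only factors of $F_{A^j,r}$ of degree $<3$ are a single quadratic factor $x^2+c^{-1}x-c^{-1}$ occurring exactly when $r$ is even, while Lemma~\ref{ST4.5}(ii) forces every other irreducible factor to have degree a multiple of $D$, a degree count gives
\begin{equation*}
q^r+1=\epsilon(r)+\sum_{k\mid r} Dk\cdot \widetilde P_A(k),
\end{equation*}
where $\epsilon(r)=(-1)^{r+1}$ encodes the contribution $2$ when $r$ is even and $0$ when $r$ is odd (note $2=1-\epsilon(r)$ for $r$ even, $0=1-\epsilon(r)$ for $r$ odd, so the constant $1$ on the left combines with $\epsilon$), and $\widetilde P_A(k)$ counts the irreducible factors of $F_{A^j,r}$ of degree $Dk$. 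Here I must be careful: by Lemma~\ref{ST4.5}(ii) an irreducible factor of degree $Dk$ with $k<r$ appears only when $r=km$ with $\gcd(m,D)=1$, and by Lemma~\ref{power} these are precisely the degree-$Dk$ irreducible factors of $F_{A^{j'},k}$ for the appropriate $j'$; so $\widetilde P_A(k)$ is independent of which $F$ we look at and equals $P_A(k)$, the number of degree-$Dk$ irreducibles dividing (equivalently: which are $[A^{\bullet}]$-invariant), with the full degree-$Dr$ part being the genuinely new invariants.

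Having isolated this, the next step is to set $\mathcal L(r):=q^r+\epsilon(r)$ interpreted correctly (rewriting the displayed identity as $q^r+\epsilon(r)=\sum_{k\mid r}\chi_D(r/k)\cdot Dk\,P_A(k)$ after checking that the constraint $\gcd(r/k,D)=1$ is exactly what $\chi_D$ enforces and that the term $k=r$ always survives since $\chi_D(1)=1$), and apply Theorem~\ref{mobius} with $\chi=\chi_D$. This yields
\begin{equation*}
Dm\,P_A(m)=\sum_{d\mid m}\chi_D(d)\mu(d)\bigl(q^{m/d}+\epsilon(m/d)\bigr)
=\sum_{d\mid m,\ \gcd(d,D)=1}\mu(d)\bigl(q^{m/d}+\epsilon(m/d)\bigr).
\end{equation*}
Finally, by Lemma~\ref{lem:type-4-enum-1} the $[A]$-invariants of degree $Dm$ are exactly the degree-$Dm$ irreducible factors of $F_{A^j,m}$ as $j$ ranges over the $\varphi(D)$ residues coprime to $D$; these sets are pairwise disjoint (an irreducible $f$ divides $F_{A^j,m}$ for at most one such $j$, by the uniqueness in Lemma~\ref{ST4.5}(i) transported through Lemma~\ref{power}), so $\n_A(Dm)=\varphi(D)\,P_A(m)$, giving the stated formula after dividing by $Dm$. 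The vanishing $\n_A(n)=0$ for $D\nmid n$ is immediate from Theorem~\ref{thm:ST12-3.3} together with the fact that elements of type $4$ have order $D>2$, so no quadratic exception arises.

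The main obstacle I anticipate is the bookkeeping in the first step: correctly accounting for which lower-degree factors of $F_{A^j,r}$ contribute and showing the recursion closes, i.e., that $\widetilde P_A(k)=P_A(k)$ for all $k\mid r$, $k<r$. This rests on combining Lemma~\ref{ST4.5}(ii) (degrees of factors) with Lemma~\ref{power} (identifying lower-degree factors across different powers of $A$) and on verifying that the constraint "$r=km$ with $\gcd(m,D)=1$" dovetails exactly with the support of $\chi_D$; once that is pinned down, the inversion and the disjointness/counting over $j$ are routine. The secondary subtlety is checking that $\epsilon(r)=(-1)^{r+1}$ is genuinely the correct "defect" term — this is precisely the content of Lemma~\ref{lem:type-4-enum-2}, which guarantees no linear factors ever and exactly one quadratic factor iff $r$ is even — so that the completely multiplicative machinery of Theorem~\ref{mobius} applies without a spurious extra constant, unlike the type~$3$ case where the constant $c_A=-1$ sits outside the Möbius sum.
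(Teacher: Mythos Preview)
Your proposal is correct and follows essentially the same route as the paper: reduce to $A=D(c)$ via Theorem~\ref{thm:conjugates-main}, use Lemma~\ref{lem:type-4-enum-1} to express $\n_A(Dm)$ as a sum over the $\varphi(D)$ values of $j$, compute the degree of $F_{A^j,m}$ and its low-degree defect via Lemma~\ref{lem:type-4-enum-2}, set up the identity $q^m+\epsilon(m)=\sum_{k\mid m}\chi_D(m/k)\cdot Dk\cdot\mathcal L(k)$, and invert with Theorem~\ref{mobius}. You are in fact more explicit than the paper on two points it leaves implicit: that the count of degree-$Dk$ factors of $F_{A^j,r}$ is independent of $j$ (your ``$\widetilde P_A(k)=P_A(k)$'', which closes the recursion and is needed for the inversion to make sense as a function of $k$ alone), and the disjointness over $j$ coming from the uniqueness in Lemma~\ref{ST4.5}(i).
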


\begin{proof}
From Theorem~\ref{thm:conjugates-main}, we can suppose that $A$ is in the reduced form, i.e., $A=D(c)$ for some $c$ such that $x^2-x-c$ is irreducible over $\F_q$.  For each positive integer $j$ such that $j\le D-1$ and $\gcd(j, D)=1$, let $n(j)$ be the number of irreducible factors of degree $Dm$ of $F_{A^j, m}$. From Lemma~\ref{lem:type-4-enum-1}, it follows that $$N_A(Dm)=\sum_{j\le D-1\atop \gcd(j, D)=1}n(j).$$
Fix $j$ such that $j\le D-1$ and $\gcd(j, D)=1$. According to Lemma~\ref{ST4.5}, the irreducible factors of $F_{A^j, m}$ are of degree $Dm$, of degree $Dk$, where $k$ divides $m$ and $\gcd(\frac{m}{k}, D)=1$ and of degree at most $2$. For each divisor $k$ of $m$ such that $\gcd(\frac{m}{k}, D)=1$, let $P_{k, m}$ be the product of all irreducible factors of degree $Dk$ of $F_{A^j, m}$ and let $\mathcal L(k)$ be the number of such irreducible factors. Also, set $$\varepsilon_m(x)=\gcd(F_{A^j, m}(x), x^2+c^{-1}x-c^{-1}).$$ Therefore, from Lemma~\ref{lem:type-4-enum-2}, we obtain the following identity
$$\frac{F_{A^j, m}}{\varepsilon_m(x)}=\prod_{k|m\atop \gcd(\frac{m}{k}, D)=1}P_{k, m}.$$
From Lemma~\ref{lem:type-4-enum-2}, $F_{A^j, m}$ has degree $q^m+1$ and the degree of $\varepsilon_m(x)$ is either $0$ or $2$, according to whether $m$ is odd or even. In particular, if we set $\epsilon(m)=(-1)^{m+1}$, taking degrees on the last equality we obtain:
$$q^m+1-\deg(\varepsilon_m(x))=q^m+\epsilon(m)=\sum_{k|m\atop \gcd(\frac{m}{k}, D)=1}\mathcal L (k)\cdot (kD)=\sum_{k|m}\mathcal L(k)\cdot (kD)\cdot \chi_{D}\left(\frac{m}{k}\right),$$
where $\chi_D$ is the {\em principal Dirichlet character modulo $D$}.
From Theorem~\ref{mobius}, we obtain
$$\mathcal L(k)\cdot kD=\sum_{d|k}(q^{k/d}+\epsilon(k/d))\cdot \mu(d)\cdot \chi_D(d)$$
for any $k\in \mathbb N$.
Hence $$\mathcal L(m)=\frac{1}{Dm}\sum_{d|m}(q^{m/d}+\epsilon(m/d))\cdot \mu(d)\cdot \chi_D(d)=\frac{1}{Dm}\sum_{d|m\atop \gcd(d, D)=1}(q^{m/d}+\epsilon(m/d))\mu(d).$$
From definition, $n(j)=\mathcal L(m)$ and so 
$$\n_A(Dm)=\sum_{j\le D-1\atop \gcd(j, D)=1}n(j)=\varphi(D)\cdot \mathcal L(m)=\frac{\varphi(D)}{Dm}\sum_{d|m\atop \gcd(d, D)=1}(q^{m/d}+\epsilon(m/d))\mu(d).$$
\end{proof}

\section{Conclusion}
In this paper, we have provided a short survey on a special action of $\PGL_2(\F_q)$ on irreducible polynomials over $\F_q$, establishing main results on the invariant theory of this group action: the characterization and number of fixed elements. We have noticed that a full characterization of the elements fixed by cyclic subgroups $\PGL_2(\F_q)$ was given in \cite{ST12}. We have presented some recent results, regarding enumeration formulas for the number of invariants and a natural correspondence between quadratic transformations and polynomials that are fixed by involutions $[A]\in \PGL_2(\F_q)$. Our contributions to the study of this action include explicit formulas for the number of fixed elements (in full generality), the characterization of the set of non-trivial invariants for any noncyclic subgroup of $\PGL_2(\F_q)$ and also a general correspondence between the polynomials fixed by an arbitrary element of $\PGL_2(\F_q)$ and rational transformations.

\begin{center}{\bf Acknowledgments}\end{center}
This work was conducted during a visit to Carleton University, supported by the Program CAPES-PDSE (process - 88881.134747/2016-01).

\end{document}